
\documentclass{daj}

\dajAUTHORdetails{%
  title = {Uniform Finite Presentation for Groups of Polynomial Growth}, 
  author = {Philip Easo and Tom Hutchcroft},
  plaintextauthor = {Philip Easo, Tom Hutchcroft},
    %
    %
    %
    %
    %
   %
}   

\dajEDITORdetails{%
   year={2025},
   number={1},
   received={20 September 2023},   
   published={30 April 2025},  
   doi={10.19086/da.127778},       
}   

\usepackage{amsmath,amssymb,amsfonts,amsthm}
\usepackage{color}

\usepackage{cleveref}

\usepackage{caption}
\usepackage{thmtools}

\usepackage{mathrsfs}

\crefname{theorem}{Theorem}{Theorems}
\crefname{thm}{Theorem}{Theorems}
\crefname{lemma}{Lemma}{Lemmas}
\crefname{claim}{Claim}{Claims}
\crefname{lem}{Lemma}{Lemmas}
\crefname{remark}{Remark}{Remarks}
\crefname{prop}{Proposition}{Propositions}
\crefname{proposition}{Proposition}{Propositions}
\crefname{defn}{Definition}{Definitions}
\crefname{definition}{Definition}{Definitions}
\crefname{corollary}{Corollary}{Corollaries}
\crefname{conjecture}{Conjecture}{Conjectures}
\crefname{question}{Question}{Questions}
\crefname{chapter}{Chapter}{Chapters}
\crefname{section}{Section}{Sections}
\crefname{part}{Part}{Parts}
\crefname{figure}{Figure}{Figures}

\theoremstyle{plain}
\newtheorem{thm}{Theorem}[section]

\newtheorem{lemma}[thm]{Lemma}

\newtheorem{lem}[thm]{Lemma}

\newtheorem{corollary}[thm]{Corollary}

\newtheorem{prop}[thm]{Proposition}
\newtheorem{proposition}[thm]{Proposition}

\theoremstyle{definition}

\newtheorem{example}[thm]{Example}

\theoremstyle{remark}
\newtheorem{remark}{Remark}[section]
\newtheorem*{remark*}{Remark}

\numberwithin{equation}{section}



\newcommand{\R}{\mathbb R}
\newcommand{\Z}{\mathbb Z}







\usepackage{extarrows}
\usepackage{commath}
\usepackage{mathtools}
\newcommand{\eps}{\varepsilon}
\usepackage{bbm}
\usepackage{setspace}
\usepackage{enumitem}
\usepackage{cite}

\usepackage{tikz}






\newcommand{\Vol}{\operatorname{vol}}

\numberwithin{equation}{section}


\usepackage{mathrsfs}

\newcommand{\Cay}{\operatorname{Cay}}

\newcommand{\Gr}{\operatorname{Gr}}

\newcommand{\spanZ}{\operatorname{span}_\Z}

\makeatletter
\newcommand{\xRrightarrow}[2][]{\ext@arrow 0359\Rrightarrowfill@{#1}{#2}}
\newcommand{\Rrightarrowfill@}{\arrowfill@\equiv\equiv\Rrightarrow}
\newcommand{\xLleftarrow}[2][]{\ext@arrow 3095\Lleftarrowfill@{#1}{#2}}
\newcommand{\Lleftarrowfill@}{\arrowfill@\Lleftarrow\equiv\equiv}
\newcommand{\xLleftRrightarrow}[2][]{\ext@arrow 3399\LleftRrightarrowfill@{#1}{#2}}
\newcommand{\LleftRrightarrowfill@}{\arrowfill@\Lleftarrow\equiv\Rrightarrow}
\makeatother

\DeclarePairedDelimiter\pnorm{(\!(}{)\!)}

\makeatletter
\DeclareFontFamily{OMX}{MnSymbolE}{}
\DeclareSymbolFont{MnLargeSymbols}{OMX}{MnSymbolE}{m}{n}
\SetSymbolFont{MnLargeSymbols}{bold}{OMX}{MnSymbolE}{b}{n}
\DeclareFontShape{OMX}{MnSymbolE}{m}{n}{
    <-6>  MnSymbolE5
   <6-7>  MnSymbolE6
   <7-8>  MnSymbolE7
   <8-9>  MnSymbolE8
   <9-10> MnSymbolE9
  <10-12> MnSymbolE10
  <12->   MnSymbolE12
}{}
\DeclareFontShape{OMX}{MnSymbolE}{b}{n}{
    <-6>  MnSymbolE-Bold5
   <6-7>  MnSymbolE-Bold6
   <7-8>  MnSymbolE-Bold7
   <8-9>  MnSymbolE-Bold8
   <9-10> MnSymbolE-Bold9
  <10-12> MnSymbolE-Bold10
  <12->   MnSymbolE-Bold12
}{}

\let\llangle\@undefined
\let\rrangle\@undefined
\DeclareMathDelimiter{\llangle}{\mathopen}%
                     {MnLargeSymbols}{'164}{MnLargeSymbols}{'164}
\DeclareMathDelimiter{\rrangle}{\mathclose}%
                     {MnLargeSymbols}{'171}{MnLargeSymbols}{'171}
\makeatother




\begin{document}

\begin{frontmatter}[classification=text]


\author[philip]{Philip Easo}
\author[tom]{Tom Hutchcroft\thanks{Supported by NSF grant
DMS-2246494.}}

\begin{abstract}
We prove a quantitative refinement of the statement that groups of polynomial growth are finitely presented.
Let $G$ be a group with finite generating set $S$ and let $\operatorname{Gr}(r)$ be the volume of the ball of radius $r$ in the associated Cayley graph. For each $k \geq 0$, let $R_k$ be the set of words of length at most $2^k$ in the free group $F_S$ that are equal to the identity in $G$, and let $\llangle R_k \rrangle$ be the normal subgroup of $F_S$ generated by $R_k$, so that the quotient map $F_S/\llangle R_k\rrangle \to G$ induces a covering map of the associated Cayley graphs that has injectivity radius at least $2^{k-1}-1$. Given a non-negative integer $k$, we say that $(G,S)$ has a \textbf{new relation on scale k} if $\llangle R_{k+1} \rrangle \neq \llangle R_{k} \rrangle$. We prove that for each $K<\infty$ there exist constants $n_0$ and $C$ depending only on $K$ and $|S|$ such that if $\Gr(3n)\leq K \Gr(n)$ for some $n\geq n_0$, then there exist at most $C$ scales $k\geq \log_2 (n)$ on which $G$ has a new relation. We apply this result in another paper as part of our proof of Schramm's locality conjecture in percolation theory.
\end{abstract}
\end{frontmatter}


\section{Introduction}

It is a seminal theorem of Gromov \cite{gromov81poly} (see also \cite{MR2629989,ozawa2018functional}) that a finitely generated group has polynomial volume growth if and only if it is virtually nilpotent. This theorem and its extension to transitive graphs due to Trofimov \cite{MR811571} are of foundational importance in the study of geometry and probability on transitive graphs, implying in particular that every transitive graph of polynomial growth has a well-defined volume growth dimension and that this dimension is an integer. In probability, these theorems are often used together with the isoperimetric inequality of Coulhon and Saloff-Coste \cite{MR1232845} to prove results for general transitive graphs via a ``structure vs.\ expansion'' dichotomy: that is, proceeding by a case analysis according to whether the graph is virtually nilpotent or satisfies a $d$-dimensional isoperimetric inequality for every $d<\infty$. Important results in probability employing the structure theory of transitive graphs in this way include Varopoulos's theorem \cite{MR832044} that an infinite transitive graph is recurrent for simple random walks if and only if it has linear or quadratic volume growth, and Duminil-Copin, Goswami, Severo, Raoufi, and Yadin's proof that transitive graphs admit a percolation phase transition if and only if they have superlinear growth \cite{1806.07733}.

\medskip

Over the last twenty years, an extensive literature in \emph{approximate group theory} has been developed establishing \emph{finitary} versions of Gromov's theorem and Trofimov's theorem, highlights of which include \cite{bgt12,shalom-tao,MR2833482,tt.Trof,MR2827010}. See \cite{breuillard2014brief} for a detailed overview, \cite{MR3971253} for a textbook introduction, and \cite{tointon2020brief} for a concise survey.
For groups, this theory culminated in the celebrated work of Breuillard, Green, and Tao \cite{bgt12}, a special case of whose results can be stated\footnote{We state their theorem in a `metric' form that is convenient for our applications, and which is adapted from Tessera and Tointon's structure theorem for \emph{vertex-transitive graphs} of polynomial growth \cite[Theorem 2.3]{tt.Trof}. Indeed, the statement given below is equivalent to the special case of their theorem in which the graph $\Gamma$ is the Cayley graph of $G$, together with the growth bound of \cite[Corollary 11.9]{bgt12}. 
} as follows. Given a group $G$ and a finite generating set $S$, we write $\Gr(r)=\Gr_{G,S}(r)$ for the cardinality of the ball of radius $r$ in the Cayley graph $\Cay(G,S)$.

\begin{thm}[Breuillard, Green, and Tao 2012]
\label{thm:BGT_metric}
For each $K\geq 1$ there exist constants $r_0=r_0(K)$ and $C=C(K)$ such that the following holds. Let $G$ be a group with finite generating set $S$, and suppose that there exists $r \geq r_0$ such that $\Gr(3r) \leq K\Gr(r)$. Then $\Gr(mr) \leq m^C \Gr(r)$ for every $m\geq 3$ and there exists a finite normal subgroup $Q \triangleleft G$ such that:

\begin{enumerate}
    \item Every fibre of the projection $\pi:G \rightarrow G/Q$ has diameter at most $Cr$.
    
    
    \item $G/Q$ has a nilpotent normal subgroup $N$ of rank, step and index at most $C$.
    
    
    
    \item The projection $x\mapsto \pi(x)$ is a $(1,Cr)$-quasi-isometry from $\operatorname{Cay}(G,S)$ to $\operatorname{Cay}(G/Q,\pi(S))$.
\end{enumerate}
\end{thm}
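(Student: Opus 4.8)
We do not establish \Cref{thm:BGT_metric} from first principles; as the footnote indicates, it is a repackaging of results already in the literature, and the plan is to assemble it from two ingredients: the finitary structure theorem of Tessera and Tointon for vertex-transitive graphs of polynomial growth \cite[Theorem 2.3]{tt.Trof}, and the polynomial growth estimate of Breuillard, Green and Tao \cite[Corollary 11.9]{bgt12}.

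The first step is to apply \cite[Theorem 2.3]{tt.Trof} to the Cayley graph $\Cay(G,S)$. Their hypothesis is a doubling bound at a single scale, of the shape $\Gr(\lambda r)\leq K\Gr(r)$ for a suitable absolute $\lambda$ and all $r$ above a threshold, and our hypothesis $\Gr(3r)\leq K\Gr(r)$ is of exactly this form, up to harmlessly adjusting $\lambda$, replacing $K$ by a fixed power of itself, and enlarging $r_0$. Specialised to the Cayley graph of $G$, their conclusion produces a finite normal subgroup $Q\triangleleft G$ whose fibres under $\pi\colon G\to G/Q$ have diameter $O_K(r)$, a nilpotent normal subgroup $N\triangleleft G/Q$ of rank, step and index $O_K(1)$, and the statement that $\pi$ is a $(1,O_K(r))$-quasi-isometry from $\Cay(G,S)$ onto $\Cay(G/Q,\pi(S))$ --- that is, items (1)--(3). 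The one point that needs care is the translation back from graphs to groups: Tessera and Tointon extract a finite normal subgroup of the automorphism group of the graph, and one should check that in the Cayley case this can be taken normal in $G$ itself rather than merely in a finite-index subgroup, and that the implied constants depend only on $K$ and $|S|$.

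The growth bound $\Gr(mr)\leq m^C\Gr(r)$ for $m\geq 3$ is then supplied directly by \cite[Corollary 11.9]{bgt12}, whose hypothesis is again a single-scale doubling bound and whose conclusion is polynomial growth at all larger scales with exponent and constant depending only on $K$. It can alternatively be read off from items (1)--(3): $G/Q$ is virtually nilpotent of bounded rank and step, so has polynomial growth of degree $O_K(1)$ with an $O_K(1)$ leading constant, and passing back up along the finite fibres of $\pi$ changes the count by at most the factor $|Q|=O_K(1)$.

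The genuinely hard content underlying all of this is the Breuillard--Green--Tao classification of approximate groups \cite{bgt12}. The route is: the bound $\Gr(3r)\leq K\Gr(r)$ says the ball $B_S(r)$ has $K$-bounded tripling, so by a Ruzsa-type covering argument it is contained in a $K^{O(1)}$-approximate group $H$ with $|H|\leq K^{O(1)}|B_S(r)|$; the classification then controls $H$ by a coset nilprogression of rank, step and complexity $O_K(1)$; and finally one unwinds this algebraic description into the metric statements --- the finite ``coset part'' of the nilprogression becoming $Q$, the progression generators generating $N$, and a comparison of the word metric on $G$ with the metric coming from the nilprogression giving the quasi-isometry and the growth bound. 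The classification itself proceeds by passing to an ultralimit of the approximate groups, invoking the Gleason--Yamabe structure theory of locally compact groups to locate a Lie quotient of the limit, and inducting on its dimension; this is the step I expect to be the real obstacle, while everything downstream of it --- including the deduction of \Cref{thm:BGT_metric} --- is essentially bookkeeping, modulo the normality and uniformity checks flagged above.
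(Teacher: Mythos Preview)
Your proposal is correct and matches the paper's own treatment: the paper does not prove \Cref{thm:BGT_metric} at all but merely states it, with the footnote explaining that it is the Cayley-graph special case of \cite[Theorem 2.3]{tt.Trof} together with the growth bound \cite[Corollary 11.9]{bgt12}. Your write-up is a faithful (and more detailed) unpacking of exactly that footnote; the only minor discrepancy is that the theorem claims $C=C(K)$ with no dependence on $|S|$, so you should drop the ``and $|S|$'' from your uniformity remark.
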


Here, we recall that a function $\phi:V_1\to V_2$ between the vertex sets of two graphs $G_1=(V_1,E_1)$ and $G_2=(V_2,E_2)$ is said to be an $(\alpha,\beta)$\textbf{-quasi-isometry} (a.k.a.\ \textbf{rough isometry}) if 
\[\alpha^{-1} d(x,y) -\beta \leq d(\phi(x),\phi(y)) \leq \alpha d(x,y) +\beta\]
for every $x,y\in V_1$, and every vertex $z\in V_2$ is within distance at most $\beta$ of $\phi(V_1)$. (The second property holds automatically if $\phi$ is surjective.)

\medskip

Informally, the Breuillard-Green-Tao theorem states that polynomial growth at one sufficiently large scale forces the group to have polynomial growth at every subsequent scale, and moreover to be metrically ``well-modelled'' by a nilpotent group at all larger scales. Similar theorems for vertex-transitive graphs that are not necessarily Cayley graphs have recently been established in the work of Tessera and Tointon \cite{tt.Trof,MR3877012}.

\medskip

 These results have recently found many probabilistic applications, particularly for problems concerning \emph{families} of transitive graphs (such as sequences of finite transitive graphs converging to an infinite graph); such problems often require estimates that are ``uniform in the graph'', so that  structure theoretic results invoked in their solutions must typically be finitary. Results proven using finitary structure theory include a finite-graph version of Varopoulos's theorem \cite{tt.resist}, universality theorems for cover time fluctuations \cite{berestycki2022universality}, locality of the critical probability for graphs of polynomial growth \cite{contreras2023locality}, non-triviality of the supercritical phase for percolation on finite transitive graphs \cite{hutchcroft2021non}, and ``gap at $1$'' theorems for the critical probability on infinite vertex transitive graphs \cite{hutchcroft2021non,lyons2023explicit,panagiotis2021gap}. Several of these works exploit finitary versions of the ``structure vs.\ expansion'' dichotomy provided by the finitary structure theory of \cite{bgt12,tt.Trof}, with key technical difficulties arising from the fact that the same graph may exhibit different sides of this dichotomy at different scales.

\subsection{Uniform finite presentation}

Since virtually nilpotent groups are finitely presented, it is a consequence of Gromov's theorem that every group of polynomial volume growth is finitely presented. The purpose of this paper is to prove a \emph{uniform} version of this fact, stating roughly that every group of polynomial growth has a bounded number of scales witnessing a new relation after the first scale that polynomial growth is witnessed. This result is used in our work \cite{Locality} as part of our proof  of Schramm's locality conjecture for Bernoulli bond percolation \cite{MR2773031}, where it plays an important part in our ``uniformization'' of the methods of Contreras, Martineau, and Tassion \cite{contreras2021supercritical}.  A comparison of our results with  the previous literature is given at the end of this section. 

\medskip

Let us now state our result formally.
Let $G$ be a group with finite generating set $S$, so that $G\cong F_S / R$ for some normal subgroup $R$ of $F_S$. For each $n \geq 0$, let $R_n$ be the set of words of length at most $2^n$ in the free group $F_S$ that are equal to the identity in $G$, and let $\llangle R_n \rrangle$ be the normal subgroup of $F_S$ generated by $R_n$, so that the quotient map $F_S/\llangle R_n\rrangle \to G$ induces a covering map of the associated Cayley graphs that has injectivity radius at least $2^{n-1}-1$ (see \cref{lem:small_presentation_means_balls_look_similar}). We say that $(G,S)$ has a \textbf{new relation on scale n} if $\llangle R_{n+1} \rrangle \neq \llangle R_{n} \rrangle$. A finitely generated group $G$ is finitely presented if and only if it has a new relation on at most finitely many scales, so that the following theorem can indeed be thought of as stating that groups of polynomial growth are ``uniformly finitely presented".

\begin{thm}
\label{thm:main}
For each $K,k<\infty$ there exist constants $r_0=r_0(K)$ and $C=C(K,k)$ such that if $G$ is a group and $S$ is a finite generating set for $G$ with $|S|\leq k$ whose growth function $\operatorname{Gr}$ satisfies $\operatorname{Gr}(3r)\leq K \operatorname{Gr}(r)$ for some integer $r\geq r_0$ then
\[
\#\Bigl\{n\in \mathbb{N} : n\geq \log_2(r) \text{ and $(G,S)$ has a new relation on scale $n$} \Bigr\}\leq C.
\]
\end{thm}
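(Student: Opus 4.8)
The plan is to first invoke the Breuillard--Green--Tao structure theorem (\cref{thm:BGT_metric}) to reduce to the case where $G$ is virtually nilpotent of complexity at most $C(K)$, and then to show that such a group admits a presentation over $S$ whose relators occupy only boundedly many ``length scales''. It is convenient to work with the covering groups $\tilde G_n := F_S/\llangle R_n\rrangle$: by \cref{lem:small_presentation_means_balls_look_similar} the natural map $\tilde G_n \to G$ is a covering of Cayley graphs with injectivity radius at least $2^{n-1}-1$, and $(G,S)$ has a new relation on scale $n$ precisely when $\tilde G_n \to \tilde G_{n+1}$ is a proper quotient. A first observation is that once $n \geq \log_2 r + O(1)$ the ball of radius $3r$ in $\operatorname{Cay}(\tilde G_n,S)$ agrees with that of $\operatorname{Cay}(G,S)$, so $\tilde G_n$ also satisfies the hypothesis of \cref{thm:BGT_metric} at scale $r$ and is therefore itself virtually nilpotent of complexity at most $C(K)$. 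Thus we are looking at a chain of surjections $\tilde G_{N} \twoheadrightarrow \tilde G_{N+1}\twoheadrightarrow\cdots\twoheadrightarrow G$ between uniformly-bounded-complexity virtually nilpotent groups, each agreeing with $G$ on a ball of radius comparable to $2^n$, and we must bound the number of proper maps by a constant depending only on $K$ and $|S|$.

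I would attack this in two halves. First, the Hirsch length $h(\cdot)$ is non-increasing along the chain and takes values in $\{0,1,\dots,C'(K)\}$, so at most $C'(K)$ of the maps decrease it. For a map $\tilde G_n\to\tilde G_{n+1}$ that preserves the Hirsch length the kernel is a nontrivial \emph{finite} normal subgroup $F_n\triangleleft \tilde G_n$, and the injectivity-radius bound forces every nontrivial element of $F_n$ to have length at least $2^{n}-2$ in $\operatorname{Cay}(\tilde G_n,S)$. Applying \cref{thm:BGT_metric} to $\tilde G_n$, its finite radical $Q_n$ has diameter at most $Cr$; hence once $n$ is past a bounded window around $\log_2 r$ the subgroup $F_n$ cannot meet $Q_n$, so it embeds into $\tilde G_n/Q_n$ and lands inside the torsion of a nilpotent subgroup of rank at most $C$, whence $F_n$ has rank at most $C''(K)$. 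Combined with the facts that $|\ker(\tilde G_n\to G)|$ is non-increasing in this deep regime and that the chain is ``efficient'' --- it imposes all short relations at once and so never re-kills what is already killed, so a cyclic torsion factor of order with many prime divisors is resolved in essentially one step rather than one per prime --- one can bound the number of remaining proper maps in terms of $C''(K)$ and $|S|$.

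An equivalent and perhaps cleaner route is to build the presentation directly. Using the extension $1\to Q\to G\to G/Q\to 1$ from \cref{thm:BGT_metric}, present the torsion-free part of the nilpotent subgroup $N\le G/Q$ by a polycyclic presentation with $O_K(1)$ relators (each occupying a single length scale, though possibly a large one coming from a torsion exponent or a structure constant), and present each finite piece --- the radical $Q$ and the finite torsion subgroup of $N$ --- by its multiplication table: since $Q$ has diameter at most $Cr$ in $\operatorname{Cay}(G,S)$, \emph{all} of its table relators have length at most $3Cr$ and so occupy only $O_K(1)$ length scales above $\log_2 r$, no matter how large $|Q|$ (and hence the number of relators) is, and the finite torsion of $N$ is handled the same way. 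Together with the $O_{K,|S|}(1)$ relators encoding the conjugation action and the extension cocycle, this yields a presentation $\langle S\mid w_1,\dots,w_t\rangle$ of $(G,S)$ in which $t$ may be enormous but the set $\{\lceil\log_2|w_i|\rceil\}$ has size $O_{K,|S|}(1)$.

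The crux, in either formulation, is to show that the normal closures $\llangle R_n\rrangle$ can only grow when $n$ reaches one of these boundedly many length scales --- concretely, that $\llangle R_n\rrangle = \llangle\{w_i : |w_i|\le 2^n\}\rrangle$ for all $n$, i.e.\ that every word of length at most $2^n$ that is trivial in $G$ is already a consequence of the relators of length at most $2^n$ (equivalently, that $\tilde G_n$ and $G$ have no ``hidden'' short discrepancy). This is false for arbitrary presentations, so it must be extracted from the specific structure above: it is a tameness of our presentation with respect to the filtration of $F_S$ by word length, and controlling it in the presence of the possibly exponential Dehn-type distortion of nilpotent groups, while simultaneously keeping track of the torsion resolution, is where I expect the real work to lie. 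Granting this lemma, the number of new relation scales $n\ge\log_2 r$ is at most the number of length scales occupied by relators of length $>r$, which is $O_{K,|S|}(1)$, completing the proof with $r_0=r_0(K)$ inherited from \cref{thm:BGT_metric}.
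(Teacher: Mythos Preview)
Your proposal identifies the right difficulty but does not resolve it. In your first approach, the bound on the rank of each finite kernel $F_n$ does not by itself bound the \emph{number} of non-trivial $F_n$'s: once the Hirsch length stabilises at some $n_1$, the total kernel $|\ker(\tilde G_{n_1}\to G)|$ can still be arbitrarily large, so the naive bound $\log_2|\ker(\tilde G_{n_1}\to G)|$ is useless. Your ``efficiency'' remark---that the chain imposes all short relations at once---is exactly the point that needs proving, and you have not proved it. In your second approach you isolate this same point as the ``crux'' lemma $\llangle R_n\rrangle = \llangle\{w_i:|w_i|\le 2^n\}\rrangle$ and explicitly grant it without proof; as you note, this fails for arbitrary presentations, and extracting it from the specific structure here is not a triviality (it is essentially a Dehn-function-type statement uniform over all virtually nilpotent groups of bounded complexity). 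So both routes stall at the same unproven step.

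The paper sidesteps this entirely by a different reduction. Rather than tracking a chain of varying groups $\tilde G_n$ or constructing a tame presentation, it fixes a single group $G':=F_S/\llangle R_{n_0}\rrangle$ for $n_0\approx\log_2 r$ (which inherits the growth hypothesis via \cref{lem:small_presentation_means_balls_look_similar}) and applies the more general \cref{thm:main_generalized} to the subgroup $H\leq G'$ generated by the images of all relations: the subgroups $H_n:=\langle R_n/\llangle R_{n_0}\rrangle\rangle$ change only when $\llangle R_n\rrangle$ does, so bounding $\#\{H_n\}$ suffices. The substance is therefore shifted to \cref{thm:main_generalized}, whose proof goes: BGT reduces to a nilpotent group of bounded step; the universal property of the free step-$s$ nilpotent group lets one work in a fixed Carnot group (\cref{prop:main_nilpotent}); Pansu's theorem replaces word-metric balls by dilates of a single convex set in the Lie algebra; the harmonious-lattice machinery of \cref{sec:nilpotent_lattices} (\cref{thm:harmonious}, \cref{cor:harmonious_bounded_index}) reduces the multiplicative subgroup exploration to an additive one; and finally Minkowski's second theorem (\cref{thm:Abelian_convex}) bounds the number of times a discrete subgroup of $\R^d$ can change when explored by an increasing family of symmetric convex sets. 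None of this appears in your sketch, and the geometric input from Minkowski's theorem is what ultimately replaces your unproven efficiency/tameness claims.
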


\begin{remark}
Considering the abelian group $\prod_{i=1}^k(\Z/n_i \Z)$ with its standard generating set, where $n_1,\ldots,n_k$ are arbitrary, we see that it is not possible to control on \emph{which} scales we find a new relation; we only claim that the total \emph{number} of scales on which we find a new relation is bounded.
\end{remark}

We will prove the following theorem about the number of times we find an ``unexpected element'' during a breadth-first exploration of a (not necessarily normal) subgroup of a group of polynomial growth; we will see in \cref{sec:wrap_up} that this theorem easily implies \cref{thm:main}.

\begin{thm}[Breadth-first exploration of subgroups]
\label{thm:main_generalized}
For each $K$ and $k$ there exist constants $r_0=r_0(K)$ and $C=C(K,k)$ such that the following holds.
Let $G$ be a group with finite generating set $S$ satisfying $|S|\leq k$, let $H$ be a subgroup of $G$, and for each $n\geq 1$ let $H_n$ be the subgroup of $H$ generated by elements that have word length at most $2^n$ in $(G,S)$. If $r\geq r_0$ is such that $\Gr(3r)\leq K \Gr(r)$ then 
\[
\#\{n \geq \log_2 r :H_{n+1}\neq H_n\} \leq C.
\]
\end{thm}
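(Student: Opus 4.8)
The plan is to feed the hypothesis into the quantitative structure theorem \cref{thm:BGT_metric}, reduce to the case in which the ambient group is virtually nilpotent of bounded complexity, and then analyse the subgroup chain there by a uniform ``geometry of numbers'' argument. First apply \cref{thm:BGT_metric} at scale $r$ (with $r_0=r_0(K)$): this produces a constant $C_0=C_0(K)$, a finite normal subgroup $Q\triangleleft G$ all of whose elements have $S$-length at most $C_0 r$, and a nilpotent normal subgroup $N$ of $\bar G:=G/Q$ of rank, step and index at most $C_0$, with the projection $\pi\colon G\to\bar G$ a $(1,C_0 r)$-quasi-isometry. Because $N$ is nilpotent of bounded rank and step, $\bar G$ has polynomial volume growth of some degree $D=D(C_0)$ \emph{at every scale}, with $\Gr_{\bar G,\pi(S)}(\rho)\le C_1\rho^{D}$ for all $\rho\ge 1$ and some $C_1=C_1(K,k)$; this scale-free bound, rather than the weaker one-scale bound given directly by \cref{thm:BGT_metric}, is what powers the rest, and it is here that the dependence on the number $k\ge|\pi(S)|$ of generators will enter. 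Since $Q\subseteq B_{G,S}(e,C_0 r)$ we have $H\cap Q\subseteq H_m$ once $2^m\ge C_0 r$, and a short argument then shows that for all $n$ above some scale $n_1=\log_2 r+O_K(1)$, a strict inclusion $H_{n+1}\ne H_n$ forces $\pi(H_{n+1})\ne\pi(H_n)$ (otherwise a new element would differ from an element of $H_n$ by an element of $H_{n+1}\cap Q\subseteq H_n$). The $O_K(1)$ scales below $n_1$ contribute boundedly many new relations, so it suffices to bound the number of $n$ with $\pi(H_{n+1})\ne\pi(H_n)$.

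Next I would pass to the \emph{intrinsic} chain of the subgroup $\bar H:=\pi(H)\le\bar G$: set $\tilde H_m:=\langle \bar H\cap B_{\bar G,\pi(S)}(e,2^m)\rangle$. Since $\pi$ is $1$-Lipschitz, $\pi(H_m)\subseteq\tilde H_m$; conversely, any element of $\bar H$ of length at most $2^m$ differs from a length-$(2^m+C_0 r)$ lift by a length-$C_0 r$ element of $Q$, and is therefore the image of an element of $H$ of length at most $2^{m+1}$ (for $m\ge n_1$), so $\tilde H_m\subseteq\pi(H_{m+1})$. Thus the two chains interleave, $\pi(H_m)\subseteq\tilde H_m\subseteq\pi(H_{m+1})$, and the number of $n\ge n_1$ with $\pi(H_{n+1})\ne\pi(H_n)$ is at most twice the total number of $m$ with $\tilde H_{m+1}\ne\tilde H_m$. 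It therefore remains to prove a purely algebraic statement, with no reference to $r$: \emph{if $\bar H$ is a subgroup of a group $\bar G$ that is virtually nilpotent of complexity at most $C_0$ and generated by at most $k$ elements, then the chain $\tilde H_m$ strictly increases on at most $C(C_0,k)$ scales.}

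For this, note that $\bar H$ is itself virtually nilpotent of bounded complexity, hence has a well-defined integer growth degree $e=e(\bar H)\le D$, and that $\tilde H_m$ increases with $m$ to $\bar H$, so $e(\tilde H_m)$ is non-decreasing and eventually equals $e$. Using the standard facts that among subgroups of such a group one of \emph{smaller} growth degree has infinite index and one of \emph{equal} growth degree has finite index, the scales split into at most $e+1\le D+1$ maximal ``plateaux'' on which $e(\tilde H_m)$ is constant. On a plateau, each $\tilde H_m$ sits between its value $\bar K$ at the start of the plateau and a fixed finite-index overgroup $P$ (the common isolator of the $\tilde H_m$ on the plateau), and $[P:\bar K]<\infty$; if this index can be bounded by $C(C_0,k)$, then each plateau contributes at most $\log_2[P:\bar K]=O_{C_0,k}(1)$ strict inclusions (each at least halving the index), and summing over the $\le D+1$ plateaux finishes the proof.

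The hard part is bounding $[P:\bar K]$ uniformly. I would pass to the finite-index nilpotent subgroup $N\cap P$, fix a Mal'cev basis $u_1,\dots,u_d$ ($d\le D$) adapted to its lower central series, and use the comparison $|u_1^{a_1}\cdots u_d^{a_d}|_{\bar G}\asymp\max_i|a_i|^{1/w_i}$ with weights $1\le w_i\le C_0$, the implied constants depending only on $C_0$ and $k$; by minimality of the plateau threshold, the ball of that radius then contains elements whose Mal'cev coordinates are as small as possible, up to these constants, in every graded piece, and a classical successive-minima argument shows such elements generate a subgroup of $P$ of index bounded in terms of $C_0$ and $k$. Making this geometry-of-numbers estimate uniform over the entire class of nilpotent groups of bounded complexity --- in particular obtaining both the word-length comparison constants and the successive-minima index bound explicitly in terms of $C_0$ and $k$ --- is the main obstacle; everything else is bookkeeping, and deducing \cref{thm:main} from the result is the content of \cref{sec:wrap_up}.
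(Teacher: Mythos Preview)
Your reduction via \cref{thm:BGT_metric} to a statement about subgroup chains inside a virtually nilpotent group of bounded step, rank and index is correct and matches the paper's proof almost exactly (the paper interleaves $H_n$ with $H'_n=\langle \pi(H)\cap B_{2^n}\rangle$ and then with $\tilde H_n=\langle H'\cap N\cap B_{2^n}\rangle$, using \cref{lem:generating_finite_index} and \cref{cor:finite_index_subgroup_chain} where you use a direct interleaving; both work). The substantive divergence is in how you propose to handle the nilpotent statement itself.

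There is a genuine error in your plateau argument. You want to bound $[P:\bar K]$ where $P$ is the isolator of $\bar K=\tilde H_a$ in $N$, but this quantity is \emph{not} uniformly bounded: take $\bar G=\Z^2$, $\bar H=\langle(2,0),(1,p)\rangle$. Once $2^m\ge p$ you are on the final plateau with $\bar K=\bar H$, the isolator is $P=\Z^2$, and $[P:\bar K]=2p$. What \emph{is} bounded (and what the paper bounds, in the abelian model \cref{thm:Abelian_convex}) is $[\tilde H_b:\tilde H_a]$, using the fact that $\tilde H_a\cap B_{2^a}=\tilde H_b\cap B_{2^a}$ so that the two lattices share the same successive minima with respect to that ball; Minkowski's second theorem then gives $[\tilde H_b:\tilde H_a]\le (\dim)!$. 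Your proposed bound on the isolator index does not follow from successive minima and is false.

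Even after correcting this, the ``main obstacle'' you flag is real and you do not resolve it. Working with a Mal'cev basis for $N\cap P$ means the basis depends on $P$, which depends on $\bar H$; getting the word-length comparison $|u_1^{a_1}\cdots u_d^{a_d}|\asymp\max_i|a_i|^{1/w_i}$ with constants depending only on $(C_0,k)$ is not automatic, and neither is a Minkowski-type statement for ``lattices'' in nilpotent groups (which are not additive). The paper sidesteps both issues with two moves you do not make: first, it lifts the entire problem to the \emph{free} step-$s$ nilpotent group $N_{s,S}$ and its Lie completion $G_{s,S}$ (the first paragraph of the proof of \cref{prop:main_nilpotent}), which is a single fixed object depending only on $s$ and $|S|\le k$, so all constants are automatically uniform; second, it develops the ``harmonious subgroup'' machinery of \cref{sec:nilpotent_lattices} (\cref{thm:harmonious}, \cref{cor:harmonious_bounded_index}) to pass, with bounded-index error depending only on $s$ and $d$, between multiplicative subgroups of $G_{s,S}$ and additive bracket-closed subgroups of its Lie algebra. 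After Pansu's theorem supplies a convex sandwich $\delta_r(K)\subseteq\log B_r\subseteq\delta_{C_0 r}(K)$, this reduces the nilpotent statement to the abelian \cref{thm:Abelian_convex}, which is proved by exactly the plateau-plus-Minkowski argument you sketched---but now in a vector space, where it actually works.
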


\medskip

\begin{remark}
We believe that it should be possible to take the constants in \cref{thm:main,thm:main_generalized} to be independent of the size of the generating set. We do not pursue this here.
\end{remark}




\medskip

\noindent \textbf{Other previous results.} A more classical way to quantify the sense in which a presentation is finite is through \emph{Dehn functions}, \emph{filling length functions}, and so-called \emph{isoperimetric functions} (which do not refer to the same kind of isoperimetry mentioned in our above discussion of the structure vs.\ expansion dichotomy); see \cite{bridson2002geometry} for an overview and \cite{gersten2003isoperimetric} for results on nilpotent groups. As far as we can tell, however, the literature on these notions focusses on asymptotic properties of a fixed group and is not suitable for the kind of uniform-in-the-group results we wish to prove. Besides this, the notion of uniform finite presentation we consider is also rather different from these notions in terms of Dehn functions etc.

\medskip

There is a striking resemblance between our theorem and the following theorem of Tao \cite{tao2017inverse} (see also \cite[Appendix A]{tessera2017scaling}), which also relies on the structure theory of Breuillard, Green, and Tao.

\begin{thm}[{\!\!\cite{tao2017inverse}, Theorem 1.9}]
For each non-negative integer $d$ there exist constants $m_0$ and $C$ depending only on $d$ such that if $G$ is a group, $S$ is a finite, symmetric generating set for $G$ containing the identity and satisfying $|S^{nm}|\leq m^d |S^n|$ for some integers $n\geq 1$ and $m\geq m_0$ then there exists a continuous, piecewise-linear, non-decreasing function $f:[0,\infty)\to [0,\infty)$ with $f(0)=0$ that has at most $C$ pieces, each of which has slope equal to an integer bounded by $C$, such that
\[
\left|\log\frac{|S^{knm}|}{|S^{nm}|} - f(\log k)\right| \leq C 
\]
for every integer $k\geq 1$.
\end{thm}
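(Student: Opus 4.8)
The plan is to deduce this from the Breuillard--Green--Tao structure theorem (\cref{thm:BGT_metric}) applied at the scale $nm$, exploiting the fact that, once polynomial growth is observed at one scale, the group is quasi-isometric (with additive error controlled by $nm$) to a nilpotent group, whose volume growth is governed by the Bass--Guivarc'h formula. First I would rescale: set $N=nm$ and apply \cref{thm:BGT_metric} with the bound $|S^{3N}|\leq \bigl(|S^{nm}|/|S^n|\bigr)\cdot\text{(something)}$; more precisely, $|S^{nm}|\le m^d|S^n|$ together with doubling-type manipulations (Ruzsa-triangle / the standard fact that $|S^{3N}|\le C(d)|S^N|$ once $|S^{Nm}|\le m^d|S^N|$ for $m\ge m_0$, which is the hypothesis repackaged) gives $|S^{3N}|\le K|S^N|$ for a constant $K=K(d)$, provided $m_0=m_0(d)$ is large enough. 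This lets us invoke the structure theorem at scale $N$: there is a finite normal $Q\triangleleft G$ with fibres of diameter $\le CN$, the quotient $G/Q$ has a nilpotent finite-index normal subgroup $\bar N$ of rank/step/index $\le C$, and $\pi:\Cay(G,S)\to\Cay(G/Q,\pi(S))$ is a $(1,CN)$-quasi-isometry.

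The key point is then that for a virtually nilpotent group with the above bounded complexity, the exact volume growth is given by the Bass--Guivarc'h theorem: there is an integer $D\le C$ (the homogeneous dimension) and constants $0<c_1\le c_2$ depending only on the complexity (hence only on $d$) such that $c_1 \rho^{D}\le |\pi(S)^{\rho}|\le c_2 \rho^{D}$ for all $\rho\ge 1$. Using the $(1,CN)$-quasi-isometry and the bounded fibres of $\pi$, the ball of radius $\rho N$ in $G$ is comparable (up to multiplicative constants depending only on $d$, and after replacing $\rho N$ by $\rho N \pm CN$) to $|\pi(S)^{\rho N}|\asymp (\rho N)^D$. Hence for every integer $k\ge 1$,
\[
\log\frac{|S^{kNm}|}{|S^{Nm}|} = D\log(km) - D\log m + O_d(1) = D\log k + O_d(1).
\]
So in fact the quoted theorem holds with the single linear function $f(t)=Dt$ (one piece, integer slope $D\le C$), which a priori looks stronger than asked. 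The reason Tao's statement allows several pieces is the small-$k$ regime: when $kNm$ is not yet large enough for the asymptotic nilpotent growth to dominate — that is, for $k$ below some threshold depending on how $N$ compares to the ``scale at which nilpotency kicks in'' — the growth can genuinely look different (e.g.\ like a torus $\prod \Z/n_i\Z$ that has not yet saturated), and one needs extra linear pieces with possibly larger integer slopes to interpolate. I would handle this by a finite case analysis over $\log k$ in dyadic blocks up to the saturation scale: on each such block the structure theorem (reapplied at the larger scale $2^j N$) again yields a Bass--Guivarc'h exponent, these exponents are non-increasing in the scale and bounded by $C$, and patching the resulting finitely many linear segments — with the $f(0)=0$, monotone, continuous, piecewise-linear normalization — gives the claimed $f$ with at most $C$ pieces.

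The main obstacle I anticipate is precisely this bookkeeping of the pre-saturation regime: one must show that the number of distinct ``local growth exponents'' seen across scales is $O_d(1)$, which is a monotonicity-plus-boundedness argument for the exponents but requires care because the finite normal subgroup $Q$ and the nilpotent model can change as the scale increases — this is exactly the kind of ``same graph, different sides of the dichotomy at different scales'' difficulty alluded to in the introduction, and it is morally the same phenomenon that \cref{thm:main} controls. The cleanest route is probably to avoid re-deriving all of this and instead quote the relevant quantitative Bass--Guivarc'h estimates (as in \cite{bgt12}, Corollary 11.9, and the references on nilpotent growth) as a black box, so that the only genuinely new work is the quasi-isometry translation between $\Cay(G,S)$ and the nilpotent model at each dyadic scale and the final interpolation/patching step. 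Everything else — the Ruzsa-triangle reduction to three-ball doubling, the comparison of $|S^{\rho N\pm CN}|$ with $|S^{\rho N}|$, and the logarithm bookkeeping — is routine.
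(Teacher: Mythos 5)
This theorem is not proved in the paper you were given: it is Theorem~1.9 of Tao~\cite{tao2017inverse}, quoted purely for comparison with \cref{thm:main} (the paper even spends the surrounding discussion explaining how Tao's ``kinks'' are \emph{not} in correspondence with the paper's ``new relations''). So there is no in-paper proof to compare against, and I will evaluate your sketch on its own merits.

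Your high-level instinct---reduce to a nilpotent model via \cref{thm:BGT_metric}, then read off polynomial growth from Bass--Guivarc'h---is the right first step and is indeed how Tao's argument begins. But the sketch has a genuine gap in exactly the place you flag as ``bookkeeping.'' Your first version (single piece $f(t)=Dt$) is false, as you note; the fallback is to ``reapply the structure theorem at each dyadic scale $2^jN$, observe the resulting Bass--Guivarc'h exponents are non-increasing and bounded, and patch.'' This does not work as stated, for two reasons. First, the exponent of the BGT nilpotent model at scale $2^jN$ is not some intrinsic ``local growth exponent'' of $G$ at that scale: the theorem only tells you that the model is quasi-isometric to $G$ with additive error $O(2^jN)$, so the model's growth only matches $G$'s growth at scales $\gg 2^jN$, and at the scale $2^jN$ itself the comparison is vacuous. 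Consequently the sequence of exponents you would extract need not be monotone, need not stabilize, and---most importantly---its distinct values are not what produce Tao's kinks. Second, and more fundamentally, the kinks are not caused by the asymptotic exponent changing between scales (it cannot: once polynomial growth is witnessed, BGT fixes a single virtually nilpotent model and a single asymptotic exponent). They are caused by the \emph{internal geometry of one fixed nilpotent model}: a nilprogression of rank $r$ and step $s$ with very lopsided side lengths has a growth function that passes through up to $\sim s$ different polynomial regimes as the radius sweeps past the various side lengths, before settling into the homogeneous-dimension regime. This is exactly what happens in Tao's Heisenberg example (quoted in the paper), where the kink at scale $\log_2 N$ is produced by the single generating set $S$ saturating one coordinate direction before the others. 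Re-running BGT at a larger scale does not see this; the bounded number of pieces comes from the bounded rank and step of a \emph{single} nilprogression, not from a count of how many times the BGT output can change.

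So the missing idea is: produce one nilpotent/nilprogression model once, and then prove a quantitative growth statement for nilprogressions of bounded rank and step which says that $\log|P^k|$ as a function of $\log k$ is within $O_d(1)$ of a piecewise-linear function with at most $O_d(1)$ pieces and bounded integer slopes. That combinatorial lemma about nilprogressions is the real content of Tao's Theorem~1.9, and your sketch currently defers it to a black box it never names. Everything else in your outline (the pigeonhole passage from $|S^{nm}|\le m^d|S^n|$ to three-ball doubling at some scale in $[n,nm]$, the quasi-isometry transfer, the $\pm O(N)$ radius slop) is indeed routine, but those are the easy parts.
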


Informally, this theorem states that, once we witness polynomial growth on a sufficiently large scale, the log-log plot of the growth function is well-approximated by a continuous, piecewise-linear function with bounded, integer valued slopes and a bounded number of ``kinks'' connecting the different pieces.

\medskip

Naively, one might hope that our bounded number of scales on which a new relation occurs are in correspondence with Tao's bounded number of scales on which the growth function has a ``kink'' in its log-log plot. Unfortunately this is not the case, at least when one allows generating sets of unbounded size: one can have a new relation without having a kink, and can have a kink without having a new relation. Indeed, as explained in \cite[Example 1.11]{tao2017inverse}, taking
\[
G = \begin{pmatrix}
1 & \Z & \Z \\ 0 & 1 & \Z \\ 0 & 0 & 1
\end{pmatrix} 
\qquad \text{ and } \qquad
S = \begin{pmatrix}
1 & [-N,N] & [-N^3,N^3] \\ 0 & 1 & [-N,N] \\ 0 & 0 & 1
\end{pmatrix}
\]
for a large integer $N$ yields
\[
\log \frac{|S^n|}{|S|} = \begin{cases} 3 \log n \pm O(1) & 1\leq n \leq N \\
4\log n - \log N \pm O(1) & n > N,
\end{cases} 
\]
so that this example's growth function has a kink at scale $\log_2 N$. On the other hand the pair $(G,S)$ does \emph{not} have a new relation at any at $k \geq 3$, and in particular does not have a new relation on the scale where it has a kink when $N$ is large.
Indeed, the relations of $(G,S)$ are generated by the
usual relations for the Heisenberg group
\[
\left[\biggl(\begin{smallmatrix}
1 & 1  & 0 \\ 0 & 1 & 0 \\ 0 & 0 & 1
\end{smallmatrix}\biggr), \biggl(\begin{smallmatrix}
1 & 0  & 0 \\ 0 & 1 & 1 \\ 0 & 0 & 1
\end{smallmatrix}\biggr)\right] = \biggl(\begin{smallmatrix}
1 & 0  & 1 \\ 0 & 1 & 0 \\ 0 & 0 & 1
\end{smallmatrix}\biggr),
\]
\[
\biggl(\begin{smallmatrix}
1 & 1  & 0 \\ 0 & 1 & 0 \\ 0 & 0 & 1
\end{smallmatrix}\biggr)
\biggl(\begin{smallmatrix}
1 & 0  & 1 \\ 0 & 1 & 0 \\ 0 & 0 & 1
\end{smallmatrix}\biggr) = \biggl(\begin{smallmatrix}
1 & 0  & 1 \\ 0 & 1 & 0 \\ 0 & 0 & 1
\end{smallmatrix}\biggr)
\biggl(\begin{smallmatrix}
1 & 1  & 0 \\ 0 & 1 & 0 \\ 0 & 0 & 1
\end{smallmatrix}\biggr),  
\qquad\text{and}\qquad
\biggl(\begin{smallmatrix}
1 & 0  & 0 \\ 0 & 1 & 1 \\ 0 & 0 & 1
\end{smallmatrix}\biggr) 
\biggl(\begin{smallmatrix}
1 & 0  & 1 \\ 0 & 1 & 0 \\ 0 & 0 & 1
\end{smallmatrix}\biggr) = \biggl(\begin{smallmatrix}
1 & 0  & 1 \\ 0 & 1 & 0 \\ 0 & 0 & 1
\end{smallmatrix}\biggr) \biggl(\begin{smallmatrix}
1 & 0  & 0 \\ 0 & 1 & 1 \\ 0 & 0 & 1
\end{smallmatrix}\biggr)
\]
together with the following three sets of relations relating the extra generators in $S$ to the standard generators:
\[
\left \{\biggl(\begin{smallmatrix}
1 & a \pm  1 & c \\ 0 & 1 & b \\ 0 & 0 & 1
\end{smallmatrix}\biggr) = \biggl(\begin{smallmatrix}
1 & a  & c \\ 0 & 1 & b \\ 0 & 0 & 1
\end{smallmatrix}\biggr) \biggl(\begin{smallmatrix}
1 & \pm 1  & 0 \\ 0 & 1 & 0 \\ 0 & 0 & 1
\end{smallmatrix}\biggr) : a,b \in [-N,N], c \in [-N^3,N^3] \right\},
\]
\[
\left \{\biggl(\begin{smallmatrix}
1 & a  & c \\ 0 & 1 & b \pm 1 \\ 0 & 0 & 1
\end{smallmatrix}\biggr) = \biggl(\begin{smallmatrix}
1 & 0  & 0 \\ 0 & 1 & \pm 1 \\ 0 & 0 & 1
\end{smallmatrix}\biggr)\biggl(\begin{smallmatrix}
1 & a  & c \\ 0 & 1 & b \\ 0 & 0 & 1
\end{smallmatrix}\biggr)  : a,b \in [-N,N], c \in [-N^3,N^3] \right\},
\]
\[
\left \{\biggl(\begin{smallmatrix}
1 & a  & c \pm 1 \\ 0 & 1 & b \\ 0 & 0 & 1
\end{smallmatrix}\biggr) = \biggl(\begin{smallmatrix}
1 & a  & c \\ 0 & 1 & b \\ 0 & 0 & 1
\end{smallmatrix}\biggr) \biggl(\begin{smallmatrix}
1 & 0  & \pm 1 \\ 0 & 1 & 0 \\ 0 & 0 & 1
\end{smallmatrix}\biggr) : a,b \in [-N,N], c \in [-N^3,N^3] \right\}.
\]
These relations all have word length at most five in $(G,S)$, so that the example has the desired properties.
 Conversely, taking the direct product $G \times \Z/N$ with generating set $S \times \{-1,0,1\}$ yields an example where there is a new relation at scale $\log_2 N$ but where the growth function does not have any kinks.

\medskip

\noindent \textbf{About the proof.} It is natural to describe the proof of \cref{thm:main,thm:main_generalized} ``backwards'', as a sequence of reductions, although we have written it ``forwards'' as a sequence of extensions and generalizations. In this backwards description, the ``first'' step (which is the last part of the paper) is to reduce from groups of polynomial growth to  nilpotent groups of bounded step using the Breuillard-Green-Tao theorem. Next, this statement about nilpotent groups is in turn reduced to an analogous statement about breadth-first exploration of discrete subgroups in a \emph{Carnot group}, a simply connected nilpotent Lie group carrying the additional structure of a \emph{stratification} and  \emph{homogeneous left-invariant metric}. Nilpotent groups are related to Carnot groups for example by \emph{Pansu's theorem} \cite{pansu1983croissance,breuillard2013rate}, which states roughly that the large-scale geometry of a finitely generated nilpotent group is well-modelled by an appropriate Carnot group equipped with a left-invariant homogeneous metric. Finally, this statement about Carnot groups is reduced to a statement about \emph{vector spaces} using the close connection between the discrete subgroups of a simply connected nilpotent Lie group and the additive bracket-closed subgroups of its associated Lie algebra. This step of the reduction is the most involved part of the paper, with the connection between additive and multiplicative lattices being developed at length in \cref{sec:nilpotent_lattices}. This ends the chain of reductions, and leaves us with a problem we must actually solve directly: Bounding the number of times we find an ``unexpected element'' of a discrete subgroup of $\R^d$ as we explore the subgroup with an increasing family of convex, symmetric sets. This is done in \cref{sec:Abelian} as an application of Minkowski's second theorem, a classical result in the geometry of numbers. 

\medskip

Let us stress again that we have described the argument here in the opposite order to the way we carry it out, so that the result about subgroups of $\R^d$ is the first thing we prove.

\medskip

\noindent \emph{Disclaimer:} Neither author is an expert in approximate groups, Lie theory, or the geometry of numbers. As such, it is likely that we have included a larger amount of detail in the proofs than would be considered necessary by experts, or have re-derived known results from scratch. While we have attempted to provide appropriate attribution to the intermediate results of the paper as much as possible, we would be happy to receive comments and corrections from experts.

\begin{remark}
  Since the present paper first appeared, two relevant papers by Tessera and Tointon have since appeared. The first paper \cite{tessera2024ballsgroupsvolumestructure} establishes a stronger version of \cref{thm:main} as a corollary, with optimal bounds on the number of scales at which new relations appear that do not depend on the degree. The proof is very different than that given here. The second paper \cite{tessera2023smalldoublingimpliessmall} allows us to replace the ``small tripling'' hypothesis $\Gr(3r)\leq K \Gr(r)$ in \cref{thm:main,thm:main_generalized} with the more natural hypothesis of ``small doubling'' $\Gr(2r)\leq K \Gr(r)$ because, as the title of the paper says: \emph{Small doubling implies small tripling on large scales}.
\end{remark}

\section{Background on nilpotent groups and Lie groups}

In this section we review the relevant background material and establish some notational conventions. We have included a rather thorough account of the basic theory with the hope that our paper can be easily understood by probabilists. 

\medskip

Given a group $G$, the \textbf{commutator} of two elements $x,y\in G$ is defined by $[x,y]=xyx^{-1}y^{-1}$. The \textbf{lower central series} of $G$ is defined recursively by $G_1=G$ and $G_{i+1}=[G_i,G]$ for each $i\geq 1$, where we write $[A,B]:=\{[a,b]:a\in A, b\in B\}$ for subsets $A$ and $B$ of $G$. The group $G$ is said to be \textbf{nilpotent} if $G_{i+1}=\{\mathrm{id}\}$ for all sufficiently large $i$, with the minimal such $i$ denoted by $s$ and known as the \textbf{step} of $G$. A group is said to be \textbf{virtually nilpotent} if it has a nilpotent subgroup of finite index. Given $s\geq 1$ and a set $S$, the \textbf{free step $s$ nilpotent group} $N_{s,S}$ is defined to be the quotient of the free group $F_S$ by the step-$s$ nilpotency relations, which state that all iterated commutators of length at least $s+1$ are equal to the identity. 
 The free step $s$ nilpotent group $N_{s,S}$ can also be defined up to unique $S$-preserving isomorphism by the universal property that it is nilpotent of step at most $s$, contains $S$, and every function from $S$ to a nilpotent group of step at most $s$ can be uniquely extended to a homomorphism from $N_{s,S}$ to that group.

\subsection{(Nilpotent) Lie groups and the Baker-Campbell-Hausdorff formula}

Recall that a (real) \textbf{Lie group} is a group that is also a finite-dimensional real smooth manifold, in such a way that the group operations of multiplication and inversion are smooth maps $G\times G \to G$ and $G\to G$. By Gleason, Montgomery, and Zippin's solution to Hilbert's fifth problem \cite{MR49204,MR49203}, one can equivalently define a Lie group as a group that is also a finite-dimensional \emph{topological} manifold with continuous multiplication and inversion operations; such a group carries a unique smooth structure compatible with its algebraic structure. More generally, Yamabe \cite{MR36766} proved that every locally compact, connected topological group is a projective limit of Lie groups (possibly of divergent dimension). These facts underlie the ubiquity of Lie groups in the scaling limit theory of discrete groups, and in particular are used directly in Gromov's original proof of his polynomial growth theorem \cite{gromov81poly}. Further background on these topics can be found in \cite{MR3237440}. For our purposes, Lie groups become relevant primarily via a theorem of Pansu, which allows us to approximate the balls in the Cayley graph of a nilpotent group in terms of the balls in a certain \emph{left-invariant homogeneous metric on a Carnot group}; this is explained in \cref{subsec:Carnot_background}.

\medskip

\textbf{Lie algebras}. A \textbf{Lie algebra} $\mathfrak{g}$ is a vector space equipped with a binary operation, the Lie bracket $[\cdot, \cdot]: \mathfrak{g} \times \mathfrak{g} \rightarrow \mathfrak{g}$, that is bilinear, antisymmetric ($[X, Y] = -[Y, X]$ for all $X, Y \in \mathfrak{g}$), and satisfies the Jacobi identity ($[X, [Y, Z]] + [Y, [Z, X]] + [Z, [X, Y]] = 0$ for all $X, Y, Z \in \mathfrak{g}$). A subset $A$ of a Lie algebra is said to be \textbf{bracket-closed} if $[X,Y]\in A$ for every $X,Y\in A$.
For ease of reading, we will loosely follow the convention that points in a Lie algebra are denoted using upper-case letters, while points in a Lie group are denoted using lower-case letters. We will also assume without further comment that all Lie algebras are finite-dimensional. 

\medskip

To each Lie group $G$, we can associate a Lie algebra $\mathfrak{g}$ arising from the tangent space at the identity; the details of this construction are not important to us and can be found in any textbook on the subject. In the concrete case that $G$ is a Lie subgroup of a general linear group $\mathrm{GL}_n$ for some $n\geq 1$, the affine space
  $I+\mathfrak{g}$ is precisely the tangent space at the identity to $G$ in the space of all $n\times n$ matrices, so that $\mathfrak{g}$ is a Lie subalgebra (i.e.\ a bracket-closed linear subspace) of the Lie algebra $\mathfrak{gl}_n$ of all $n\times n$ matrices with Lie bracket defined by the commutator $[X,Y]=XY-YX$. (In particular, $\mathfrak{gl}_n$ is the Lie algebra associated to the Lie group $\mathrm{GL}_n$.)
%
%
In fact this case is not particularly special:
Ado's theorem states that every Lie algebra is isomorphic to a Lie subalgebra  of $\mathfrak{gl}_n$ for some $n\geq 1$ \cite[Chapter 2.3]{MR3237440}. (Ado's theorem does \emph{not} imply that every Lie \emph{group} is isomorphic to a Lie subgroup of a general linear group, although it does imply a ``local'' version of the same claim.)

\medskip

The fundamental theorems of Lie (see e.g.\ \cite[Chapter 2.5.1]{MR3237440}) state in particular that there is a one-to-one correspondence between (isomorphism classes of) Lie algebras and \emph{simply connected} Lie groups. On the other hand, Lie groups that are connected but not simply connected have universal covers which are simply connected Lie groups with the same Lie algebra. 

\medskip

The \textbf{lower central series} of the Lie algebra $\mathfrak{g}$ is defined recursively by $\mathfrak{g}_1=\mathfrak{g}$ and $\mathfrak{g}_{i+1}=[\mathfrak{g}_i,\mathfrak{g}]$ for each $i\geq 1$, where if $A$ and $B$ are two subsets of $\mathfrak{g}$ then we write $[A,B]=\{[a,b]:a\in A,b\in B\}$. It is a simple consequence of the Jacobi identity that $[\mathfrak{g}_i,\mathfrak{g}_j] \subseteq \mathfrak{g}_{i+j}$ for every $i,j \geq 1$ \cite[Lemma 1.4.3]{Wang2023}. The Lie algebra $\mathfrak{g}$ is said to be \textbf{nilpotent} if $\mathfrak{g}_{i+1}=\{0\}$ for all sufficiently large $i$; the minimal such $i$ is called the \textbf{step} of $\mathfrak{g}$ and is usually denoted $s$. Nilpotence of a Lie \emph{group} is defined as for any other group (meaning that the lower central series terminates at the identity subgroup); a connected Lie group is nilpotent if and only if its corresponding Lie algebra is nilpotent.

\medskip

\textbf{Lie polynomials.} 
A \textbf{Lie monomial} of degree $d$ in the terms $X_1, X_2, \ldots, X_n \in \mathfrak{g}$ is an expression obtained by taking iterated Lie brackets of these terms in some way, so that the sum over $i$ of the total number of times $X_i$ appears is $d$. In other words, a Lie monomial of degree $1$ in the terms $X_1,\ldots,X_n$ is an expression of the form $L(X_1,\ldots,X_n)=X_i$ for some $1\leq i \leq n$, while each Lie monomial of degree $d$ can be written $L(X_1,\ldots,X_n)=[L_1(X_1,\ldots,X_n),L_2(X_1,\ldots,X_n)]$ for some Lie monomials $L_1$ and $L_2$ whose degrees sum to $d$. 
A \textbf{Lie polynomial} $P(X_1, X_2, \ldots, X_n)$ in elements $X_1, X_2, \ldots, X_n \in \mathfrak{g}$ is a linear combination of Lie monomials; it is said to be \textbf{homogeneous of degree $d$} if every Lie monomial in the linear combination has degree $d$. Thus, homogeneous Lie polynomials of degree $d$ obey the scaling transformation $P(\lambda X_1,\ldots, \lambda X_n)=\lambda^d P(X_1,\ldots, X_n)$. 

\medskip

\textbf{The exponential map.} Given a Lie group $G$ and associated Lie algebra $\mathfrak{g}$, there is a canonically defined \textbf{exponential map} $\exp:\mathfrak{g}\to G$, which for Lie subgroups of $\mathrm{GL}_n$ coincides with ordinary matrix exponentiation. (We will omit the general definition of the exponential map; everything we need to know about it will be captured by the Baker-Campbell-Hausdorff formula.) The exponential map is smooth, and is a diffeomorphism in a neighbourhood of the identity, but might not be injective or surjective.
The \textbf{Baker-Campbell-Hausdorff (BCH) formula} \cite[Chapter 1.2.5]{MR3237440} states that if $G$ is a Lie group with Lie algebra $\mathfrak{g}$ then there exists an open neighbourhood $U$ of the origin such that
\begin{align*}
\log \left[\exp X \exp Y\right] = X + Y + \frac{1}{2}[X,Y]+\frac{1}{12}[X,[X,Y]]-\frac{1}{12}[Y,[X,Y]]+\cdots
= \sum_{i=1}^\infty L_i(X,Y)
\end{align*}
for all $X,Y \in U$, where each $L_i$ is a homogeneous Lie polynomial of degree $i$ with rational coefficients and where we write $\log$ for the inverse of the exponential map on $\exp(U)$. (The Lie polynomials $L_i$ appearing in the BCH formula are universal and do not depend on the choice of Lie group $G$.) When $G$ is a simply connected nilpotent Lie group, all terms with $i>s$ are identically zero, the exponential function $\exp:\mathfrak{g}\to G$ is defined globally, and the BCH formula holds for \emph{all} $X,Y \in\mathfrak{g}$.
This lets us define the \textbf{BCH product} on the Lie algebra $\mathfrak{g}$ as
\[X\diamond Y := \log \left[e^Xe^Y\right]  = X + Y + \frac{1}{2}[X,Y]+\frac{1}{12}[X,[X,Y]]-\frac{1}{12}[Y,[X,Y]]+\cdots,
\]
so that $(\mathfrak{g},\diamond)$ is isomorphic to $G$ as a Lie group. Note that $(aX) \diamond (bX)=(a+b)X$ for every $X \in \mathfrak{g}$ and $a,b\in \R$, so that $0$ is both the additive and BCH identity and that $(-X)$ is both the additive and BCH inverse of $X$.

\begin{example}
\label{ex:heisenberg_def}
The Heisenberg group $H$ and its Lie algebra $\mathfrak{h}$ are given by
\[
H=\left(\begin{matrix} 1 & \R & \R \\ 0 & 1 & \R \\ 0&0&1\end{matrix}\right) \qquad \text{ and } \qquad
\mathfrak{h}=\left(\begin{matrix} 0 & \R & \R \\ 0 & 0 & \R \\ 0&0&0\end{matrix}\right),
\]
with exponential map and logarithm
\[
\exp \left(\begin{matrix} 0 & a & c \\ 0 & 0 & b \\ 0&0&0\end{matrix}\right) = \left(\begin{matrix} 1 & a & c+\frac{ab}{2} \\ 0 & 1 & b \\ 0&0&1\end{matrix}\right) \qquad \text{ and } \qquad \log \left(\begin{matrix} 1 & a & c \\ 0 & 1 & b \\ 0&0&1\end{matrix}\right) = \left(\begin{matrix} 0 & a & c-\frac{ab}{2} \\ 0 & 0 & b \\ 0&0&0\end{matrix}\right).
\]
Note that the exponential map here is just the usual matrix exponential, which for $X\in \mathfrak{h}$ satisfies $e^X=I+X+\frac{1}{2}X^2$. The Lie bracket on $\mathfrak{h}$ is given by
\[
\left[\left(\begin{matrix} 0 & a & c \\ 0 & 0 & b \\ 0&0&0\end{matrix}\right),\left(\begin{matrix} 0 & x & z \\ 0 & 0 & y \\ 0&0&0\end{matrix}\right)\right] = \left(\begin{matrix} 0 & 0 & ay-xb \\ 0 & 0 & 0 \\ 0&0&0\end{matrix}\right).
\]
Since $H$ is step-$2$ nilpotent, the BCH multiplication on $\mathfrak{h}$ is given by
\begin{align*}
\left(\begin{matrix} 0 & a & c \\ 0 & 0 & b \\ 0&0&0\end{matrix}\right)\diamond \left(\begin{matrix} 0 & x & z \\ 0 & 0 & y \\ 0&0&0\end{matrix}\right) 
&=\left(\begin{matrix} 0 & a & c \\ 0 & 0 & b \\ 0&0&0\end{matrix}\right) + \left(\begin{matrix} 0 & x & z \\ 0 & 0 & y \\ 0&0&0\end{matrix}\right) +  \frac{1}{2}\left[\left(\begin{matrix} 0 & a & c \\ 0 & 0 & b \\ 0&0&0\end{matrix}\right),\left(\begin{matrix} 0 & x & z \\ 0 & 0 & y \\ 0&0&0\end{matrix}\right)\right] \\
&=\left(\begin{matrix} 0 & a+x & c+z+\frac{ay-bx}{2} \\ 0 & 0 & b+y \\ 0&0&0\end{matrix}\right).
\end{align*}
We will see in \Cref{ex:heisenberg_subgroups} that the non-integer rational coefficient $1/2$ appearing in this expression leads to complications when comparing lattices in $H$ and $\mathfrak{h}$.
\end{example}

\begin{remark}
It follows from the BCH formula that
\[
\log [e^X e^Y e^{-X}e^{-Y}] 
= [X,Y]+\frac{1}{2}[X,[X,Y]]+\frac{1}{2}[Y,[X,Y]]+\cdots
\]
for $X$ and $Y$ in a neighbourhood of the origin in $\mathfrak{g}$ (all of $\mathfrak{g}$ when $G$ is nilpotent and simply connected). As such, the BCH commutator and the Lie bracket agree to first order as $X,Y\to 0$, but are not exactly equal unless $G$ is nilpotent of step at most $2$.
\end{remark}

We will also make use of the \textbf{Zassenhaus formula} \cite{casas2012efficient}, a dual form of the BCH formula which states in particular that if $G$ is $s$-step nilpotent then
\begin{align}
e^{X+Y}&= e^{X} e^{Y} e^{-\frac{1}{2} [X,Y]} 
e^{\frac{1}{6}(2[Y,[X,Y]]+ [X,[X,Y]] )} 
e^{\frac{-1}{24}([[[X,Y],X],X] + 3[[[X,Y],X],Y] + 3[[[X,Y],Y],Y]) } \cdots\nonumber\\
&= e^Xe^Y e^{\tilde L_2(X,Y)}e^{\tilde L_3(X,Y)} \cdots e^{\tilde L_s(X,Y)}
\end{align}
for every $X,Y\in \mathfrak{g}$, where $\tilde L_i$ is a homogeneous Lie polynomial of degree $i$ with rational coefficients for each $i\geq 2$.

\medskip

 \textbf{Subgroups and lattices.}
Let $G$ be a simply connected nilpotent Lie group with Lie algebra~$\mathfrak{g}$. For each set $A\subseteq \mathfrak{g}$, we define $\mathscr{L}(A)$ to be the smallest Lie subalgebra of $\mathfrak{g}$ containing $A$. The exponential map identifies closed connected subgroups of $G$ with Lie subalgebras of $\mathfrak{g}$, so that every closed connected subgroup of $G$ is itself a simply connected nilpotent Lie group. (For general Lie groups, the image under the exponential map of a Lie subalgebra might not be closed, but for simply connected nilpotent groups it is always closed since the exponential map is a diffeomorphism.) As such, for each subset $A$ of $G$, the intersection of all closed connected subgroups of $G$ containing $A$ is a closed connected subgroup of $G$ that is equal to $\exp(\mathscr{L}(\log A))$. We write $\mathscr{C}(A)$ for this minimal closed connected subgroup of $G$ containing $A$.

\begin{thm}[Mal'cev]
If $G$ is a simply connected nilpotent Lie group and $H$ is a closed subgroup of $G$ then $G/H$ is compact if and only if $H$ is not contained in any proper closed connected subgroup of $G$. In particular, $\mathscr{C}(H)/H$ is compact.
\end{thm}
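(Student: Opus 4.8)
The whole theorem reduces to the single implication that \emph{if $\mathscr C(H)=G$ then $G/H$ is compact} (call this $(\star)$). Indeed, the ``in particular'' clause is $(\star)$ applied with $G$ replaced by the simply connected nilpotent Lie group $\mathscr C(H)$, inside which $H$ lies in no proper closed connected subgroup; the forward implication of the stated equivalence is the special case of $(\star)$ in which $\mathscr C(H)=G$; and the reverse implication is its contrapositive together with the following elementary fact. If $H$ lies in a proper closed connected subgroup then $K:=\mathscr C(H)=\exp\mathfrak k$ for some proper subalgebra $\mathfrak k\subsetneq\mathfrak g$, and a short filtration argument (if $\mathfrak k+\mathfrak g_2=\mathfrak g$ then iterating yields $\mathfrak g=\mathfrak k+\mathfrak g_j$ for all $j$, forcing $\mathfrak k=\mathfrak g$) shows $\mathfrak k+\mathfrak g_2\subsetneq\mathfrak g$; so $\mathfrak k$ lies in a codimension-one subspace $\mathfrak m$ containing $\mathfrak g_2=[\mathfrak g,\mathfrak g]$, which is therefore an ideal, giving $M:=\exp\mathfrak m\triangleleft G$ with $G/M\cong\R$. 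The continuous surjections $G/H\to G/K\to G/M\cong\R$ then exhibit $G/H$ as non-compact.

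We prove $(\star)$ by induction on $\dim G$. If $G$ is abelian then $G\cong\R^n$, and writing the closed subgroup $H$ as $V\oplus\Lambda$ with $V$ a subspace and $\Lambda$ a lattice in a complementary subspace, the hypothesis $\mathscr C(H)=G$ says precisely that $H$ spans $\R^n$, whence $G/H\cong(\R^n/V)/\overline\Lambda$ is a torus. If $G$ is non-abelian, let $A:=G_s$ be the last nontrivial term of the lower central series; it is a closed connected \emph{central} subgroup, $A\cong\R^k$ with $k\ge1$, and $\bar G:=G/A$ is simply connected nilpotent with $\dim\bar G<\dim G$. Writing $\pi\colon G\to\bar G$, the preimage $\pi^{-1}\bigl(\mathscr C_{\bar G}(\pi(H))\bigr)$ is a closed subgroup of $G$ containing $H$, and it is connected since it is an extension of the connected group $\mathscr C_{\bar G}(\pi(H))$ by the connected group $A=\ker\pi$; hence it equals $\mathscr C_G(H)=G$, so $\mathscr C_{\bar G}(\pi(H))=\bar G$.

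The crux is the claim that $B:=H\cap A$ is cocompact in $A$. Since $\mathscr C_G(H)=G$ is equivalent to $\mathscr L(\log H)=\mathfrak g$ (by the formula $\mathscr C(\cdot)=\exp\mathscr L(\log(\cdot))$ recorded above) and $\mathfrak g$ has step $s$, the subspace $\mathfrak g_s$ is spanned over $\R$ by the length-$s$ iterated Lie brackets of elements of $\log H$. On the other hand, for $h_1,\dots,h_s\in H$ and any parenthesization, the corresponding iterated \emph{group} commutator of $h_1,\dots,h_s$ lies in $H$ and has logarithm exactly equal to the corresponding iterated \emph{Lie} bracket of $\log h_1,\dots,\log h_s$: one proves by induction on the bracket tree, using the BCH commutator expansion $\log[e^a,e^b]=[a,b]+(\text{terms lying in deeper terms of the lower central series})$ and tracking which central-series term each summand belongs to, that the difference between the logarithm of the group commutator and the Lie bracket always lands strictly deeper, hence vanishes in $\mathfrak g_{s+1}=\{0\}$ for a weight-$s$ tree. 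Consequently, under the identification $\log\colon A\xrightarrow{\ \sim\ }\mathfrak g_s$, the closed subgroup $B$ corresponds to a closed subgroup of $\mathfrak g_s\cong\R^k$ containing a spanning set, and any closed subgroup of $\R^k$ that spans $\R^k$ is cocompact. I expect this to be the main obstacle, as it requires careful bookkeeping with iterated commutators and the lower central series filtration.

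To complete the inductive step, write $A=K_AB$ with $K_A\subseteq A$ compact; since $A$ is central, $HA=K_AH$, which is closed (a compact set times a closed set). Hence $\pi(H)=\pi(HA)$ is closed in $\bar G$, and the inductive hypothesis applied to $(\bar G,\pi(H))$ — which satisfies $\mathscr C_{\bar G}(\pi(H))=\bar G$ — shows that $\bar G/\pi(H)\cong G/HA$ is compact; moreover $HA/H\cong A/B$ is compact by the claim. Thus if $C\subseteq G$ is compact with $C\cdot HA=G$, then $(CK_A)\cdot H=G$ with $CK_A$ compact, so $G/H$ is compact. This closes the induction and proves $(\star)$, and with it the theorem.
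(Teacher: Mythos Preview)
The paper does not prove this statement: it is quoted as a classical theorem of Mal'cev in the background section, with no proof given. So there is nothing to compare against, and the relevant question is simply whether your argument is correct. It is.

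Your reduction to the single implication $(\star)$ is clean, and the contrapositive direction via a codimension-one ideal containing $\mathfrak g_2$ is the standard and correct way to see that $G/H$ surjects onto $\R$ when $\mathscr C(H)\subsetneq G$. The inductive scheme through the centre $A=G_s$ is also standard; the two nontrivial points are handled correctly. First, your claim that weight-$s$ iterated group commutators of elements of $H$ have logarithm exactly equal to the corresponding weight-$s$ Lie bracket of their logarithms is right: the inductive bookkeeping you sketch (using $\log[e^a,e^b]\equiv[a,b]$ modulo strictly deeper terms of the lower central series, and tracking filtration depth through the bracket tree) does show that all error terms land in $\mathfrak g_{s+1}=\{0\}$. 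Combined with the observation that $\mathfrak g_s$ is spanned by degree-$s$ Lie monomials in $\log H$ (which follows from $\mathscr L(\log H)=\mathfrak g$ since monomials of degree $>s$ vanish), this gives cocompactness of $B=H\cap A$ in $A$. Second, your argument that $\pi(H)$ is closed---writing $HA=K_AH$ with $K_A$ compact and $A$ central, then using that (compact)$\cdot$(closed) is closed in a topological group---is correct and is needed before the inductive hypothesis can be invoked.

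Two small remarks. Your labelling of ``forward'' and ``reverse'' implications is the opposite of how most readers would parse the biconditional as written, but the logic is unaffected. And in the abelian base case you should perhaps say explicitly that the structure theorem gives $H\cong \R^a\times\Z^b$ with $a+b\le n$, and that $\mathscr C(H)=\R^n$ forces $a+b=n$, so that the quotient is a $b$-torus; your phrasing ``a lattice in a complementary subspace'' is slightly ambiguous about whether $\Lambda$ is full-rank in that subspace.
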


(The quotients $G/H$ and $\mathscr{C}(H)/H$  appearing here are topological spaces, and do not carry group structures in general.)
We call a subgroup $\Gamma$ of a simply connected nilpotent Lie group $G$ a \textbf{lattice} in $G$ if it is discrete with compact quotient $G/\Gamma$. (Note that discrete subgroups of Hausdorff topological groups are automatically closed.)

\begin{remark}
For a general Lie group, a lattice is defined to be a discrete subgroup for which the quotient admits a finite left-invariant measure (a.k.a.\ Haar measure); for simply connected nilpotent Lie groups this is equivalent to $G/\Gamma$ being compact by Mal'cev's theorem.
This theorem also gives several further characterisations of a discrete subgroup being a lattice that we omit since we do not use them.
\end{remark}

A further theorem of Mal'cev \cite[Theorem 2.12]{raghunathan1972discrete} states that a simply connected nilpotent Lie group $G$ admits a lattice if and only if its Lie algebra $\mathfrak{g}$ admits a basis $e_1,\ldots,e_d$ for which the \textbf{structure constants} $(T_{i,j}^k)_{i,j,k}$, defined by $[e_i,e_j]=\sum_k T_{i,j}^k e_k$,  are rational. It is a consequence of this theorem \cite[Proposition 2.3.7]{Wang2023} that if $\Gamma$ is a lattice in a simply connected nilpotent Lie group then there exist (additive) lattices $\Lambda^-$ and $\Lambda^+$ in $\mathfrak{g}$ such that $\Lambda^- \subseteq \log \Gamma \subseteq \Lambda^+$. We will prove significantly stronger versions of this fact in \cref{sec:nilpotent_lattices}.

\subsection{Carnot groups and Pansu's theorem}
\label{subsec:Carnot_background}

A \textbf{Carnot group} is a simply connected nilpotent Lie group $G$ of some step $s$ whose Lie algebra $\mathfrak{g}$ is equipped with a decomposition
\[
\mathfrak{g}=V_1 \oplus V_2 \cdots \oplus V_s
\]
for some non-trivial linear subspaces $V_1,\ldots,V_s$ such that $[V_1,V_i]=V_{i+1}$ for every $1\leq i <s$ and $[V_1,V_s]=\{0\}$. 
It can be shown that
for such a decomposition we have moreover that $[V_i,V_j]\subseteq V_{i+j}$ for every $i,j \geq 1$, where $V_{i+j}:=\{0\}$ for $i+j>s$. A decomposition $\mathfrak{g}=V_1\oplus \cdots \oplus V_s$ satisfying these conditions is known as a \textbf{stratification} of $\mathfrak{g}$; the subspace $V_1$, which generates $\mathfrak{g}$ as a Lie algebra, is known as the \textbf{horizontal subspace}. Not every nilpotent lie algebra admits a stratification. While a Carnot group consists of both a nilpotent Lie group and a choice of stratification of its Lie algebra, we will nevertheless write e.g.\ ``let $G$ be a Carnot group'' when this does not cause confusion.

\medskip

 Given a Carnot group $G$ and a real number $\lambda>0$ the \textbf{dilation maps} $\delta_\lambda:\mathfrak{g}\to\mathfrak{g}$ and $D_\lambda:G\to G$ are defined by
\[
\delta_\lambda(x_1+x_2+\cdots+x_s)= \lambda x_1 + \lambda^2 x_2 + \cdots \lambda^s x_s \qquad \text{ and } \qquad D_\lambda(x) = \exp(\delta_\lambda(\log(x))),
\] 
where we write $x=x_1+x_2+\cdots + x_s$ for the decomposition of $x\in \mathfrak{g}$ associated to the stratification $\mathfrak{g}=V_1\oplus V_2 \oplus \cdots \oplus V_s$. These dilation maps satisfy the semigroup properties $\delta_{\lambda\mu}=\delta_\lambda \delta_\mu=\delta_\mu\delta_\lambda$ and $D_{\lambda \mu}=D_\lambda D_\mu=D_\mu D_{\lambda}$ for every $\lambda,\mu>0$.
It is a consequence of the Baker-Campbell-Hausdorff formula and the fact that $[V_i,V_j] \subseteq V_{i+j}$ that $D_\lambda$ is a Lie group automorphism of $G$ for every $\lambda>0$. A metric $d:G\times G\to [0,\infty)$ on a Carnot group is said to be \textbf{left-invariant and homogeneous} if
\[
d(zx,zy) = d(x,y) \qquad \text{and} \qquad d(D_\lambda x,D_\lambda y) = \lambda d(x,y) 
\]
for every $x,y,z\in G$ and $\lambda>0$. Note that the abelian group $\R^d$ is a Carnot group with $V_1=\R^d$, and the left-invariant homogeneous metrics on $\R^d$ seen as a Carnot group are equivalent to norms on $\R^d$. In general, left-invariant homogeneous metrics are closely analogous to norms, and also have the property that they are determined by the unit ball $B$ around the origin: 
\[
d(x,y)= \inf\{\lambda: x^{-1}y \in D_\lambda(B)\}.
\]
In particular, the ball $\{x:d(0,x)\leq \lambda\}$ is equal to $D_\lambda(B)$. 

\medskip

We now introduce the free step-$s$ nilpotent Lie algebra and the free step-$s$ Carnot group, referring the reader to \cite[Chapter 14.1]{bonfiglioli2007stratified} for proofs the objects we discuss here are well-defined.
Let $S$ be a finite set. The \textbf{free step-$s$ nilpotent Lie algebra} $\mathfrak{f}_{s,S}$ is defined to be the unique-up-to-$S$-preserving-isomorphism nilpotent Lie algebra of step $s$ that is generated by $S$ and is such that if $\mathfrak{g}$ is any nilpotent Lie algebra of step at most $s$ and $\phi:S\to \mathfrak{g}$ is any function, then there exists a unique Lie algebra homomorphism $\mathfrak{f}_{s,S}\to \mathfrak{g}$ extending $\phi$. The 
 Lie algebra $\mathfrak{f}_{s,S}$ may be
   equipped with a canonical stratification defined in terms of \emph{Hall bases}, making its associated BCH-multiplication Lie group $G_{s,S}=(\mathfrak{f}_{s,S},\diamond)$ into a Carnot group known as the \textbf{free step-$s$ Carnot group} or \textbf{free step-$s$ nilpotent Lie group} over $S$; see \cite[Chapters 14.1 and 14.2]{bonfiglioli2007stratified} for details. (It is convenient to define the free step-$s$ nilpotent Lie group over $S$ via BCH multiplication so that it contains the set $S$.) Moreover, this stratification $\mathfrak{f}_{s,S}=V_1 \oplus \cdots \oplus V_s$ has the property that $V_1$ is equal to the linear span of $S$. Finally, if we define $\Gamma_{s,S}$ to be the subgroup of $G_{s,S}$ generated by $S$, then $\Gamma_{s,S}$ is a lattice in $G_{s,S}$ that is isomorphic (via an $S$-preserving isomorphism) to the discrete free step-$s$ nilpotent group $N_{s,S}$. Indeed, the fact that $\Gamma_{s,S}$ is discrete in $G_{s,S}$ can be proven using Mal'cev's theorem on rational structure constants, since the structure constants in the Hall basis are all equal to $1$, while $\Gamma$ is a lattice since $S$ generates $G_{s,S}$ as a Lie group. (The fact that $\Gamma_{s,S}$ is discrete can also be proven using the techniques of \cref{sec:nilpotent_lattices}.) Finally, the fact that $\Gamma_{s,S}$ is isomorphic to $N_{s,S}$ can be deduced straightforwardly from the relevant universal properties since every torsion-free finitely generated nilpotent group can be embedded as a lattice in a nilpotent Lie group, sometimes known as the \textbf{Mal'cev completion} of the group \cite[Theorem 2.18]{raghunathan1972discrete}.

\medskip

In his thesis \cite{pansu1983croissance}, Pansu proved that Cayley graphs of finitely generated nilpotent groups converge under rescaling to Carnot groups equipped with certain left-invariant homogeneous metrics known as \emph{sub-Finsler metrics}. (We will not need the definition of sub-Finsler metrics in this paper.) Note that the Carnot group arising in this limit might \emph{not} be isomorphic to the Mal'cev completion of the relevant nilpotent group, and indeed the Mal'cev completion might not admit a stratification. However, if $N$ is a torsion-free nilpotent group with finite generating set $S$ and the lie algebra $\mathfrak{g}$ of the Mal'cev completion $G$ happens to be simply connected and admit a stratification $\mathfrak{g}=V_1\oplus \cdots \oplus V_s$ with $\log S \subseteq V_1$, then there exists a left-invariant homogeneous metric $d_G$ on $G$ such that
\begin{equation}
\label{eq:Pansu}
d_S(x,y)=(1\pm o(1)) d_G(x,y) \qquad \text{ as $d_S(x,y)\to \infty$},
\end{equation}
where $d_S$ denotes the word metric on $N$. It follows in particular that $(N,\frac{1}{n}d_S)$ converges to $(G,d_G)$ in the Gromov-Hausdorff sense as $n\to\infty$. See \cite{breuillard2013rate,tashiro2022speed} for details and quantitative refinements of this theorem.

\section{Exploring abelian lattices with convex sets}
\label{sec:Abelian}

In this section we prove the following theorem, which will eventually be used to prove our main theorems by reduction to the abelian case.

\begin{thm}
\label{thm:Abelian_convex}
Let $d\geq 1$, let $\Lambda$ be a discrete subgroup of $\R^d$, and let $K_1 \subseteq K_2 \subseteq \cdots$ be an increasing sequence of non-empty, symmetric, convex sets in $\R^d$. For each $n\geq 1$ let $\Lambda_n = \spanZ(\Lambda \cap K_n)$ so that $\Lambda_1 \subseteq \Lambda_2 \subseteq \cdots$ is an increasing sequence of subgroups of $\Lambda$. Then 
\[
\#\{n :\Lambda_{n+1}\neq \Lambda_n\} \leq d +1 + \sum_{\ell=1}^d \lfloor \log_2 \ell! \rfloor.
\]
\end{thm}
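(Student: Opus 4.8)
The plan is to split the jumps $n$ with $\Lambda_{n+1}\neq\Lambda_n$ into \emph{rank jumps}, where $\operatorname{span}_\R(\Lambda_{n+1})\supsetneq\operatorname{span}_\R(\Lambda_n)$, and \emph{index jumps}, where the two spans agree (so that $\Lambda_n$ has finite index $\geq 2$ in $\Lambda_{n+1}$). Since the $\Lambda_n$ are increasing, their ranks are non-decreasing in $n$ and lie in $\{0,1,\dots,d\}$, so there are at most $d$ rank jumps, and it remains to bound the number of index jumps.

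For the index jumps, fix $\rho\geq 1$ and set $I_\rho=\{n:\operatorname{rank}(\Lambda_n)=\rho\}$, which is an interval in $\N$ because the ranks are non-decreasing. If $n,n+1\in I_\rho$ then $\operatorname{span}_\R(\Lambda_n)\subseteq\operatorname{span}_\R(\Lambda_{n+1})$ with both of dimension $\rho$, hence equal; iterating, all $\Lambda_n$ with $n\in I_\rho$ span a common $\rho$-dimensional subspace $W_\rho$, and since $\operatorname{span}_\R(\Lambda\cap K_n)=W_\rho$ we in fact have $\Lambda\cap K_n\subseteq W_\rho$, so $\Lambda_n=\spanZ(L_\rho\cap K_n)$ where $L_\rho:=\Lambda\cap W_\rho$ is a full-rank lattice in $W_\rho\cong\R^\rho$. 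Now $\Lambda_n\subseteq L_\rho$ and the index $[L_\rho:\Lambda_n]$ is a non-increasing sequence of positive integers that decreases by a factor $\geq 2$ at each index jump inside $I_\rho$, so the number of index jumps in $I_\rho$ is at most $\lfloor\log_2[L_\rho:\Lambda_{\min I_\rho}]\rfloor$. Summing over $\rho$, the theorem follows once we show $[L_\rho:\Lambda_{\min I_\rho}]\leq\rho!$.

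This last inequality is where Minkowski's second theorem enters. Writing $m=\min I_\rho$, the group $\Lambda_m=\spanZ(L_\rho\cap K_m)$ is finitely generated, hence generated by finitely many $g_1,\dots,g_t\in L_\rho\cap K_m$; replacing $K_m$ by the symmetric convex polytope $\mathrm{conv}\{\pm g_1,\dots,\pm g_t\}\subseteq K_m\cap W_\rho$ leaves $\Lambda_m$ unchanged while making the body bounded and closed, which reduces us to the following lemma: \emph{if $L$ is a full-rank lattice in $\R^n$ and $K$ is a bounded closed symmetric convex body with $L\cap K$ spanning $\R^n$, then $M:=\spanZ(L\cap K)$ satisfies $[L:M]\leq n!$.} To prove the lemma, first note $L\cap K$ spanning forces $\lambda_n(K,L)\leq 1$, so choosing linearly independent $v_1,\dots,v_n\in L$ realizing the successive minima $\lambda_1\leq\dots\leq\lambda_n$ of $(K,L)$ we get $v_i\in\lambda_i K\subseteq K$ (using $0\in K$, $\lambda_i\leq 1$), hence $v_i\in L\cap K\subseteq M$. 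Therefore $\lambda_i K$ contains $i$ linearly independent vectors of $M$, so $\lambda_i(K,M)\leq\lambda_i(K,L)$, while $M\subseteq L$ gives the reverse inequality; thus $K$ has the \emph{same} successive minima with respect to $M$ and to $L$. Applying the lower bound in Minkowski's second theorem to the lattice $M$ gives $\lambda_1\cdots\lambda_n\,\Vol(K)\geq\frac{2^n}{n!}\det(M)$, and the upper bound applied to $L$ gives $\lambda_1\cdots\lambda_n\,\Vol(K)\leq 2^n\det(L)$; dividing yields $\det(M)\leq n!\det(L)$, i.e.\ $[L:M]\leq n!$.

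Combining everything bounds the number of jumps by $d+\sum_{\rho=1}^{d}\lfloor\log_2\rho!\rfloor$, which is at most the claimed $d+1+\sum_{\ell=1}^{d}\lfloor\log_2\ell!\rfloor$. The key point — and the step I expect to require the most care — is the coincidence of the successive minima of $K$ with respect to $L$ and with respect to $M=\spanZ(L\cap K)$, since this is exactly what lets us pit the two directions of Minkowski's second theorem against each other for two different lattices; the reduction to a bounded body and the separation of rank jumps from index jumps are routine in comparison, though one should be slightly careful with the edge cases $\Lambda_1=\{0\}$, empty $K_n$, and $K_n$ neither open nor closed.
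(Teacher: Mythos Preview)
Your proof is correct and follows essentially the same approach as the paper's: separate rank jumps (at most $d$) from index jumps, and bound the latter at each fixed rank $\rho$ by $\lfloor\log_2\rho!\rfloor$ using that the two directions of Minkowski's second theorem, applied to two lattices sharing the same successive minima for $K_m$, control the index. The only cosmetic differences are that the paper compares $\Lambda_{\min I_\rho}$ with $\Lambda_{\max I_\rho}$ rather than with $L_\rho=\Lambda\cap W_\rho$, and that your explicit reduction to a bounded polytope before invoking Minkowski is a point the paper glosses over.
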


\begin{remark}
By Stirling's formula, the upper bound appearing here is asymptotic to $\frac{1}{2}d^2 \log_2 d$ as $d\to \infty$. We have not investigated the optimality of this bound.
\end{remark}

We will deduce this theorem as a consequence of Minkowski's second theorem \cite[p. 203]{MR1434478}, which states that if $\Lambda$ is a lattice in $\R^d$ with $d\geq 1$ and $K$ is a non-empty, symmetric convex subset of $\R^d$ then
\begin{equation}
\label{eq:Minkowski}
 1 \leq \frac{\operatorname{vol}(\R^d/\Lambda)}{2^d \operatorname{vol}(K) \prod_{i=1}^d \lambda_i(\Lambda,K)} \leq d!,
\end{equation}
where 
\[
\lambda_i(\Lambda,K)=\inf\bigl\{\lambda>0: \lambda K \cap \Lambda \text{ contains at least $i$ linearly independent vectors}\bigr\}
\]
for each $1\leq i \leq d$. 
For our purposes, the most important feature of \eqref{eq:Minkowski} is that the expression $2^d \operatorname{vol}(K) \prod_{i=1}^d \lambda_i(\Lambda,K)$ is determined by $K$ and $\Lambda \cap K$ whenever $\Lambda \cap K$ has real span equal to $\R^d$. Indeed, if $\Lambda$ is a lattice in $\R^d$ then every fundamental domain for $\Lambda$ has volume $\operatorname{vol}(\R^d/\Lambda)$ and if $\Lambda_1 \subseteq \Lambda_2$ are two lattices in $\R^d$ then $\Lambda_1$ is a finite-index subgroup of $\Lambda_2$ with index
\[
[\Lambda_2:\Lambda_1] = \frac{\operatorname{vol}(\R^d/\Lambda_1)}{\operatorname{vol}(\R^d/\Lambda_2)}.
\] Thus, if $\Lambda_1 \subseteq \Lambda_2$ are two lattices and $K$ is a symmetric convex set such that $\Lambda_1 \cap K$ and $\Lambda_2 \cap K$ are equal and both have real linear span equal to $\R^d$ then Minkowski's second theorem implies that~$[\Lambda_2:\Lambda_1] \leq d!$.

\begin{proof}[Proof of \cref{thm:Abelian_convex}]
It suffices to prove that if $K_1\subseteq K_2\subseteq \cdots \subseteq K_n$ is an increasing sequence of non-empty, symmetric convex subsets of $\R^d$ and $\Lambda$ is a discrete subgroup of $\R^d$ such that the subgroups $\Lambda_i:=\spanZ( \Lambda \cap K_i)$ satisfy $\Lambda_{i+1}\neq \Lambda_i$ for every $1\leq i < n$ then $n \leq d+1+\sum_{\ell=0}^d \lfloor \log_2 \ell! \rfloor$. 
We may also assume without loss of generality that $\Lambda=\Lambda_n$ is a lattice in $\R^d$, replacing $\Lambda$ with $\Lambda_n$ and $\R^d$ with the subspace spanned by $\Lambda_n$ otherwise.
Fix such a pair $\Lambda$ and $(K_1,\ldots,K_n)$ and for each $0\leq \ell \leq d$ let $i_\ell$ be minimal such that the real span of $\Lambda_{i_\ell}$ has dimension at least $\ell$, setting $i_{d+1}=n+1$ for notational convenience. 
Since $\Lambda_2\neq \Lambda_1$, we must have that $i_0=1$ and $i_1 \in \{1,2\}$. For each $1\leq \ell \leq d$ define $V_\ell$ to be the real span of $\Lambda_{i_\ell}$, so that $\Lambda_j$ is a lattice in $V_\ell$ for every $1\leq \ell \leq d$ and $i_\ell \leq j <i_{\ell+1}$. (Note that $V_\ell$ might have dimension strictly larger than $\ell$, in which case $i_{\ell+1}=i_\ell$.) Suppose that $0\leq \ell \leq d$ is such that $i_{\ell+1} \geq i_\ell + 2$. In this case, the subgroups $\Lambda_{i_\ell}$ and $\Lambda_{i_{\ell+1}-1}$ are both lattices in $V_\ell$ with $\Lambda_{i_\ell} \cap K_{i_\ell}=\Lambda_{i_{\ell+1}-1} \cap K_{i_\ell}$ and with $\Lambda_{i_\ell}\cap K_{i_\ell}$ having real span equal to $V_\ell$. As such, it follows by Minkowski's second theorem that
\[
[ \Lambda_{i_{\ell+1}-1}: \Lambda_{i_\ell}] = \frac{\operatorname{vol}(V_\ell/\Lambda_{i_\ell})}{\operatorname{vol}(V_\ell/\Lambda_{i_{\ell+1}-1})} \leq \ell!
\]
for each $1\leq \ell \leq d$ such that $i_{\ell+1} \geq i_\ell+2$.
Now, using that
\[
[ \Lambda_{i_{\ell+1}-1}: \Lambda_{i_\ell}] = \prod_{j=i_\ell}^{i_{\ell+1}-2} [ \Lambda_{j+1}: \Lambda_{j}] \geq 2^{i_{\ell+1}-1-i_\ell}
\]
it follows that $i_{\ell+1}-i_\ell \leq 1+\lfloor\log_2 \ell! \rfloor$
for every $1\leq \ell \leq d$ such that $i_{\ell+1} \geq i_\ell+2$. Since the same inequality also holds trivially when $i_{\ell+1} < i_\ell+2$, it follows that
\[
n = i_{d+1}-i_0 = \sum_{\ell=0}^d (i_{\ell+1}-i_\ell) \leq d+1 + \sum_{\ell=1}^d \lfloor \log_2 \ell!\rfloor
\]
as claimed.
\end{proof}

\section{Additive and multiplicative subgroups of nilpotent Lie algebras}
\label{sec:nilpotent_lattices}

Our goal in this section is to clarify the relationship between lattices in a simply connected nilpotent Lie group and its associated Lie algebra.
As mentioned above, closed, connected subgroups of a simply connected nilpotent Lie group $G$ are in bijection with Lie subalgebras of the Lie algebra $\mathfrak{g}$, which are precisely the closed, connected, bracket-closed additive subgroups of $\mathfrak{g}$. Without the assumption of connectivity, the exponential map need not interact this nicely with the subgroup structure of $G$: It is possible to have subgroups $H$ of $G$ for which $\log H$ is not an additive subgroup of $\mathfrak{g}$ and to have bracket-closed additive subgroups of $\mathfrak{g}$ whose image under the exponential is not a subgroup of $G$. 

\begin{example}
\label{ex:heisenberg_subgroups}
Let the Heisenberg group $H$ and its Lie algebra $\mathfrak{h}$ be as in \Cref{ex:heisenberg_def}. The set of all elements of $H$ whose matrix entries are integers is a lattice in $H$ whose logarithm is given by
\[
\log \left(\begin{matrix} 1 & \Z & \Z \\ 0 & 1 & \Z \\ 0&0&1\end{matrix}\right) = \left\{ \left(\begin{matrix} 0 & a & c \\ 0 & 0 & b \\ 0&0&0\end{matrix}\right) : a,b\in \Z,\, c \in \frac{1}{2}\Z,\, 2c=ab \text{ mod } 2 \right\}.
\]
This is \emph{not} an additive subgroup of $\mathfrak{h}$ since
\[
\left(\begin{matrix} 0 & 1 & 0 \\ 0 & 0 & 0 \\ 0&0&0\end{matrix}\right) + \left(\begin{matrix} 0 & 0 & 0 \\ 0 & 0 & 1 \\ 0&0&0\end{matrix}\right) = \left(\begin{matrix} 0 & 1 & 0 \\ 0 & 0 & 1 \\ 0&0&0\end{matrix}\right) \notin \log \left(\begin{matrix} 1 & \Z & \Z \\ 0 & 1 & \Z \\ 0&0&1\end{matrix}\right).
\]
 Similarly, the set of all elements of $\mathfrak{h}$ whose matrix entries are integers is a bracket-closed additive subgroup of $\mathfrak{h}$ whose exponential is not a subgroup of $H$.
\end{example}


We call a subgroup $H$ of $G$ \textbf{harmonious} if $\log H$ is an additive subgroup of $\mathfrak{g}$ that is bracket-closed in the sense that $[\log x, \log y]\in \log H$ for every $x,y\in H$. (This terminology is not standard.) Thus, harmonious subgroups of $G$ are those that most closely mimic the behaviour of the connected subgroups of $G$ under the exponential map. We call a lattice in $G$ that is also a harmonious subgroup of $G$ a \textbf{harmonious lattice} in $G$, noting that if $\Gamma$ is a harmonious lattice in $G$ then $\Lambda=\log \Gamma$ is a bracket-closed lattice in $\mathfrak{g}$.

\medskip

The remainder of this section is devoted to proving the following theorem, which states intuitively that subgroups of $G$ and bracket-closed additive subgroups of $\mathfrak{g}$ are, in some sense, ``equivalent up to bounded index''.  This result allows us to deduce various statements about lattices in simply connected nilpotent Lie groups from analogous statements for vector spaces (i.e., statements in the geometry of numbers), which are classical. Note that the theorem does not require $\Gamma$ to be discrete.
Recall that if $A \subseteq \mathfrak{g}$ and $\lambda \in \R$ then we define $\lambda \cdot A = \{\lambda a:a\in A\}$, so that if $\Lambda$ is an additive subgroup of $\mathfrak{g}$ then $(m\lambda) \cdot \Lambda \subseteq \lambda \cdot \Lambda$ for every $\lambda\in \R$ and $m\in \Z$. We also write $\mathscr{B}(A)$ for the smallest bracket-closed set containing $A$.

\begin{thm}
\label{thm:harmonious}
Let $G$ be a simply connected nilpotent Lie group of step $s$ with Lie algebra $\mathfrak{g}$. There exist  positive integers $C_1$ and $C_2$ depending only on $s$ such that if $\Gamma$ is a subgroup of $G$ then the sets
\[\mathscr{H}_-(\Gamma):= \exp\Bigr(C_1\cdot \spanZ(\log \Gamma)\Bigr)
\qquad\text{and}\qquad
\mathscr{H}_+(\Gamma):=\exp\left(C_1 \cdot \mathscr{B}\left(\frac{1}{C_1} \cdot \spanZ (\log \Gamma)\right)\right) \]
are harmonious subgroups of $G$ such that 
\[C_1 \cdot \log \Gamma \subseteq \log \mathscr{H}_-(\Gamma) \subseteq \log \Gamma \subseteq \log \mathscr{H}_+(\Gamma) \subseteq \frac{1}{C_2} \cdot \log \Gamma.\]
\end{thm}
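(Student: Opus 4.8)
\emph{Setup and reduction.} The plan is to push everything onto the Lie algebra $\mathfrak g$ via $\log$, which conjugates the multiplication of $G$ to the Baker--Campbell--Hausdorff product $\diamond$. Since $\Gamma$ is a subgroup, the set $\Lambda:=\log\Gamma\subseteq\mathfrak g$ contains $0$, is closed under $X\mapsto-X$ (the $\diamond$-inverse of $X$), and is closed under $\diamond$; all the difficulty comes from the fact that $\Lambda$ need not be closed under $+$ or under the Lie bracket. I would fix an integer $C_1=C_1(s)$ — divisible at least by every denominator occurring in the degree-$\le s$ terms of the Baker--Campbell--Hausdorff and Zassenhaus formulas, with its precise value pinned down by the estimates below — and reduce the theorem to two claims: (i) \emph{approximate additive closure:} $C_1\cdot\spanZ\Lambda\subseteq\Lambda$; and (ii) \emph{approximate bracket closure:} the iterated Lie brackets of elements of $\tfrac1{C_1}\spanZ\Lambda$ stay within a bounded-denominator multiple of $\spanZ\Lambda$, so that $\mathscr B\!\left(\tfrac1{C_1}\spanZ\Lambda\right)$ is an additive, bracket-closed subgroup with $\spanZ\Lambda\subseteq C_1\mathscr B\!\left(\tfrac1{C_1}\spanZ\Lambda\right)\subseteq\tfrac1{C_2}\cdot\Lambda$ for a suitable $C_2=C_2(s)$.

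\emph{Proving the claims.} These I would prove by induction on the step $s$, filtering $\mathfrak g$ by its lower central series $\mathfrak g=\mathfrak g_1\supseteq\cdots\supseteq\mathfrak g_s\supseteq\mathfrak g_{s+1}=0$. The case $s=1$ is trivial, since then $\diamond=+$, the bracket vanishes, and $\Lambda$ is already an additive subgroup. For the inductive step, let $\mathfrak z=\mathfrak g_s$ (a central ideal, on which $\diamond$ agrees with $+$), set $Z=\exp(\mathfrak z)$, and pass to $\bar G=G/Z$ of step $s-1$, using that $\exp$ intertwines the projections $\mathfrak g\to\mathfrak g/\mathfrak z$ and $G\to\bar G$ so that $\log(\pi(\Gamma))$ is the image of $\Lambda$; the inductive hypothesis then controls $\Lambda$ modulo $\mathfrak z$. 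To correct the $\mathfrak z$-component for (i): given $X,Y\in\Lambda$, the BCH formula writes $(C_1X)\diamond(C_1Y)$ as $C_1(X+Y)$ plus a $\diamond$-product of iterated BCH-commutators of $C_1X$ and $C_1Y$, and the divisibility built into $C_1$ makes each such correction an integer multiple of an ordinary iterated Lie bracket; these brackets lie in $\mathfrak g_2$ and deeper, so I would absorb them into $\Lambda$ one layer of the lower central series at a time, using the inductive hypothesis together with $nW=W\diamond\cdots\diamond W\in\Lambda$. For (ii) I would instead start from the identity $X\diamond Y\diamond(-X)\diamond(-Y)=[X,Y]+(\text{terms of Lie-degree}\ge 3)\in\Lambda$: clearing denominators and peeling off the higher-degree corrections layer by layer yields a bounded integer multiple of $[X,Y]$ inside $\Lambda$, hence $C_1[\Lambda,\Lambda]\subseteq\Lambda$; bilinearity of the bracket upgrades this to $C_1[\spanZ\Lambda,\spanZ\Lambda]\subseteq\spanZ\Lambda$, and iterating handles deeper brackets and controls $\mathscr B(\tfrac1{C_1}\spanZ\Lambda)$.

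\emph{Deducing the theorem.} Granting (i) and (ii), the rest is formal. The set $C_1\spanZ\Lambda$ is an additive subgroup; by (ii) it is bracket-closed, since $[C_1\spanZ\Lambda,C_1\spanZ\Lambda]=C_1^2[\spanZ\Lambda,\spanZ\Lambda]\subseteq C_1\spanZ\Lambda$; and an additive, bracket-closed subgroup all of whose ``BCH denominators'' divide $C_1$ is automatically $\diamond$-closed, because expanding $X\diamond Y$ leaves only terms of the form $\tfrac1d[\text{iterated bracket of }X,Y]$, each an integer multiple of an element of the group. Hence $\mathscr H_-(\Gamma)=\exp(C_1\spanZ\Lambda)$ is a harmonious subgroup, and likewise $\mathscr H_+(\Gamma)=\exp\!\left(C_1\,\mathscr B(\tfrac1{C_1}\spanZ\Lambda)\right)$ using (ii). The chain of inclusions then reads: $C_1\Lambda\subseteq C_1\spanZ\Lambda=\log\mathscr H_-(\Gamma)$ since $\Lambda\subseteq\spanZ\Lambda$; $\log\mathscr H_-(\Gamma)\subseteq\Lambda$ is exactly (i); $\Lambda\subseteq\spanZ\Lambda=C_1\cdot\tfrac1{C_1}\spanZ\Lambda\subseteq C_1\,\mathscr B(\tfrac1{C_1}\spanZ\Lambda)=\log\mathscr H_+(\Gamma)$; and $\log\mathscr H_+(\Gamma)\subseteq\tfrac1{C_2}\Lambda$ is the last part of (ii) combined with (i).

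\emph{Main obstacle.} The crux is the pair of estimates (i) and (ii): that the logarithm of an arbitrary subgroup is additively and bracket-closed up to an index bounded purely in terms of $s$. The difficulty is that the layer-by-layer correction along the lower central series trades an error in $\mathfrak g_j$ for a new one in $\mathfrak g_{j+1}$, so the argument must keep the compounding denominators under control and confirm that a single constant $C_1=C_1(s)$ — independent of $\dim\mathfrak g$ and of $\Gamma$ — works throughout. This uniformity in the dimension is exactly the point, and it is the reason one is forced to argue through the universal (finitely many, degree $\le s$) BCH and Zassenhaus polynomials rather than by direct computation in $\mathfrak g$; getting the bookkeeping of these polynomials right, and checking the lift through the central extension at each step, is where essentially all the work lies.
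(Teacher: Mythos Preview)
Your overall architecture is right and matches the paper: reduce to the two claims (i) $C_1\spanZ\Lambda\subseteq\Lambda$ and (ii) bounded-denominator bracket closure of $\Lambda$, then deduce the harmonious-subgroup statements formally via the fact that an additive, bracket-closed subgroup with the right divisibility is $\diamond$-closed (the paper packages this last step as \cref{lem:additive_to_harmonious}). Your chain of inclusions at the end is also essentially the paper's.

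Where your proposal diverges, and where it has a genuine gap, is in the proof of (i)--(ii). You propose induction on the step $s$ by passing to $\bar G=G/\exp(\mathfrak g_s)$; but the inductive hypothesis only tells you that $C'(X+Y)$ and $C'[X,Y]$ lie in $\Lambda+\mathfrak z$, with \emph{no} control on which element of the $\mathfrak z$-coset you land in. Your plan to ``absorb the corrections into $\Lambda$ one layer at a time'' then runs into a circularity: the Zassenhaus correction $L_i(C_1X,C_1Y)$ is a \emph{sum} of several Lie monomials, and showing such a sum lies in $\Lambda$ is precisely the additive-closure statement you are trying to prove. You flag this bookkeeping as the ``main obstacle'' but do not resolve it; the quotient induction by itself does not.

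The paper cuts this knot with a different, non-inductive input: Lazard's lemma, which says there exist rationals $\alpha_i,\beta_i,\gamma_i,\delta_i$ depending only on $s$ with
\[
\exp(\log x+\log y)=x^{\alpha_1}y^{\beta_1}\cdots x^{\alpha_\ell}y^{\beta_\ell},
\qquad
\exp([\log x,\log y])=x^{\gamma_1}y^{\delta_1}\cdots x^{\gamma_\ell}y^{\delta_\ell}.
\]
Clearing denominators immediately gives $C(\log x+\log y)\in\Lambda$ and $C[\log x,\log y]\in\Lambda$ for any $x,y\in\Gamma$, with no quotient and no need to combine multiple terms. From this the paper builds up control of all Lie monomials by induction on their \emph{degree} (not on the step of $G$), and then feeds this into Zassenhaus to get \cref{lem:good_sublattice}, which is exactly your (i)--(ii). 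So the missing ingredient in your sketch is Lazard's formula (or an equivalent direct expression of sums and brackets as words in rational powers), which is what lets one avoid the circular ``sum of things in $\Lambda$'' step.
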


In \cref{cor:harmonious_bounded_index} we show moreover that if $\Gamma$ is discrete then the harmonious subgroups $\mathscr{H}_-(\Gamma)$ and $\mathscr{H}_+(\Gamma)$ have index 
 bounded by a constant depending only on the step and dimension of $G$.

\begin{example}
We continue to analyze the Heisenberg group as studied in \Cref{ex:heisenberg_def,ex:heisenberg_subgroups}. Although the subgroup $\Gamma$ of $H$ consisting of those elements of $H$ with integer matrix entries is not harmonious, we can write
\[
  \left(\begin{matrix} 1 & 2\Z & \Z \\ 0 & 1 & 2\Z \\ 0&0&1\end{matrix}\right) \subseteq \Gamma = \left(\begin{matrix} 1 & \Z & \Z \\ 0 & 1 & \Z \\ 0&0&1\end{matrix}\right) \subseteq \left(\begin{matrix} 1 & \Z & \frac{1}{2}\Z \\ 0 & 1 & \Z \\ 0&0&1\end{matrix}\right)
\]
with the two outer two lattices being harmonious subgroups of $H$. Moreover, these subgroups arise naturally from the original subgroup $\Gamma$ as
\begin{align*}
\left(\begin{matrix} 1 & 2\Z & \Z \\ 0 & 1 & 2\Z \\ 0&0&1\end{matrix}\right) = \exp\left(\operatorname{span}_\Z (2\cdot \log \Gamma) \right)
\text{and}\qquad 
\left(\begin{matrix} 1 & \Z & \frac{1}{2}\Z \\ 0 & 1 & \Z \\ 0&0&1\end{matrix}\right) = \exp\left(\operatorname{span}_\Z (\log \Gamma)\right).
\end{align*}
Similar remarks apply to the bracket-closed additive lattice in $\mathfrak{h}$ consisting of those elements of $\mathfrak{h}$ with integer matrix entries.
\end{example}

The methods used to prove \cref{thm:harmonious} are based on those of Breuillard and Green \cite{breuillard2011approximate}. In particular, we will make use of the following lemma of Tessera and Tointon \cite{MR3877012} that is also proved using the methods of \cite{breuillard2011approximate}.

\begin{lemma}
\label{lem:additive_to_harmonious}
Let $G$ be a simply connected nilpotent Lie group of step $s$ with Lie algebra $\mathfrak{g}$. There exists an integer constant $C=C(s)$ such that if $\Lambda$ is a bracket-closed additive subgroup of the Lie algebra $\mathfrak{g}$ then $\exp ( C \cdot \Lambda)$ is a harmonious subgroup of $G$.
\end{lemma}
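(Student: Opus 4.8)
The plan is to take $C$ to be the least common multiple of the denominators of the rational coefficients appearing in the Baker--Campbell--Hausdorff Lie polynomials $L_2,\dots,L_s$ (there are only finitely many such coefficients and they depend only on $s$, so $C=C(s)$), and then to verify directly that $\exp(C\cdot\Lambda)$ is harmonious. Since $\exp\colon\mathfrak g\to G$ is a global diffeomorphism when $G$ is simply connected and nilpotent, we have $\log\exp(C\cdot\Lambda)=C\cdot\Lambda$, and $\exp(C\cdot\Lambda)$ is a subgroup of $G$ if and only if $C\cdot\Lambda$ is closed under the BCH product $\diamond$ and under BCH inversion $X\mapsto -X$. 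So it suffices to prove: (i) $C\cdot\Lambda$ is an additive subgroup of $\mathfrak g$; (ii) $C\cdot\Lambda$ is bracket-closed; and (iii) $C\cdot\Lambda$ is closed under $\diamond$.

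Parts (i) and (ii) are immediate from the hypotheses on $\Lambda$. If $X,Y\in\Lambda$ then $CX+CY=C(X+Y)\in C\cdot\Lambda$ because $\Lambda$ is an additive subgroup, and $[CX,CY]=C^2[X,Y]=C\cdot\bigl(C[X,Y]\bigr)\in C\cdot\Lambda$ because $[X,Y]\in\Lambda$ and $\Lambda$, being an additive subgroup, is closed under multiplication by the integer $C$. Also $0\in C\cdot\Lambda$ and $-(CX)=C(-X)\in C\cdot\Lambda$, which handles the identity and inversion once (iii) is known.

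The content is (iii). Fix $X,Y\in\Lambda$. Using the BCH formula, which terminates at degree $s$ since $G$ is $s$-step nilpotent, together with the fact that $L_i$ is homogeneous of degree $i$, we obtain
\[
CX\diamond CY=\sum_{i=1}^{s} L_i(CX,CY)=\sum_{i=1}^{s} C^{i}\,L_i(X,Y),
\]
so that $\tfrac1C\bigl(CX\diamond CY\bigr)=\sum_{i=1}^{s}C^{i-1}L_i(X,Y)$, and it is enough to show this lies in $\Lambda$. The $i=1$ term is $C^0 L_1(X,Y)=X+Y\in\Lambda$. For $2\le i\le s$, expand $L_i(X,Y)=\sum_j q_{i,j}\,M_{i,j}(X,Y)$, where each $M_{i,j}$ is an iterated Lie bracket of degree $i$ in $X$ and $Y$ and each $q_{i,j}\in\Q$ has denominator dividing $C$ by the choice of $C$. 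Then $C^{i-1}q_{i,j}=C^{i-2}\cdot(Cq_{i,j})$ is an integer (here we use $i\ge 2$, so $C^{i-2}$ is a non-negative integer power of $C$), while $M_{i,j}(X,Y)\in\Lambda$ because $\Lambda$ is bracket-closed; hence $C^{i-1}q_{i,j}M_{i,j}(X,Y)\in\Lambda$. Summing over $i$ and $j$ gives $\tfrac1C(CX\diamond CY)\in\Lambda$, i.e. $CX\diamond CY\in C\cdot\Lambda$. Combined with (i), (ii), and the remarks on inversion, this shows that $\exp(C\cdot\Lambda)$ is a harmonious subgroup of $G$.

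The argument is not hard; the only things to be careful about are the use of homogeneity to pull the scalar out of $L_i$ as the power $C^i$, and the observation that $L_2,\dots,L_s$ involve only finitely many rational coefficients, whose denominators are therefore bounded in terms of $s$ alone — this is exactly what makes $C$ depend only on the step $s$. This is essentially the kind of BCH bookkeeping carried out in \cite{breuillard2011approximate}; one could alternatively quote results from there, but the direct computation above is self-contained.
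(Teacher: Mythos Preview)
Your proof is correct. The paper does not actually prove this lemma but simply cites \cite[Lemma 4.3]{MR3877012} (Tessera--Tointon) and adds the one-line observation that $C\cdot\Lambda$ is bracket-closed; your argument is the direct BCH bookkeeping that underlies that cited result, so the approaches are the same in substance, with yours being self-contained rather than a citation.
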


\begin{proof}
This is essentially \cite[Lemma 4.3]{MR3877012}. It is not stated that $C\cdot \Lambda$ is bracket-closed, but this is obvious since $[C \cdot\Lambda,C\cdot\Lambda]= C^2 \cdot [\Lambda,\Lambda] \subseteq C\cdot \Lambda$.
\end{proof}

\begin{remark}\label{rmk:C_divides_C1}
The constant $C_1$ appearing in \cref{thm:harmonious} will be taken to be a multiple of the constant $C$ appearing in \cref{lem:additive_to_harmonious}. This will be important in the proof of \cref{prop:exploring_Carnot_subgroup}.
\end{remark}

We begin by stating the following lemma of Lazard \cite{lazard1954groupes} as presented in \cite[Lemmas 5.2 and 5.3]{breuillard2011approximate}. This lemma was first applied to the structure theory of approximate groups in the work of Fisher, Katz, and Peng \cite{fisher2009freiman}. Given a simply connected nilpotent Lie group $G$, we define $x^\alpha=\exp(\alpha \log x)$ for every $x\in G$ and $\alpha \in \R$.

\begin{lemma}[Lazard]
Let $G$ be a simply connected nilpotent Lie group of step $s$ with Lie algebra $\mathfrak{g}$. There exists $\ell\geq 1$ and sequences of rational numbers $\alpha_1,\ldots,\alpha_\ell$, $\beta_1,\ldots,\beta_\ell$, $\gamma_1,\ldots,\gamma_\ell$, and $\delta_1,\ldots,\delta_\ell$ depending only on $s$ such that
\[
\exp(\log x+\log y)=x^{\alpha_1}y^{\beta_2}\cdots x^{\alpha_\ell} y^{\beta_\ell}
\]
and
\[
\exp([\log x,\log y])=x^{\gamma_1}y^{\delta_1}\cdots x^{\gamma_\ell} y^{\delta_\ell}
\]
for every $x,y \in G$.
\end{lemma}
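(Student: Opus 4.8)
The plan is to move everything into the Lie algebra and reconstruct Lazard's identities by induction on the step $s$. Since $G\cong(\mathfrak g,\diamond)$ via $\exp$, which is a bijection as $G$ is simply connected and nilpotent, and since $x^\alpha=\exp(\alpha\log x)$ corresponds to $\alpha X$ under $x\leftrightarrow X=\log x$ while $\exp(A)\exp(B)=\exp(A\diamond B)$, the desired identities are equivalent to
\[
X+Y=(\alpha_1 X)\diamond(\beta_1 Y)\diamond\cdots\diamond(\alpha_\ell X)\diamond(\beta_\ell Y)
\qquad\text{and}\qquad
[X,Y]=(\gamma_1 X)\diamond(\delta_1 Y)\diamond\cdots\diamond(\gamma_\ell X)\diamond(\delta_\ell Y)
\]
for all $X,Y$. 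Call an expression like those on the right a \emph{$\diamond$-word} in $X,Y$ with rational exponents, and note that the $\diamond$-inverse $-Z$ of a $\diamond$-word $Z$ is again a $\diamond$-word (reverse the factors and negate the exponents). By the universal property of the free step-$s$ nilpotent Lie algebra $\mathfrak f=\mathfrak f_{s,\{X,Y\}}$ it suffices to prove these identities in $\mathfrak f$, and I will freely use its grading $\mathfrak f=\bigoplus_{j=1}^s\mathfrak f^{(j)}$ by Lie-monomial degree (the stratification of $\mathfrak f$), for which $\mathfrak g_k=\bigoplus_{j\ge k}\mathfrak f^{(j)}$; in particular $\mathfrak g_s=\mathfrak f^{(s)}$ is central and $\diamond$ restricts to $+$ on it.

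The engine of the argument is an ``exact commutator'' estimate. Using the BCH expansion $\log[e^Ae^Be^{-A}e^{-B}]=[A,B]+(\text{Lie polynomials in }A,B\text{ of degree}\ge3)$ together with $[\mathfrak g_i,\mathfrak g_j]\subseteq\mathfrak g_{i+j}$, one checks that if $Z_1,Z_2$ are $\diamond$-words whose lowest-degree homogeneous parts are $N_1,N_2$, homogeneous of degrees $d_1,d_2$, then
\[
Z_1\diamond Z_2\diamond(-Z_1)\diamond(-Z_2)=[N_1,N_2]+(\text{an element of }\mathfrak g_{d_1+d_2+1}),
\]
because every BCH term beyond the leading $[A,B]$ involves at least two copies of one of $A=Z_1$, $B=Z_2$ and therefore, by $[\mathfrak g_i,\mathfrak g_j]\subseteq\mathfrak g_{i+j}$, lands in $\mathfrak g_{d_1+d_2+1}$. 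Feeding this into an induction on degree yields the \emph{approximate realization} of monomials: for every $d$, every Lie monomial $M$ of degree $d$ in $X,Y$, and every rational $c$, there is a rational $\diamond$-word $Z$ with $Z-cM\in\mathfrak g_{d+1}$ --- the base case being the single-letter word $(cX)$ or $(cY)$, and the inductive step being to write $M=[M_1,M_2]$, approximately realize $cM_1$ and $M_2$, and apply the estimate. When $d=s$ we have $\mathfrak g_{s+1}=0$, so this realization is exact; and since $\mathfrak f^{(s)}$ is central, concatenating such words shows that every homogeneous rational Lie polynomial of degree $s$ in $X,Y$ is realized exactly by a universal rational $\diamond$-word.

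Now induct on $s$. For $s=1$, $G$ is abelian and $(X)\diamond(Y)=X+Y$ while $[X,Y]=0$ is realized by the empty word. For the inductive step, apply the result in step $s-1$ to the quotient $\mathfrak f/\mathfrak f^{(s)}$ to get universal rational $\diamond$-words $W_+,W_b$ with $W_+(X,Y)\equiv X+Y$ and $W_b(X,Y)\equiv[X,Y]$ modulo $\mathfrak g_s$. The errors $E_+:=W_+(X,Y)-(X+Y)$ and $E_b:=W_b(X,Y)-[X,Y]$ lie in $\mathfrak g_s=\mathfrak f^{(s)}$, and as functions of $X,Y$ they are rational homogeneous Lie polynomials of degree $s$ (they are built from $\diamond$, hence are rational Lie polynomials, and their components of degree $<s$ vanish since the errors lie in $\mathfrak f^{(s)}$). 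By the previous paragraph there are rational $\diamond$-words $V_+,V_b$ realizing $-E_+,-E_b$ exactly; since $\mathfrak f^{(s)}$ is central, $W_+\diamond V_+=(X+Y)+E_+-E_+=X+Y$ and likewise $W_b\diamond V_b=[X,Y]$. Padding the two resulting $\diamond$-words to a common length $\ell$ by inserting factors with exponent $0$ then yields the claimed sequences $\alpha_i,\beta_i,\gamma_i,\delta_i$, all rational and depending only on $s$. The main obstacle is the bookkeeping in the commutator estimate and the induction: one must verify carefully, via the grading of $\mathfrak f$ and $[\mathfrak g_i,\mathfrak g_j]\subseteq\mathfrak g_{i+j}$, that every non-leading term produced by a BCH manipulation strictly raises the filtration degree, so that the degree-$s$ corrections never disturb the lower-degree terms already matched, and that rationality and dependence only on $s$ survive throughout.
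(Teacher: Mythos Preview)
Your proof is correct. The paper does not actually prove this lemma: it is stated there as a classical result of Lazard, with a reference to its presentation in Breuillard--Green, and then used as a black box. So there is no ``paper's own proof'' to compare against.

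That said, your reconstruction is exactly the standard argument and is carried out cleanly. The key ingredients --- passing to the free step-$s$ nilpotent Lie algebra on two generators, using its grading so that $\mathfrak g_s=\mathfrak f^{(s)}$ is central and $\diamond$ reduces to $+$ there, the commutator estimate showing that the $\diamond$-commutator of words with leading terms $N_1,N_2$ of degrees $d_1,d_2$ equals $[N_1,N_2]$ modulo $\mathfrak g_{d_1+d_2+1}$, and the induction on step with a degree-$s$ correction --- are all sound. The point you flag as the ``main obstacle'' (that higher BCH terms strictly raise filtration degree) is handled correctly by your observation that every such term contains at least two copies of one argument, combined with $[\mathfrak g_i,\mathfrak g_j]\subseteq\mathfrak g_{i+j}$. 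The rationality and dependence only on $s$ are immediate from working in the free object with the rational BCH series.
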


\begin{corollary}[Expansion of sums]
\label{cor:expansion_of_sums}
Let $G$ be a simply connected nilpotent Lie group of step $s$ with Lie algebra $\mathfrak{g}$. For each $n\geq 2$ there exists $\ell\geq 1$, a sequence of rational numbers $\alpha_1,\ldots,\alpha_\ell$, and a sequence of indices $i_1,\ldots,i_\ell \in \{1,\ldots,n\}$, all depending only on $s$ and $n$, such that
\[
\exp(\log x_1+\log x_2 + \cdots + \log x_n )=x^{\alpha_1}_{i_1} x^{\alpha_2}_{i_2}\cdots x^{\alpha_\ell}_{i_\ell}
\]
for every $x_1,\ldots,x_n \in G$.
\end{corollary}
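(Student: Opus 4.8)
The plan is to argue by induction on $n \geq 2$. The base case $n = 2$ is exactly the first identity in Lazard's lemma above, with the indices $i_1, i_3, i_5, \ldots$ all equal to $1$ and $i_2, i_4, \ldots$ all equal to $2$. So suppose $n \geq 3$ and that the statement has been established for $n-1$.

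First I would set $y := \exp(\log x_1 + \cdots + \log x_{n-1})$ and apply the first identity of Lazard's lemma to the pair $(y, x_n)$, obtaining rationals $\alpha_1, \beta_1, \ldots, \alpha_\ell, \beta_\ell$ depending only on $s$ with
\[
\exp(\log x_1 + \cdots + \log x_n) = \exp(\log y + \log x_n) = y^{\alpha_1} x_n^{\beta_1} \cdots y^{\alpha_\ell} x_n^{\beta_\ell}.
\]
Next, for each $1 \leq j \leq \ell$, I would rewrite $y^{\alpha_j}$ as a word in $x_1, \ldots, x_{n-1}$. Since $x^\alpha = \exp(\alpha \log x)$ we have $\log(x^\alpha) = \alpha \log x$ and $(x^\alpha)^\beta = x^{\alpha\beta}$, so
\[
\alpha_j \log y = \alpha_j(\log x_1 + \cdots + \log x_{n-1}) = \log(x_1^{\alpha_j}) + \cdots + \log(x_{n-1}^{\alpha_j}),
\]
and hence $y^{\alpha_j} = \exp\bigl(\log(x_1^{\alpha_j}) + \cdots + \log(x_{n-1}^{\alpha_j})\bigr)$. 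Applying the inductive hypothesis to the $(n-1)$-tuple $(x_1^{\alpha_j}, \ldots, x_{n-1}^{\alpha_j})$ yields an integer $m$, indices $k_1, \ldots, k_m \in \{1, \ldots, n-1\}$, and rationals $\gamma_1, \ldots, \gamma_m$, all depending only on $s$ and $n-1$, with
\[
y^{\alpha_j} = (x_{k_1}^{\alpha_j})^{\gamma_1} \cdots (x_{k_m}^{\alpha_j})^{\gamma_m} = x_{k_1}^{\alpha_j \gamma_1} \cdots x_{k_m}^{\alpha_j \gamma_m}.
\]
Substituting these expressions into the displayed product for $\exp(\log x_1 + \cdots + \log x_n)$ and reading off the resulting word in $x_1, \ldots, x_n$ gives the desired identity: the total number of factors is at most $\ell(m+1)$, each index lies in $\{1, \ldots, n\}$, and each exponent is either some $\beta_j$ or a product $\alpha_j \gamma_k$, so all of this data depends only on $s$ and $n$. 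This completes the induction.

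The only real subtlety — and it is routine rather than deep — is the bookkeeping needed to confirm uniformity: one must check that the number of factors, the indices, and all exponents produced along the way depend only on $s$ and $n$, never on $G$ or on the particular elements $x_1, \ldots, x_n$. This holds because Lazard's lemma supplies data depending only on $s$, the inductive hypothesis supplies data depending only on $s$ and $n-1$, and the construction only multiplies rationals and concatenates finitely many words whose lengths are bounded in terms of $s$ and $n$. One could alternatively extract the identity directly from the Zassenhaus or Baker--Campbell--Hausdorff formula, but the inductive argument via Lazard's lemma keeps the exponents rational and makes the uniformity transparent.
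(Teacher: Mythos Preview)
Your proof is correct and follows the natural approach implicit in the paper: the paper states this result as an unproven corollary of Lazard's lemma, and your induction on $n$ is precisely the intended way to fill in the details. The key observation you make---that $y^{\alpha_j}=\exp\bigl(\log(x_1^{\alpha_j})+\cdots+\log(x_{n-1}^{\alpha_j})\bigr)$ so the inductive hypothesis applies with the \emph{same} word data independent of $\alpha_j$---is exactly what is needed, and your verification of uniformity in $s$ and $n$ is accurate.
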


\begin{corollary}
\label{cor:expansion_of_sums2}
Let $G$ be a simply connected nilpotent Lie group of step $s$ with Lie algebra $\mathfrak{g}$, let $\Gamma$ be a subgroup of $G$ and let $\Lambda = \log \Gamma$. For each $n\geq 1$, there exists a natural number $C=C(s,n)$ such that if $X_1,\ldots,X_n \in C\cdot \Lambda$ then $X_1+\cdots+X_n \in \Lambda$.
\end{corollary}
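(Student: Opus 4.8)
The plan is to deduce this directly from \cref{cor:expansion_of_sums}, which rewrites $\exp(\log x_1 + \cdots + \log x_n)$ as a word $x_{i_1}^{\alpha_1}\cdots x_{i_\ell}^{\alpha_\ell}$ in the $x_j$ and their fractional powers, where $\ell$, the rationals $\alpha_1,\ldots,\alpha_\ell$, and the indices $i_1,\ldots,i_\ell\in\{1,\ldots,n\}$ depend only on $s$ and $n$. Were all the exponents $\alpha_k$ integers, this word would automatically lie in any subgroup of $G$ containing $x_1,\ldots,x_n$; the role of the dilation constant $C$ is exactly to clear the denominators of the $\alpha_k$ and thereby turn these fractional powers into genuine group powers.

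First I would dispose of the case $n=1$, which is not covered by \cref{cor:expansion_of_sums}: here any $C\geq 1$ works, since if $X_1 = C\log\gamma$ with $\gamma\in\Gamma$ then $X_1 = \log(\gamma^C)\in\log\Gamma=\Lambda$ because $\gamma^C\in\Gamma$. For $n\geq 2$, apply \cref{cor:expansion_of_sums} to obtain $\ell$, the $\alpha_k$, and the $i_k$, and choose $C=C(s,n)$ to be any positive integer with $C\alpha_k\in\Z$ for every $1\leq k\leq\ell$, for instance a common denominator of $\alpha_1,\ldots,\alpha_\ell$. Now suppose $X_1,\ldots,X_n\in C\cdot\Lambda$, say $X_j=C\log\gamma_j$ with $\gamma_j\in\Gamma$, and set $x_j:=\gamma_j^{\,C}$, which lies in $\Gamma$ and (since $C\in\N$, so that this power agrees with $\exp(C\log\gamma_j)$) satisfies $\log x_j=C\log\gamma_j=X_j$. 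Then \cref{cor:expansion_of_sums} gives
\[
\exp(X_1+\cdots+X_n)=\exp(\log x_1+\cdots+\log x_n)=x_{i_1}^{\alpha_1}\cdots x_{i_\ell}^{\alpha_\ell},
\]
and for each $k$ we have $x_{i_k}^{\alpha_k}=\exp(\alpha_k\log x_{i_k})=\exp\bigl((C\alpha_k)\log\gamma_{i_k}\bigr)=\gamma_{i_k}^{\,C\alpha_k}$, which is a group power with integer exponent of an element of $\Gamma$ and hence lies in $\Gamma$. Therefore $\exp(X_1+\cdots+X_n)\in\Gamma$, and since $\exp$ is a bijection with inverse $\log$ (as $G$ is simply connected and nilpotent) we conclude that $X_1+\cdots+X_n=\log\bigl(\exp(X_1+\cdots+X_n)\bigr)\in\log\Gamma=\Lambda$, as required.

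I do not anticipate a real obstacle here. The only points needing care are: (i) that for $C\in\N$ the power $x^C=\exp(C\log x)$ coincides with the ordinary group power, so that $\Gamma$ is closed under it; (ii) that a single positive integer $C$ can be chosen to clear the denominators of all of the finitely many rationals $\alpha_k$ produced by \cref{cor:expansion_of_sums}, which is where the dependence $C=C(s,n)$ enters; and (iii) the separate (and trivial) treatment of $n=1$. One may additionally take $C$ to be a multiple of any prescribed integer without disturbing the argument, which is sometimes convenient when this corollary is applied.
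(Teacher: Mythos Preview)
Your proof is correct and follows exactly the paper's approach: choose $C$ to be a common denominator of the rationals $\alpha_1,\ldots,\alpha_\ell$ from \cref{cor:expansion_of_sums}, so that each $x_{i_k}^{\alpha_k}=\gamma_{i_k}^{C\alpha_k}\in\Gamma$. The paper's proof is the single sentence ``Let $C=C(s,n)$ be the least common multiple of the denominators of the numbers $\alpha_1,\ldots,\alpha_\ell$ appearing in \cref{cor:expansion_of_sums} when written in reduced form''; you have simply spelled out the details, and your separate treatment of $n=1$ (which the paper omits since \cref{cor:expansion_of_sums} is stated for $n\geq 2$) is a welcome clarification.
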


\begin{proof}
Let $C=C(s,n)$ be the least common multiple of the denominators of the numbers $\alpha_1,\ldots,\alpha_\ell$ appearing in \cref{cor:expansion_of_sums} when written in reduced form.
\end{proof}

\begin{lemma}
\label{lem:multlilinear_monomials}
Let $G$ be a simply connected nilpotent Lie group of step $s$ with Lie algebra $\mathfrak{g}$, let $\Gamma$ be a subgroup of $G$ and let $\Lambda = \log \Gamma$. There exists a natural number $C=C(s)$ such that
\[
\Bigl\{L(X_1,\ldots,X_n) : X_1,\ldots,X_n \in C^{n-1} \cdot \Lambda \Bigr\} \subseteq \Lambda.
\]
for every multilinear Lie monomial $L$. 
\end{lemma}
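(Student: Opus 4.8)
The plan is to induct on the degree $n$ of the multilinear Lie monomial $L$ (which equals its number of variables), peeling off the outermost Lie bracket and converting it into a group identity via Lazard's lemma. I would take $C=C(s)$ to be the least common multiple of the denominators, in reduced form, of the rational numbers $\gamma_1,\dots,\gamma_\ell,\delta_1,\dots,\delta_\ell$ supplied by Lazard's lemma for step $s$, so that $\exp([\log x,\log y])=x^{\gamma_1}y^{\delta_1}\cdots x^{\gamma_\ell}y^{\delta_\ell}$ for all $x,y\in G$ and $\gamma_i C^m,\delta_i C^m\in\Z$ for every $i$ and every integer $m\ge 1$. The base case $n=1$ is immediate since then $X_1\in C^0\cdot\Lambda=\Lambda$.

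For the inductive step with $n\ge 2$, write $L=[L_1,L_2]$ where $L_1,L_2$ are multilinear Lie monomials in complementary nonempty subsets of the variables, of degrees $k$ and $n-k$ with $1\le k\le n-1$; relabel so $L_1$ is in $X_1,\dots,X_k$ and $L_2$ in $X_{k+1},\dots,X_n$. Writing $X_i=C^{n-1}W_i$ with $W_i\in\Lambda$, I would regroup the factors of $C$ so as to feed arguments of the right ``depth'' into each half: since $X_i=C^{n-k}(C^{k-1}W_i)$ for $i\le k$, multilinearity (homogeneity of degree $k$) of $L_1$ together with the inductive hypothesis gives $A:=L_1(X_1,\dots,X_k)=C^{(n-k)k}L_1(C^{k-1}W_1,\dots,C^{k-1}W_k)\in C^{(n-k)k}\cdot\Lambda$, and symmetrically, using $X_i=C^{k}(C^{n-k-1}W_i)$ for $i>k$, we get $B:=L_2(X_{k+1},\dots,X_n)\in C^{k(n-k)}\cdot\Lambda$. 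The crucial observation is that $m:=k(n-k)\ge 1$ because $k$ and $n-k$ are both positive, so $A=C^mA'$ and $B=C^mB'$ with $A',B'\in\Lambda$ and a full surplus factor $C^m$ available.

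Finally, setting $a=\exp A=(\exp A')^{C^m}\in\Gamma$ and $b=\exp B=(\exp B')^{C^m}\in\Gamma$, each $a^{\gamma_i}=\exp(\gamma_i A)=(\exp A')^{\gamma_i C^m}$ and $b^{\delta_i}=(\exp B')^{\delta_i C^m}$ is a genuine integer power of an element of $\Gamma$, hence lies in $\Gamma$; so Lazard's identity $\exp([A,B])=\exp([\log a,\log b])=a^{\gamma_1}b^{\delta_1}\cdots a^{\gamma_\ell}b^{\delta_\ell}\in\Gamma$ shows $L(X_1,\dots,X_n)=[A,B]=\log(a^{\gamma_1}b^{\delta_1}\cdots a^{\gamma_\ell}b^{\delta_\ell})\in\log\Gamma=\Lambda$, completing the induction. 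The only real subtlety is the depth bookkeeping in the second paragraph: one must check that splitting the $C^{n-1}$ on each original argument across $L_1$ and $L_2$ always leaves at least one extra factor of $C$ on both $A$ and $B$, so that Lazard's rational exponents become integral; everything else is routine, and in particular (unlike the later lemmas concerning Lie \emph{polynomials}) no appeal to \cref{cor:expansion_of_sums2} is needed here since a monomial involves no addition.
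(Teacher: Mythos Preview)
Your proof is correct and follows essentially the same approach as the paper: induct on the degree, peel off the outermost bracket, use multilinearity together with the inductive hypothesis to place each half in $C\cdot\Lambda$, and then invoke Lazard's identity to conclude that the bracket lies in $\Lambda$. The only cosmetic difference is in the power bookkeeping: you extract $C^{n-k}$ from each of the $k$ arguments of $L_1$ to land $A$ in $C^{k(n-k)}\cdot\Lambda$, whereas the paper extracts a single $C$ from each argument (feeding $C^{-1}X_i\in C^{n-2}\cdot\Lambda$ into the inductive hypothesis) to land $A$ in $C^{k}\cdot\Lambda$; either surplus is at least $C^1$, which is all that is needed.
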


(A Lie monomial $L(X_1,\ldots,X_n)$ is multilinear when each variable appears at most once.)

\begin{proof}
Let $C=C(s)$ be the least common multiple of the denominators of the numbers $\gamma_1,\ldots,\gamma_\ell$ and $\delta_1,\ldots,\delta_\ell$ appearing in Lazard's lemma when written in reduced form. We will prove the claim by induction on $n$, the case $n=1$ being vacuous. If $n>1$ then 
\[
L(X_1,\ldots,X_n)=[L_1(X_{\pi(1)},\ldots,X_{\pi(j)}),L_2(X_{\pi(j+1)},\ldots,X_{\pi(n)})] 
\]
for some $1\leq j < n$, some multilinear Lie monomials $L_1,L_2$ and some permutation $\pi:\{1,\ldots,n\}\to\{1,\ldots,n\}$. We may assume without loss of generality that $\pi$ is the identity permutation. By the induction hypothesis, if $X_1,\ldots,X_n\in C^{n-2} \cdot \Lambda$ then $L_1(X_1,\ldots,X_j),L_2(X_{j+1},\ldots,X_n)\in \Lambda$. As such, if $X_1,\ldots,X_n \in C^{n-1} \cdot \Lambda$ then we have by multlilinearity that
\[
L_1(X_1,\ldots,X_j)=C^jL_1(C^{-1}X_1,\ldots,C^{-1}X_j)\in C^{j} \cdot \Lambda \subseteq C \cdot \Lambda \] and  \[L_2(X_{j+1},\ldots,X_n)=C^{n-j}L_2(C^{-1}X_{j+1},\ldots,C^{-1}X_n)\in C^{n-j} \cdot \Lambda \subseteq C \cdot \Lambda.
\]
On the other hand, if $X,Y\in C\cdot \Lambda$ then $e^{\gamma_j X},e^{\delta_j Y} \in \Gamma$ for every $1\leq j \leq \ell$, so that
\[
\exp([X,Y])=e^{\gamma_1 X} e^{\delta_1 Y} \cdots e^{\gamma_\ell X} e^{ \delta_\ell Y} \in \Gamma
\]
and hence that $[X,Y] \in \Lambda$. It follows that if $X_1,\ldots,X_n \in C^{n-1} \cdot \Lambda$ then $L(X_1,\ldots,X_n)\in \Lambda$ as claimed.
\end{proof}

\begin{corollary}
\label{cor:multlilinear_monomials}
Let $G$ be a simply connected nilpotent Lie group of step $s$ with Lie algebra $\mathfrak{g}$, let $\Gamma$ be a subgroup of $G$ and let $\Lambda = \log \Gamma$. There exists a natural number $C=C(s)$ such that
\[
\Bigl\{L(X_1,\ldots,X_n) : X_1,\ldots,X_n \in \Lambda,\; \text{ $X_i\in C^{d(d-1)}\cdot \Lambda$ for some $1\leq i \leq n$} \Bigr\} \subseteq \Lambda.
\]
for every Lie monomial $L$ of degree $d$ that depends on every variable. 
\end{corollary}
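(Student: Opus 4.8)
The plan is to deduce \cref{cor:multlilinear_monomials} from \cref{lem:multlilinear_monomials} by ``polarizing'' a general degree-$d$ Lie monomial into a linear combination of multilinear Lie monomials. Given a Lie monomial $L$ of degree $d$ depending on every one of its $n$ variables, think of $L(X_1,\ldots,X_n)$ as built from a word in the letters $X_1,\ldots,X_n$ in which $X_i$ appears $d_i\geq 1$ times, with $\sum_i d_i = d$. First I would introduce, for each variable $X_i$ that appears with multiplicity $d_i\geq 2$, fresh formal variables $Y_{i,1},\ldots,Y_{i,d_i}$, and let $\widetilde L$ be the multilinear Lie monomial of degree $d$ obtained from $L$ by replacing the $j$-th occurrence of $X_i$ by $Y_{i,j}$. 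Then $L(X_1,\ldots,X_n)$ is recovered by setting every $Y_{i,j}=X_i$. The key algebraic fact (a standard inclusion–exclusion / finite-difference identity for multilinear functions) is that substituting $Y_{i,j}=X_i$ in a \emph{multilinear} expression can be written as a $\pm 1$-linear combination of substitutions in which, for each $i$, all the $Y_{i,j}$ are set equal to a \emph{sum} $\sum_{j\in T} X_i$ over subsets $T$; more precisely, using the elementary identity expressing the ``diagonal'' value of a multilinear form as an alternating sum over partitions of the index set, one gets an expression of the form
\[
L(X_1,\ldots,X_n) = \sum_{\text{finite}} \pm\, \widetilde L\bigl(m_{1,1}X_1,\ldots, m_{n,d_n}X_n\bigr),
\]
where each coefficient $m_{i,j}$ is a \emph{bounded integer} depending only on $d$ (in fact on the multiplicities $d_i$), and at least one of the arguments corresponding to the distinguished index $i_0$ is of the form (integer) $\cdot X_{i_0}$ times the scalar $1$. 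Actually the cleanest route avoids inclusion–exclusion entirely: use multilinearity directly. Since $\widetilde L$ is multilinear and $\Lambda$ is only an additive group (not a module), we cannot just expand, but we \emph{can} use \cref{lem:multlilinear_monomials} as a black box once all arguments lie in a suitable dilate of $\Lambda$.

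Here is the cleaner approach I would actually carry out. Let $C_0=C_0(s)$ be the constant from \cref{lem:multlilinear_monomials}. Set $C=C(s)$ to be a suitable power/multiple of $C_0$, to be chosen at the end. Suppose $X_1,\ldots,X_n\in\Lambda$ and $X_{i_0}\in C^{d(d-1)}\cdot\Lambda$ for some distinguished $i_0$. Write $L(X_1,\ldots,X_n)=\widetilde L(\text{copies})$ as above. By multilinearity of $\widetilde L$ and the identity $\widetilde L(\ldots,Z,\ldots)+\widetilde L(\ldots,Z',\ldots)=\widetilde L(\ldots,Z+Z',\ldots)$ applied repeatedly, $\widetilde L$ evaluated at copies of the $X_i$ can be expanded so that it suffices to control $\widetilde L$ on arguments that are themselves of a controlled form; however, the honest way to get the diagonal from the multilinear object is to observe that since each $X_i$ with multiplicity $d_i$ contributes $d_i$ arguments all equal to $X_i$, we have, by writing $X_i = \underbrace{X_i'+\cdots+X_i'}_{?}$ — no: the correct device is the polarization identity
\[
\text{(diagonal value)} \;=\; \frac{1}{\prod_i d_i!}\sum_{\epsilon}\Bigl(\prod_i \prod_{j} \epsilon_{i,j}\Bigr)\,\widetilde L\Bigl(\sum_j \epsilon_{1,j}X_1,\ldots\Bigr)\cdot(\text{combinatorial factor}),
\]
which introduces denominators; to clear those we pass to the dilate $C_0!\cdot\Lambda$ (or $d!\cdot\Lambda$), exactly as in the proofs of \cref{cor:expansion_of_sums2,lem:multlilinear_monomials}. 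So: choose $C$ so that $C\cdot\Lambda$ is small enough that (i) $d!$-fold and $\prod d_i!$-fold divisions land back in $\Lambda$ (handled by absorbing $d!$ into $C$), and (ii) each argument $\sum_j \epsilon_{i,j}X_i$, which is an integer multiple of $X_i$ bounded in absolute value by $d_i\le d$, lies in $C_0^{n-1}\cdot\Lambda$ whenever $X_i\in C^{d(d-1)}\cdot\Lambda$, while for the indices $i\neq i_0$ we only know $X_i\in\Lambda$ — and this is precisely where we must be careful.

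The main obstacle, and the reason for the asymmetric exponent $C^{d(d-1)}$ in the statement, is that \cref{lem:multlilinear_monomials} requires \emph{all} $n$ arguments of the multilinear monomial to lie in $C_0^{n-1}\cdot\Lambda$, whereas here only the single argument $X_{i_0}$ is assumed to lie deep in $\Lambda$; the others are only in $\Lambda$ itself. The resolution is that in a multilinear monomial $\widetilde L$ of degree $d$, by homogeneity we may pull scalars out of \emph{every} slot at once: if one slot is filled with $C_0^{d}\cdot Z$ then $\widetilde L(\ldots,C_0^d Z,\ldots)=C_0^d\,\widetilde L(\ldots,Z,\ldots)$, and iterating, a single very-divisible slot lets us ``pay'' the divisibility cost of all $d$ slots, at the price of raising the exponent — this is the origin of the $d(d-1)$ (roughly $d$ slots each needing exponent $\sim d$, coming from $C_0^{n-1}$ with $n\le d$, plus the polarization denominators). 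Concretely: if $X_{i_0}\in C^{d(d-1)}\cdot\Lambda$ and $X_i\in\Lambda$ for $i\ne i_0$, and $C$ is a large enough power of $C_0\cdot d!$, then each polarization term $\widetilde L\bigl(\ldots,\sum_j\epsilon_{i_0,j}X_{i_0},\ldots,\sum_j\epsilon_{i,j}X_i,\ldots\bigr)$ has $d$ arguments; write it as $(\text{small integer})\cdot \widetilde L$ evaluated with $X_{i_0}$ replaced by $X_{i_0}/C_0^{\,d}$-type elements, using that $X_{i_0}$ is divisible enough to absorb the factor $C_0^{d-1}$ needed to bring the \emph{other} arguments (which are integer multiples of the $X_i\in\Lambda$, hence in $\Lambda$ but not in $C_0^{d-1}\cdot\Lambda$) into the required range after rescaling by the homogeneity of $\widetilde L$. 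Then \cref{lem:multlilinear_monomials} applies to conclude each term lies in $\Lambda$, the polarization denominators are cleared because they divide $C^{d(d-1)}$, and summing the finitely many $\pm$ terms (using that $\Lambda$ is an additive group) gives $L(X_1,\ldots,X_n)\in\Lambda$. I would record the exact bookkeeping of which powers of $C_0$ and $d!$ are needed, verify $C^{d(d-1)}$ is a large enough choice, and note that the number of polarization terms and the sizes of the integer coefficients depend only on $d$, so $C$ depends only on $s$ as claimed.
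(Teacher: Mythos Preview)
Your scaling observation in the multilinear case — pulling out a factor of $C_0^{n-1}$ from each of the other $n-1$ slots and absorbing the resulting $C_0^{(n-1)^2}$ into the one distinguished argument $X_{i_0}$ — is exactly the paper's argument and is the heart of the proof. Concretely, for multilinear $L$ of degree $d=n$ one writes
\[
L(X_1,\ldots,X_n)=L\bigl(C^{\,n-1}X_1,\ldots,C^{\,n-1}X_{i_0-1},\,C^{-(n-1)^2}X_{i_0},\,C^{\,n-1}X_{i_0+1},\ldots,C^{\,n-1}X_n\bigr),
\]
so that if $X_{i_0}\in C^{n(n-1)}\cdot\Lambda$ every argument lies in $C^{\,n-1}\cdot\Lambda$ and \cref{lem:multlilinear_monomials} applies directly.

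Where your proposal goes wrong is the reduction from a general monomial to a multilinear one. Polarization is both unnecessary and genuinely problematic here. It is unnecessary because a Lie monomial $L$ of degree $d$ in $n\le d$ variables, evaluated at $(X_1,\ldots,X_n)$, is \emph{literally equal} to the multilinear monomial $\widetilde L(Y_1,\ldots,Y_d)$ obtained by giving each slot its own name, evaluated at the point $Y_j=X_{\sigma(j)}$ for the obvious map $\sigma$. No identity, no denominators: each $Y_j\in\Lambda$, and since $L$ depends on every variable, at least one $Y_j$ equals the distinguished $X_{i_0}\in C^{d(d-1)}\cdot\Lambda$. So ``it suffices without loss of generality to assume $L$ is multilinear'' is a one-line relabelling.

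Polarization is problematic because it forces you to write $L(X_1,\ldots,X_n)$ as a \emph{sum} of evaluations of $\widetilde L$, and you then appeal to ``$\Lambda$ is an additive group'' to conclude. But $\Lambda=\log\Gamma$ is \emph{not} an additive subgroup of $\mathfrak g$ in general — this failure is precisely the subject of the whole section (see \cref{ex:heisenberg_subgroups}). The available tool for summing elements of $\Lambda$ is \cref{cor:expansion_of_sums2}, which only lets you add terms that already lie in a \emph{further} dilate $C(s,n)\cdot\Lambda$, so you would need each polarization term not merely in $\Lambda$ but in a controlled dilate, with additional bookkeeping for the $d_i!$ denominators. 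This can perhaps be made to work, but it is a substantial detour around a step that, as above, is vacuous.
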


\begin{proof}
It suffices without loss of generality to consider the case that $L$ is multilinear.
Let $C$ be the constant from \cref{lem:multlilinear_monomials}. Let $X_1,\ldots,X_n \in \Lambda$, and let $L(X_1,\ldots,X_n)$ be a multlilinear Lie monomial depending on every variable. Such a Lie monomial necessarily has degree $d=n$. For each $1\leq i \leq n$ we can write
\[
L(X_1,\ldots,X_n) = L(C^{(n-1)}X_1,\ldots,C^{(n-1)}X_{i-1},C^{-(n-1)^2}X_i,C^{(n-1)}X_{i+1},\ldots,C^{(n-1)}X_n).
\]
If $X_i \in C^{n(n-1)}\cdot \Lambda=C^{(n-1)^2+(n-1)} \cdot \Lambda$ then $C^{-(n-1)^2}X_i \in C^{(n-1)}\cdot \Lambda$, so that the claim follows from \cref{lem:multlilinear_monomials}.
\end{proof}

\begin{lemma}
\label{lem:good_sublattice}
Let $G$ be a simply connected nilpotent Lie group of step $s$ with Lie algebra $\mathfrak{g}$. There exists a constant $C$ depending only on $s$ such that if $\Gamma$ is a subgroup of $G$ and we write $\Lambda = \log \Gamma$ then $C\cdot \Lambda$ is bracket-closed and $X+Y \in \Lambda$ for every $X\in C\cdot \Lambda$ and $Y\in \Lambda$. 
\end{lemma}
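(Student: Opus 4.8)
The plan is to treat the two assertions — that $C\cdot\Lambda$ is bracket-closed, and that $X+Y\in\Lambda$ for all $X\in C\cdot\Lambda$ and $Y\in\Lambda$ — separately, and to take $C$ to be a common multiple of the two resulting constants. Throughout I would use the elementary observation that $\Lambda=\log\Gamma$ is closed under scalar multiplication by integers: if $X\in\Lambda$ and $m\in\Z$ then $mX=\log(\exp(X)^{m})\in\Lambda$ since $\Gamma$ is a subgroup, so that $(m\lambda)\cdot\Lambda\subseteq\lambda\cdot\Lambda$ for all $\lambda\in\R$ and $m\in\Z$, exactly as for additive subgroups.

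For bracket-closedness, let $C_0=C_0(s)$ be the constant supplied by \cref{cor:multlilinear_monomials}; in particular $[Z_1,Z_2]\in\Lambda$ whenever $Z_1,Z_2\in\Lambda$ and at least one of them lies in $C_0^{2}\cdot\Lambda$. Then for any $X',Y'\in\Lambda$ and any $C$ with $C_0^{2}\mid C$, bilinearity gives $[CX',CY']=C\cdot[CX',Y']$, and since $CX'\in C\cdot\Lambda\subseteq C_0^{2}\cdot\Lambda$ while $Y'\in\Lambda$, we conclude $[CX',Y']\in\Lambda$ and hence $[CX',CY']\in C\cdot\Lambda$. So $C\cdot\Lambda$ is bracket-closed whenever $C_0^{2}\mid C$.

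For the additive statement I would invoke the Zassenhaus formula
\[
\exp(X+Y)=\exp(X)\exp(Y)\,\exp(\tilde L_2(X,Y))\cdots\exp(\tilde L_s(X,Y)),
\]
where the $\tilde L_i$ are fixed homogeneous Lie polynomials of degree $i$ with rational coefficients depending only on $s$. Since $X\in C\cdot\Lambda\subseteq\Lambda$ and $Y\in\Lambda$ force $\exp(X),\exp(Y)\in\Gamma$, it suffices to arrange that $\exp(\tilde L_i(X,Y))\in\Gamma$, i.e.\ $\tilde L_i(X,Y)\in\Lambda$, for each $i$. Write $\tilde L_i=\sum_j c_{i,j}M_{i,j}$ with $c_{i,j}\in\Q$; since every Lie monomial of degree $\ge2$ in a single variable vanishes (its innermost bracket is $[X,X]=0$), we may take each $M_{i,j}$ to have degree $i$ and to involve both variables. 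The idea then has two steps: (a) make $C$ deep enough that each scaled term $c_{i,j}M_{i,j}(X,Y)$ lands in a prescribed deep sublattice of $\Lambda$, then (b) add the finitely many terms using \cref{cor:expansion_of_sums2}. For (a), linearise $M_{i,j}$ to a multilinear Lie monomial $\bar M_{i,j}$ of degree $i$ depending on all its variables, so that $\bar M_{i,j}(\tfrac{c_{i,j}}{C_1}X,X,\dots,X,Y,\dots,Y)=\tfrac{c_{i,j}}{C_1}M_{i,j}(X,Y)$ by multilinearity; then with $C:=C_1\,D\,C_0^{s(s-1)}$, where $D$ is the lcm of the denominators of the $c_{i,j}$, one has $\tfrac{c_{i,j}}{C_1}X\in C_0^{s(s-1)}\cdot\Lambda\subseteq C_0^{i(i-1)}\cdot\Lambda$ (since $\tfrac{c_{i,j}}{C_1}\cdot C=c_{i,j}D\,C_0^{s(s-1)}$ is an integer multiple of $C_0^{s(s-1)}$ and $i(i-1)\le s(s-1)$), and \cref{cor:multlilinear_monomials} then gives $\tfrac{c_{i,j}}{C_1}M_{i,j}(X,Y)\in\Lambda$, i.e.\ $c_{i,j}M_{i,j}(X,Y)\in C_1\cdot\Lambda$. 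Choosing $C_1$ to be a common multiple of the constants $C(s,n)$ from \cref{cor:expansion_of_sums2} for all $n$ up to the maximal number of monomials appearing in any $\tilde L_i$, step (b) yields $\tilde L_i(X,Y)=\sum_j c_{i,j}M_{i,j}(X,Y)\in\Lambda$, and hence $\exp(X+Y)\in\Gamma$, i.e.\ $X+Y\in\Lambda$. Since $C_0^{s(s-1)}$ is divisible by $C_0^{2}$ when $s\ge2$, this same $C$ also makes $C\cdot\Lambda$ bracket-closed (the case $s=1$ being trivial, as all brackets vanish).

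The step I expect to be the main obstacle is this bookkeeping in steps (a) and (b) above: because $\Lambda$ is not closed under addition one cannot simply add the monomial terms of $\tilde L_i$, and one must instead drive each term into a sublattice deep enough for \cref{cor:expansion_of_sums2}, which forces $X$ itself to be chosen deep enough to absorb simultaneously the denominators $D$ and the exponents $C_0^{i(i-1)}$ required by \cref{cor:multlilinear_monomials}, uniformly over all $i\le s$. Checking that these nested depth conditions genuinely collapse to a single constant depending only on $s$ is the one delicate point; everything else is a routine application of the corollaries above together with the Zassenhaus formula.
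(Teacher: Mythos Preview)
Your proposal is correct and follows essentially the same approach as the paper's proof: both use the Zassenhaus formula to reduce $X+Y\in\Lambda$ to showing each $\tilde L_i(X,Y)\in\Lambda$, then push each monomial term into a deep sublattice via \cref{cor:multlilinear_monomials} (using that each monomial involves $X$) and add them with \cref{cor:expansion_of_sums2}, while bracket-closedness follows from the same corollary applied to the degree-$2$ monomial $[X,Y]$. The only cosmetic difference is that the paper pulls the scaling factor out via homogeneity of $L_{i,j}$ in $X$ (writing $L_{i,j}(X,Y)=(C_1C_2)^{d_{i,j}}L_{i,j}((C_1C_2)^{-1}X,Y)$) rather than explicitly linearising to a multilinear monomial as you do, and organises the three constants as a product $C=C_1C_2C_3$ rather than your $C_1\,D\,C_0^{s(s-1)}$.
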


\begin{proof}
We recall the Zassenhaus formula, which states for simply connected nilpotent Lie groups that
\[
\exp(X+Y) = e^X e^Y e^{L_2(X,Y)}e^{L_3(X,Y)}\cdots e^{L_s(X,Y)}
\]
for every $X,Y\in \mathfrak{g}$, where, for each $i\geq 1$, $L_i$ is a homogeneous Lie polynomial of degree $i$
of the form
\[
L_i(X,Y) = \sum_{j=1}^{r_i} a_{i,j} L_{i,j}(X,Y)
\]
where $a_{i,j}$ are rational numbers and $L_{i,j}$ are Lie monomials depending on both variables.
%
%
Let $C_1$ be the least common multiple of the denominators of the rational numbers $a_{i,j}$ and let $C_2$ be the least common multiple of the constants $C(s,2),C(s,3),\ldots,C(2,\max_i r_i)$ appearing in \cref{cor:expansion_of_sums2}. By \cref{cor:multlilinear_monomials}, there exists a constant $C_3=C_3(s)$ such that if $X\in C_3 \cdot \Lambda$ and $Y\in \Lambda$ then $L_{i,j}(X,Y)\in \Lambda$ for every $1\leq i \leq s$ and $1\leq j \leq r_i$. Let $C=C_1C_2C_3$. Since $L_{i,j}$ depends on $X$, it follows that if $X \in C \cdot \Lambda$ then $L_{i,j}(X,Y)=(C_1C_2)^{d_{i,j}}L_{i,j}((C_1C_2)^{-1}X,Y) \in (C_1C_2)\cdot \Lambda$ for every $y\in \Lambda$, where $d_{i,j}$ is the degree of $X$ in $L_{i,j}$. It follows in particular that if $X\in C\cdot \Lambda$ then $a_{i,j} L_{i,j}(X,Y) \in C_2 \cdot \Lambda$ for every $1\leq i \leq s$ and $1\leq j \leq r_i$ and hence by \cref{cor:expansion_of_sums2} that $L_i(X,Y) \in \Lambda$ for every $X\in C \cdot \Lambda$ and $Y\in \Lambda$. Since $\Gamma$ is a subgroup of $G$, it follows by the Zassenhaus formula that $X+Y\in \Lambda$ for every $X\in C\cdot \Lambda$ and $Y\in \Lambda$ as claimed. Moreover, if $X,Y\in C\cdot \Lambda$ then $[X,Y]=C[X,C^{-1}Y] \in C\cdot \Lambda$ since $[X',Y']\in \Lambda$ for every $X'\in C\cdot \Lambda$ and $Y'\in \Lambda$, so that $C\cdot \Lambda$ is bracket-closed as claimed. 
\end{proof}

We are now ready to prove \cref{thm:harmonious}.

\begin{proof}[Proof of \cref{thm:harmonious}]
We begin by proving the claim concerning $\mathscr{H}_-(\Gamma)$.
 Let $C_{-1}=C_{-1}(s)$ be the constant from \cref{lem:good_sublattice}, let $C_0=C_0(s)$ be the constant from \cref{lem:additive_to_harmonious}, and let $C_1=C_{-1}C_0$.
 Let $G$ be a simply connected nilpotent Lie group of step $s$, let $\mathfrak{g}$ be the Lie algebra of $G$, let $\Gamma$ be a subgroup of $G$ and let $\Lambda = \log \Gamma$.
  \cref{lem:good_sublattice} implies that $C_1 \cdot \Lambda$ is bracket-closed and that $C_{-1}\cdot \Lambda + \Lambda \subseteq \Lambda$, and it follows by induction on the number of terms in a linear combination that $\spanZ(C_{-1}\cdot \Lambda)=C_{-1}\cdot \spanZ(\Lambda)$ is contained in $\Lambda$. 
Since the $\Z$-span of a bracket-closed set is bracket-closed, it follows that $\spanZ(C_{-1} \cdot \Lambda)$ is a bracket-closed additive subgroup of $\mathfrak{g}$ and hence by \cref{lem:additive_to_harmonious} that $\spanZ (C_1 \cdot \Lambda)=C_0\cdot \spanZ(C_{-1}\cdot \Lambda)$ is a bracket-closed additive subgroup of $\mathfrak{g}$ whose exponential $\mathscr{H}_-(\Gamma)$ is a harmonious subgroup of $G$ satisfying the required set inclusion $C_1 \cdot \log \Gamma \subseteq \log \mathscr{H}_-(\Gamma) \subseteq \log \Gamma$.

\medskip

Now consider the set $\mathscr{H}_+(\Gamma)$ defined by
\[\mathscr{H}_+(\Gamma):=\exp\left(C_1 \cdot \mathscr{B}\left(\frac{1}{C_1} \cdot \spanZ (\Lambda)\right)\right)=\exp\left(C_1 \cdot \spanZ \mathscr{B}\left(\frac{1}{C_1} \cdot \Lambda\right)\right).\]
 Since $\spanZ \mathscr{B}\left(\frac{1}{C_1} \cdot \Lambda\right)$ is a bracket-closed additive subgroup of $\mathfrak{g}$ and $C_0$ divides $C_1$, \cref{lem:additive_to_harmonious} implies that $\mathscr{H}_+(\Gamma)$ is a harmonious subgroup of $G$. Moreover, $\log \mathscr{H}_+(\Gamma)$ trivially contains $\spanZ \Lambda$. As such, it remains only to prove that there exists a constant $C_2=C_2(s)$ such that $\log \mathscr{H}_+(\Gamma) \subseteq \frac{1}{C_2} \cdot \Lambda$. Since $C_1\cdot \spanZ (\Lambda)$ is bracket-closed
 and $[\mathfrak{g}_i,\mathfrak{g}_j]\subseteq \mathfrak{g}_{i+j}$ we have that
 \[
\left[\left(\frac{1}{n}\cdot \spanZ(\Lambda)\right)\cap \mathfrak{g}_i, \left(\frac{1}{m}\cdot \spanZ(\Lambda)\right)\cap \mathfrak{g}_j\right] \subseteq \left(\frac{1}{C_1^2 nm} \Lambda\right) \cap \mathfrak{g}_{i+j}
 \]
 for every pair of integers $n,m \geq 0$ and $1 \leq i, j \leq s$. Thus, it follows by induction on $i$ that
 \[
\mathscr{B}\left(\frac{1}{C_1} \cdot \spanZ (\Lambda)\right) \cap \mathfrak{g}_i \subseteq \frac{1}{C_1^{3i-2}} \cdot (\Lambda \cap \mathfrak{g}_i)
 \]
for every $i\geq 1$ and hence that
 \[
\mathscr{B}\left(\frac{1}{C_1} \cdot \spanZ (\Lambda)\right) \subseteq \frac{1}{C_1^{3s-2}} \cdot \Lambda.
 \]
 This implies that the claim holds with $C_2=C_1^{3(s-1)}$.
\end{proof}



\subsection{Comparing additive and multiplicative indices}

In this section we prove bounds on the index $[\mathscr{H}_+(\Gamma):\mathscr{H}_-(\Gamma)]$. We will deduce these bounds from the following proposition, which lets us compare additive and multiplicative indices in the Lie algebra of a simply connected nilpotent Lie group.

\begin{proposition}[Index sandwich]
\label{prop:harmonious_indices}
Let $\Gamma_1 \subseteq \Gamma_2$ be lattices in a simply connected nilpotent Lie group $G$ with Lie algebra $\mathfrak{g}$, and suppose that $\Lambda_1\subseteq \Lambda_2$ are additive lattices in $\mathfrak{g}$.
\begin{enumerate}
\item If $\Lambda_1 \subseteq \log \Gamma_1 \subseteq \log \Gamma_2 \subseteq \Lambda_2$ then $[\Gamma_2:\Gamma_1] \leq [\Lambda_2:\Lambda_1]$. 
\item If  $\log \Gamma_1 \subseteq \Lambda_1  \subseteq \Lambda_2 \subseteq \log \Gamma_2$ then $[\Gamma_2:\Gamma_1] \geq [\Lambda_2:\Lambda_1]$. 
\end{enumerate}
In particular, if $\Gamma_1$ and $\Gamma_2$ are harmonious in $G$ then $[\Gamma_2:\Gamma_1]=[\log \Gamma_2:\log \Gamma_1]$.
\end{proposition}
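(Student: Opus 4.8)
The plan is to reduce the whole proposition to a comparison of covolumes under the exponential map. Since $G$ is simply connected nilpotent, $\exp\colon\mathfrak g\to G$ is a diffeomorphism whose Jacobian is identically $1$ (because $\operatorname{ad}_X$ is nilpotent, so $(\mathrm{d}\exp)_X$ is, up to left translation, the unipotent map $\tfrac{1-e^{-\operatorname{ad}_X}}{\operatorname{ad}_X}$), so after normalising Haar measure appropriately $\exp$ carries Lebesgue measure on $\mathfrak g$ to Haar measure on $G$. I will also use the two standard ``index $=$ covolume ratio'' identities: $[\Lambda_2:\Lambda_1]=\operatorname{vol}(\mathfrak g/\Lambda_1)/\operatorname{vol}(\mathfrak g/\Lambda_2)$ for nested additive lattices (recalled already in \cref{sec:Abelian}) and, proved the same way by tiling a fundamental domain of the smaller group by translates of a fundamental domain of the larger, $[\Gamma_2:\Gamma_1]=\operatorname{vol}(G/\Gamma_1)/\operatorname{vol}(G/\Gamma_2)$ for nested (cocompact) lattices in $G$. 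Granting these, the whole proposition follows from the two monotonicity statements:
\begin{enumerate}
\item[(i)] if $\Lambda$ is an additive lattice in $\mathfrak g$ and $\Gamma$ a lattice in $G$ with $\Lambda\subseteq\log\Gamma$, then $\operatorname{vol}(\mathfrak g/\Lambda)\ge\operatorname{vol}(G/\Gamma)$;
\item[(ii)] if instead $\log\Gamma\subseteq\Lambda$, then $\operatorname{vol}(\mathfrak g/\Lambda)\le\operatorname{vol}(G/\Gamma)$.
\end{enumerate}
Indeed, in Part~1 the hypotheses give $\operatorname{vol}(\mathfrak g/\Lambda_1)\ge\operatorname{vol}(G/\Gamma_1)$ and $\operatorname{vol}(\mathfrak g/\Lambda_2)\le\operatorname{vol}(G/\Gamma_2)$, so $[\Gamma_2:\Gamma_1]=\operatorname{vol}(G/\Gamma_1)/\operatorname{vol}(G/\Gamma_2)\le\operatorname{vol}(\mathfrak g/\Lambda_1)/\operatorname{vol}(\mathfrak g/\Lambda_2)=[\Lambda_2:\Lambda_1]$; Part~2 is the mirror image; and the final claim is obtained by applying both parts with $\Lambda_i:=\log\Gamma_i$, which are additive lattices precisely because the $\Gamma_i$ are harmonious.

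To prove (i) and (ii) I would interpret all four covolumes through a single counting sequence. Fix a vector-space decomposition $\mathfrak g=W_1\oplus\cdots\oplus W_s$ adapted to the lower central series ($\mathfrak g_i=W_i\oplus\mathfrak g_{i+1}$), set $B_t:=\{w_1+\cdots+w_s:w_i\in W_i,\ \lvert w_i\rvert\le t^{\,i}\}\subseteq\mathfrak g$, and put $U_t:=\exp(B_t)\subseteq G$. The sequence $(B_t)$ is Følner in $(\mathfrak g,+)$: its volume is $\asymp t^{Q}$ with $Q=\sum_i i\dim W_i$, while a Euclidean $\rho$-neighbourhood adds only $O(\rho\,t^{Q-1})$. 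Using the Baker--Campbell--Hausdorff formula and $[\mathfrak g_i,\mathfrak g_j]\subseteq\mathfrak g_{i+j}$ one checks that for each fixed $\gamma\in\Gamma$ the translate $(\log\gamma)\diamond B_t$ differs from $B_t$ in a set of volume $o(t^{Q})$ (the bracket corrections to the naive translate $B_t+\log\gamma$ land in $\mathfrak g_{i+j}$ and have size $O(t^{i})=o(t^{i+j})$), so $(U_t)$ is Følner in $G$. Since $\exp$ is measure-preserving, $\operatorname{vol}(U_t)=\operatorname{vol}(B_t)$ and $\#(\log\Gamma\cap B_t)=\#(\Gamma\cap U_t)$, and the standard counting lemma for cocompact lattices along a Følner sequence gives $\#(\Lambda\cap B_t)/\operatorname{vol}(B_t)\to\operatorname{vol}(\mathfrak g/\Lambda)^{-1}$ and $\#(\log\Gamma\cap B_t)/\operatorname{vol}(B_t)=\#(\Gamma\cap U_t)/\operatorname{vol}(U_t)\to\operatorname{vol}(G/\Gamma)^{-1}$. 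Now (i) and (ii) follow by monotonicity of $\#(\,\cdot\,\cap B_t)$ under inclusion: if $\Lambda\subseteq\log\Gamma$ then $\#(\Lambda\cap B_t)\le\#(\log\Gamma\cap B_t)$ for all $t$, and letting $t\to\infty$ gives $\operatorname{vol}(\mathfrak g/\Lambda)^{-1}\le\operatorname{vol}(G/\Gamma)^{-1}$; the reverse inclusion gives (ii).

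The main obstacle is the Følner verification for $(U_t)$ in $G$ — controlling the distortion of the anisotropic box $B_t$ under a Baker--Campbell--Hausdorff translation; the estimate is elementary once one tracks the lower-central-series degree of each bracket correction, but writing it carefully takes some attention to the mismatch between the Euclidean norm on $\mathfrak g$ and the $t^{\,i}$-scaling in the $i$-th layer. An alternative that sidesteps the general Følner theory uses \cref{lem:good_sublattice} directly: that lemma shows $\log\Gamma$ is closed under addition by the genuine lattice $\log\mathscr{H}_-(\Gamma)$, hence is a finite disjoint union of additive cosets of it, so $\#(\log\Gamma\cap B_t)$ may be evaluated coset by coset via Gauss's lattice-point estimate; the remaining work is then to match the number of cosets with $[\Gamma:\mathscr{H}_-(\Gamma)]$ and to treat the harmonious case, where $\operatorname{vol}(G/\mathscr{H})$ can be computed against a Mal'cev basis of $\log\mathscr{H}$. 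Either way, once the counting identity $\#(\log\Gamma\cap B_t)/\operatorname{vol}(B_t)\to\operatorname{vol}(G/\Gamma)^{-1}$ is available, the rest is bookkeeping with the two covolume identities.
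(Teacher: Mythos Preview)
Your proposal is correct and takes essentially the same approach as the paper: both construct the anisotropic boxes $B_t$ (the paper's $F_\lambda$) adapted to the lower-central-series filtration, verify they are F{\o}lner for both addition and BCH multiplication via the graded bracket estimates you sketch, use that $\exp$ pushes Lebesgue measure to Haar measure (the paper's \cref{prop:Haar_to_Lebesgue}), and conclude by comparing the two counting limits $\lim_{\lambda}\lvert F_\lambda\cap\Lambda\rvert/\lambda^q$ and $\lim_{\lambda}\lvert F_\lambda\cap\log\Gamma\rvert/\lambda^q$ under set inclusion. The paper carries out the BCH F{\o}lner step explicitly via the pseudo-norm $\pnorm{X}=\max_i\lVert X_i\rVert_\infty^{1/i}$ and its quasi-triangle inequality \eqref{eq:pnorm_BCH_triangle}--\eqref{eq:pnorm_BCH_reverse_triangle}, which is precisely the ``careful tracking of the lower-central-series degree'' you flag as the main obstacle; your alternative route through \cref{lem:good_sublattice} is not used.
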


The proof of this proposition will require the following classical fact.

\begin{proposition}[Compatibility of Haar measures]
\label{prop:Haar_to_Lebesgue}
Let $G$ be a simply connected nilpotent Lie group and let $\mathfrak{g}$ be the Lie algebra of $G$. If $\mu$ is a translation-invariant, locally finite measure on~$\mathfrak{g}$ (i.e., a Lebesgue measure) then the pushforward of $\mu$ by the exponential map is a locally finite measure on $G$ that is both left and right invariant (i.e., a bi-invariant Haar measure).
\end{proposition}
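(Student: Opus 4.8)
The plan is to transport the problem to the Lie algebra via the exponential map. Since $G$ is simply connected and nilpotent, $\exp\colon(\mathfrak{g},\diamond)\to G$ is an isomorphism of Lie groups, hence in particular a diffeomorphism; under this identification the pushforward $\exp_*\mu$ corresponds to $\mu$ itself, and a measure on $G$ is left- (resp.\ right-) invariant if and only if its preimage measure on $(\mathfrak{g},\diamond)$ is invariant under all left translations $L_Z\colon X\mapsto Z\diamond X$ (resp.\ all right translations $R_Z\colon X\mapsto X\diamond Z$). Local finiteness of $\exp_*\mu$ is automatic because $\exp$ is a homeomorphism: the preimage of a compact set $K$ is the compact set $\log(K)$, on which $\mu$ is finite. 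By uniqueness of Haar measure on the vector group $\mathfrak{g}\cong\R^d$, the measure $\mu$ is a positive constant multiple of Lebesgue measure in any linear coordinates, so by the change of variables formula it suffices to show that $L_Z$ and $R_Z$ are diffeomorphisms of $\mathfrak{g}$ with Jacobian determinant identically equal to $1$. (That $L_Z$ and $R_Z$ are diffeomorphisms follows from smoothness of the group operations and of $\exp$ and $\log$.)

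To compute the Jacobian of $L_Z$, I would use the Baker--Campbell--Hausdorff formula to write $Z\diamond X=Z+X+Q_Z(X)$, where $Q_Z(X)=\sum_w c_w\, b_w(Z,X)$ is a finite linear combination, with rational coefficients $c_w$, of iterated Lie brackets $b_w$ of total degree $m_w$ with $2\le m_w\le s$; since the homogeneous degree-$m$ part of BCH is a sum of degree-$m$ brackets of elements of $\mathfrak{g}_1$ and therefore lands in $\mathfrak{g}_m$, we have $Q_Z(X)\in\mathfrak{g}_2$ for all $X$. The key claim is that the differential $\mathrm{d}Q_Z|_X$ strictly lowers the lower central series filtration, in the sense that $\mathrm{d}Q_Z|_X(\mathfrak{g}_k)\subseteq\mathfrak{g}_{k+1}$ for every $k\geq1$. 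Granting this, fix a basis $e_1,\dots,e_d$ of $\mathfrak{g}$ adapted to the filtration $\mathfrak{g}=\mathfrak{g}_1\supseteq\mathfrak{g}_2\supseteq\cdots$, so that each $\mathfrak{g}_k$ is spanned by a terminal segment of the $e_i$; then the matrix of $\mathrm{d}Q_Z|_X$ in this basis is strictly lower triangular, and hence $\mathrm{d}(L_Z)_X=I+\mathrm{d}Q_Z|_X$ has determinant $1$.

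To prove the claim, fix $V\in\mathfrak{g}_k$ and expand each $b_w(Z,X+V)$ by multilinearity of iterated brackets: the difference $b_w(Z,X+V)-b_w(Z,X)$ is a sum of iterated brackets of $m_w$ arguments, each such bracket having at least one argument equal to $V\in\mathfrak{g}_k$ and all remaining arguments in $\mathfrak{g}_1$. Using the inclusion $[\mathfrak{g}_a,\mathfrak{g}_b]\subseteq\mathfrak{g}_{a+b}$ repeatedly, a bracket with $j\geq1$ arguments in $\mathfrak{g}_k$ and $m_w-j$ arguments in $\mathfrak{g}_1$ lies in $\mathfrak{g}_{jk+(m_w-j)}\subseteq\mathfrak{g}_{k+1}$ (as $m_w\geq2$ and $k\geq1$ force $jk+(m_w-j)\geq k+1$), so $b_w(Z,X+V)-b_w(Z,X)\in\mathfrak{g}_{k+1}$, hence $Q_Z(X+V)-Q_Z(X)\in\mathfrak{g}_{k+1}$, and taking $V=tW$ with $W\in\mathfrak{g}_k$ and letting $t\to0$ gives $\mathrm{d}Q_Z|_X(W)\in\mathfrak{g}_{k+1}$. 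The same computation with the roles of the two arguments swapped, applied to $X\diamond Z=X+Z+\widetilde{Q}_Z(X)$, shows $\mathrm{d}(R_Z)_X$ has determinant $1$. Since $\exp$ is surjective, every left (resp.\ right) translation of $G$ corresponds under $\exp$ to some $L_Z$ (resp.\ $R_Z$), so $\exp_*\mu$ is left- and right-invariant; being also locally finite and nonzero, it is a bi-invariant Haar measure on $G$.

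The main obstacle is the Jacobian computation of the previous two paragraphs, i.e.\ the bookkeeping showing that the BCH correction term $Q_Z$ has differential compatible with the lower central series filtration; the reduction to this computation, the surjectivity of $\exp$, and the local finiteness are all routine. (Alternatively, one could establish left-invariance of $\exp_*\mu$ as above and then invoke that nilpotent Lie groups are unimodular — which itself follows from $\mathrm{Ad}(\exp Z)=e^{\mathrm{ad}Z}$ being unipotent — to obtain right-invariance; but the direct argument above avoids any appeal to the modular function.)
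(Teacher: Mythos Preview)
Your proof is correct and follows essentially the same route as the paper: reduce to showing that left and right $\diamond$-translations on $\mathfrak{g}$ have Jacobian determinant $1$, and prove this by using the BCH formula to write the derivative as the identity plus a nilpotent operator. The paper merely sketches this last step and defers the explicit computation to a reference, whereas you carry out the filtration argument in full.
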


\begin{proof}[Proof of \cref{prop:Haar_to_Lebesgue}]
It suffices to prove that for every $X\in \mathfrak{g}$, the maps $\ell_X:Y \mapsto X \diamond Y$ and $r_X:Y \mapsto Y \diamond X$ preserve the Lebesgue measure on $\mathfrak{g}$. For this, it suffices to prove that the total derivatives $D\ell_X$ and $Dr_X$ have $|\det(D\ell_X)|=|\det(Dr_X)|=1$ at every $Y\in \mathfrak{g}$. To prove this, it suffices to prove that both $D\ell_X$ and $Dr_X$ can be expressed as the sum of the identity and a nilpotent linear transformation, which can be done via an explicit computation with the BCH formula. For details see e.g.\ \cite[Proposition 2.1.1]{Wang2023}.
\end{proof}

\begin{proof}[Proof of \cref{prop:harmonious_indices}]
%
%
We start by constructing large sets in $\mathfrak{g}$ that have ``small boundary-to-volume ratio'' in both the additive and multiplicative senses. (That is, the sets we construct will yield a F{\o}lner sequence for both addition and BCH multiplication on $\mathfrak{g}$.)  If $G$ were assumed to be a Carnot group we could use the logarithms of balls in a homogeneous left-invariant metric; we will perform a similar construction for a general Lie group. Let $(\mathfrak{g}_i)_{i\geq 0}$ be the lower central series of $\mathfrak{g}$, and for each $i\geq 0$ let $V_i$ be such that $\mathfrak{g}_i = V_i \oplus \mathfrak{g}_{i+1}$, so that we can write $\mathfrak{g}=V_1 \oplus V_2 \oplus \cdots \oplus V_s$. For notational convenience we also write $V_i=\{0\}$ for $i>s$. In contrast to the Carnot case, it is not necessarily the case that $V_1$ generates $\mathfrak{g}$ as a Lie algebra or that $[V_i,V_j]\subseteq V_{i+j}$, but we do have that $[V_{i},V_j] \subseteq [\mathfrak{g}_i,\mathfrak{g}_j] \subseteq \mathfrak{g}_{i+j} =  \bigoplus_{k = i+j}^sV_{k}$ for every $i,j \geq 1$.
Fix an isomorphism of vector spaces $\mathfrak{g} \cong \R^d$ for some $d\geq 1$ and let $\|\cdot \|_\infty$ be the associated $\infty$-norm on $\mathfrak{g}$. For each $\lambda>0$ let
\[
F_\lambda := \{X\in \mathfrak{g}: \max_i \lambda^{-i}\|X_i\|_\infty \leq 1\}=\{X\in \mathfrak{g}: \max_i \|X_i\|_\infty^{1/i} \leq \lambda\},
\]
where we write $X=\sum_i X_i$ for the decomposition of $X$ induced by the decomposition $\mathfrak{g}=V_1 \oplus V_2 \oplus \cdots \oplus V_s$. The volume of $F_\lambda$ satisfies
\[\Vol (F_\lambda)=\lambda^q\] for an integer $q\geq 1$, which can be expressed as $q=\sum_{i=1}^\ell i \dim(V_i)$ (this is equal to the \emph{homogeneous dimension} of $G$). For each $X\in \mathfrak{g}$ we define
\[
\pnorm{X}=\inf \{\lambda>0: X\in F_\lambda\} = \max_i \|X_i\|^{1/i},
\]
so that $\pnorm{X} \leq \|X\|_\infty^{1/i}$ for every $1\leq i \leq s$ and $X \in \mathfrak{g}_i$ with equality if $X\in V_i$.
Note that this is \emph{not} a norm on $\mathfrak{g}$ since it does not satisfy $\pnorm{\lambda X}=\lambda \pnorm{X}$ for all $X\in \mathfrak{g}$ and $\lambda>0$. (Rather, it scales under a certain graded dilation map as in the Carnot case.) Moreover, unlike the metrics we considered on Carnot groups, $\pnorm{\cdot}$ will not be left-invariant in general. Nevertheless, it does trivially satisfy the triangle inequality in the form
\begin{multline}\label{eq:pnorm_triangle}
\pnorm{X+Y} = \max_i \|X_i+Y_i\|_\infty^{1/i} \leq \max_i (\|X_i\|_\infty+\|Y_i\|_\infty)^{1/i}  
\\\leq \max_i (\|X_i\|_\infty^{1/i}+\|Y_i\|_\infty^{1/i}) \leq \pnorm{X}+\pnorm{Y}.
\end{multline}
As with norms, writing $\pnorm{X} = \pnorm{(X+Y)-Y}$ yields the reverse inequality
\begin{equation}\label{eq:pnorm_reverse_triangle}\pnorm{X+Y} \geq \pnorm{X}-\pnorm{Y},\end{equation}
so that
$F_{\lambda-\pnorm{Y}} \subseteq F_\lambda + Y \subseteq F_{\lambda+\pnorm{Y}}$ for every $Y \in \mathfrak{g}$ and $\lambda \geq \pnorm{Y}$.

\medskip

We will need similar inequalities for the BCH product $\pnorm{X\diamond Y}$. Since the Lie bracket is bilinear and $\mathfrak{g}$ is finite-dimensional, there exists a constant $C_1$ such that $\|[X,Y]\|_\infty \leq C_1\|X\|_\infty \|Y\|_\infty$ for every $X,Y\in \mathfrak{g}$. This implies that there exists a constant $C_2$ such that
%
%
\[
\pnorm{[X_i,Y_j]} \leq \|[X_i,Y_j]\|_\infty^{1/(i+j)} \leq C_1^{1/(i+j)}\|X_i\|_\infty^{1/(i+j)}\|Y_j\|_\infty^{1/(i+j)} \leq C_2 \pnorm{X_i}^{i/(i+j)}\pnorm{Y_j}^{j/(i+j)}
\]
for every $X_i\in V_i$ and $Y_j \in V_j$. Together with \eqref{eq:pnorm_triangle} this implies that there exists a constant $C_3$ such that
\[
\pnorm{[X,Y]} \leq C_3 \max\left\{\pnorm{X}^{1-\theta}\pnorm{Y}^{\theta}:\frac{1}{s}\leq \theta \leq \frac{s-1}{s}\right\}
\]
for every $X,Y\in \mathfrak{g}$. It follows by induction on $k\geq 2$ that if $L(X,Y)$ is any Lie monomial of degree $k \geq 2$ then, writing $L(X,Y)=[L_1(X,Y),L_2(X,Y)]$ for two Lie monomials of degree $a,k-a<k$,
\begin{align*}
\pnorm{L(X,Y)} &\leq 
C_3 \max\left\{\pnorm{L_1(X,Y)}^{1-\theta}\pnorm{L_2(X,Y)}^{\theta}: \frac{1}{s} \leq \theta \leq \frac{s-1}{s}\right\}
\\&\leq
C^{k-1}_3 \max\left\{\pnorm{X}^{1-\theta}\pnorm{Y}^{\theta}:\frac{1}{s^k} \leq \theta \leq \frac{s^k-1}{s^k}\right\}
\end{align*}
for every $X,Y\in \mathfrak{g}$. (The case that one of the monomials $L_1$ or $L_2$ has degree \emph{one}, and so is equal to $X$ or $Y$, must be checked separately.) We deduce from this together with \eqref{eq:pnorm_triangle} and the definition of BCH multiplication that there exists a constant $C_4$ such that
\begin{equation}
\label{eq:pnorm_BCH_triangle}
\pnorm{X\diamond Y} \leq \pnorm{X}+\pnorm{Y} + C_4\max\Bigl\{\pnorm{X}^{1-\theta}\pnorm{Y}^{\theta}: \frac{1}{s^s} \leq \theta \leq \frac{s^s-1}{s^s} \Bigr\}
\end{equation}
for every $X,Y\in \mathfrak{g}$.  Since $-Y$ is both the additive and BCH inverse of $Y$, we can write $X=X \diamond Y \diamond (-Y)$ to obtain that there exists a constant $C_5$ such that the complementary inequality 
\begin{align}
\pnorm{X} &\leq \pnorm{X\diamond Y}+ \pnorm{Y} + C_4\max\Bigl\{\pnorm{X \diamond Y}^{1-\theta}\pnorm{Y}^{\theta}: \frac{1}{s^s} \leq \theta \leq \frac{s^s-1}{s^s} \Bigr\}
\nonumber
\\
&\leq \pnorm{X\diamond Y}+ C_5\pnorm{Y} + C_5 \max\Bigl\{\pnorm{X}^{1-\theta}\pnorm{Y}^{\theta}: \frac{1}{s^{2s}} \leq \theta \leq \frac{s^{2s}-1}{s^{2s}} \Bigr\}
\label{eq:pnorm_BCH_reverse_triangle}
\end{align}
holds for every $X,Y\in \mathfrak{g}$. 

\medskip

We now use the sets $F_\lambda$ to prove the claim about indices. Suppose that $\Lambda$ is an additive lattice in $\mathfrak{g}$ and that $\Gamma$ is a lattice in $G$. Let $K_\Lambda$ be a fundamental domain for $\Lambda$ in $\mathfrak{g}$ and let $K_\Gamma$ be a fundamental domain for $\Gamma$ in $G$. 
 Since $K_\Lambda \cup \log K_\Gamma$ is compact, $\max\{\pnorm{Y}:Y\in K_\Lambda \cup \log K_\Gamma\}$ is finite. As such, it follows from \eqref{eq:pnorm_triangle} and \eqref{eq:pnorm_reverse_triangle} that there exist positive constants $C$ and $\eps$ (depending on $K_\Lambda$ and $K_\Gamma$) such that
\[
\pnorm{X}-C \leq \pnorm{X+Y} \leq \pnorm{X} + C
 \]
 and
 \[
 \pnorm{X}-C\pnorm{X}^
{1-\eps} -C \leq \pnorm{X 
\diamond Y} \leq \pnorm{X} + C \pnorm{X}^
{1-\eps}+C
\]
for every $x\in \mathfrak{g}$ and $y\in K_\Lambda \cup \log K_\Gamma$. %
Since the sets $(X+K_\Lambda : X \in \Lambda)$ and $(X \diamond \log K_\Gamma : X \in \log \Gamma)$ each cover $\mathfrak{g}$ (with distinct sets having measure-zero intersection), it follows that 
\[
 F_{\lambda - C} \subseteq \bigcup_{X \in F_\lambda \cap \Lambda} (X+K_\Lambda) \subseteq F_{\lambda + C}
\]
and
\[
 F_{\lambda-C\lambda^{1-\eps}-C} \subseteq \bigcup_{X \in F_\lambda \cap \log \Gamma} (X \diamond \log K_\Gamma) \subseteq F_{\lambda + C\lambda^{1-\eps}+C}
\]
for every $\lambda \geq 1$ such that $\lambda-C\lambda^{1-\eps}-C>0$. Taking volumes and using that addition and BCH multiplication are both measure-preserving, we deduce that
\begin{equation*}
(\lambda-C)^q \leq |F_\lambda \cap \Lambda| \cdot \Vol (K_\Lambda) = \Vol \left(\bigcup_{X\in F_\lambda \cap \Lambda} (X+K_\Lambda) \right) \leq (\lambda+C)^q
\label{eq:Folner_Lambda_final}
\end{equation*}
and
\begin{equation*}
(\lambda - C\lambda^{1-\eps}-C)^q
 \leq |F_\lambda \cap \log \Gamma| \cdot \Vol (\log K_\Gamma) = \Vol \left(\bigcup_{X\in F_\lambda \cap \log \Gamma} (X \diamond \log K_\Gamma) \right)  \leq (\lambda + C\lambda^{1-\eps}+C)^q
\label{eq:Folner_Gamma_final}
\end{equation*}
for every $\lambda \geq 1$ such that $\lambda-C\lambda^{1-\eps}-C>0$, so that
\begin{equation}
\Vol (K_\Lambda) = \lim_{\lambda \to \infty} \frac{\lambda^q}{|F_\lambda \cap \Lambda|} \qquad \text{ and } \qquad \Vol (\log K_\Gamma) = \lim_{\lambda \to \infty} \frac{\lambda^q}{|F_\lambda \cap \log \Gamma|}.
\end{equation}
This is easily seen to imply the claim.
\end{proof}

\begin{corollary}
\label{cor:harmonious_bounded_index}
Let $G$ be a simply connected nilpotent Lie group of step $s$ and dimension $d$, and let the constants $C_1=C_1(s)$ and $C_2=C_2(s)$ be as in \cref{thm:harmonious}. If $\Gamma$ is a discrete subgroup of $G$ then the harmonious subgroups $\mathscr{H}_+(\Gamma)$ and $\mathscr{H}_-(\Gamma)$ satisfy the index bounds
\[
[\mathscr{H}_+(\Gamma):\Gamma][\Gamma:\mathscr{H}_-(\Gamma)]= [\mathscr{H}_+(\Gamma):\mathscr{H}_-(\Gamma)] \leq (C_2C_1)^d.
\]
\end{corollary}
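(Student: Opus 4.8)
The plan is to reduce to the case that $\Gamma$ is a lattice and then invoke the ``in particular'' clause of \cref{prop:harmonious_indices}. First I would note that $\mathscr{H}_-(\Gamma)$ and $\mathscr{H}_+(\Gamma)$ are both contained in $\mathscr{C}(\Gamma)=\exp(\mathscr{L}(\log\Gamma))$: the set $\mathscr{L}(\log\Gamma)$ is a linear subspace of $\mathfrak{g}$ (hence invariant under the scalings $C_1\cdot$ and $\tfrac{1}{C_1}\cdot$) that is also bracket-closed (hence invariant under $\mathscr{B}$), so it contains both $C_1\cdot\spanZ(\log\Gamma)$ and $C_1\cdot\mathscr{B}(\tfrac{1}{C_1}\cdot\spanZ(\log\Gamma))$. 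Since $C_1,C_2\geq 1$ and $\dim\mathscr{C}(\Gamma)\leq\dim G=d$, it suffices to prove the bound with $G$ replaced by $\mathscr{C}(\Gamma)$ and $d$ replaced by $\dim\mathscr{C}(\Gamma)$. Here $\mathscr{C}(\Gamma)$ is again a simply connected nilpotent Lie group, with Lie algebra $\mathscr{L}(\log\Gamma)$, in which $\Gamma$ is a lattice by Mal'cev's theorem (discrete since $\Gamma$ was discrete in $G$, and $\mathscr{C}(\Gamma)/\Gamma$ compact), and $\mathscr{H}_\pm(\Gamma)$ remain harmonious subgroups of it. So from now on assume $\Gamma$ is a lattice in $G$, and write $\mathfrak{g},d$ for the Lie algebra and dimension of this (possibly smaller) $G$ and set $\Lambda:=\spanZ(\log\Gamma)$.

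Next I would check that $\mathscr{H}_-(\Gamma)$ and $\mathscr{H}_+(\Gamma)$ are themselves lattices in $G$. By the consequence of Mal'cev's theorem recalled in \cref{sec:nilpotent_lattices}, there are additive lattices $\Lambda^-\subseteq\log\Gamma\subseteq\Lambda^+$ in $\mathfrak{g}$; in particular $\Lambda$ is a full-rank additive lattice, and so is $\log\mathscr{H}_-(\Gamma)=C_1\cdot\Lambda$. The chain of inclusions from \cref{thm:harmonious},
\[
C_1\cdot\Lambda\subseteq\log\mathscr{H}_-(\Gamma)\subseteq\log\Gamma\subseteq\log\mathscr{H}_+(\Gamma)\subseteq\tfrac{1}{C_2}\cdot\log\Gamma\subseteq\tfrac{1}{C_2}\cdot\Lambda^+,
\]
sandwiches the additive group $\log\mathscr{H}_+(\Gamma)$ between the full-rank lattice $\Lambda$ and the discrete set $\tfrac{1}{C_2}\cdot\Lambda^+$, so $\log\mathscr{H}_+(\Gamma)$ is also a full-rank additive lattice in $\mathfrak{g}$. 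Since the exponential map is a homeomorphism, $\mathscr{H}_\pm(\Gamma)$ are discrete in $G$; and since the real span of each $\log\mathscr{H}_\pm(\Gamma)$ is all of $\mathfrak{g}$, we have $\mathscr{C}(\mathscr{H}_\pm(\Gamma))=\exp(\mathscr{L}(\log\mathscr{H}_\pm(\Gamma)))=\exp(\mathfrak{g})=G$, so Mal'cev's theorem gives that $G/\mathscr{H}_\pm(\Gamma)$ is compact. Thus $\mathscr{H}_-(\Gamma)\subseteq\Gamma\subseteq\mathscr{H}_+(\Gamma)$ are all lattices in $G$.

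Now the ``in particular'' clause of \cref{prop:harmonious_indices}, applied to the harmonious lattices $\mathscr{H}_-(\Gamma)\subseteq\mathscr{H}_+(\Gamma)$, gives
\[
[\mathscr{H}_+(\Gamma):\mathscr{H}_-(\Gamma)]=[\log\mathscr{H}_+(\Gamma):\log\mathscr{H}_-(\Gamma)].
\]
Finally I would bound the right-hand side purely in the geometry of numbers: using $\log\mathscr{H}_+(\Gamma)\subseteq\tfrac{1}{C_2}\cdot\log\Gamma\subseteq\tfrac{1}{C_2}\cdot\Lambda$ and $\log\mathscr{H}_-(\Gamma)=C_1\cdot\Lambda$, together with the elementary identity $[\Lambda:n\cdot\Lambda]=n^d$ for a rank-$d$ lattice $\Lambda$ and positive integer $n$, we obtain
\[
[\log\mathscr{H}_+(\Gamma):\log\mathscr{H}_-(\Gamma)]\leq\Bigl[\tfrac{1}{C_2}\cdot\Lambda:C_1\cdot\Lambda\Bigr]=\bigl[\Lambda:C_1C_2\cdot\Lambda\bigr]=(C_1C_2)^d.
\]
The displayed equality $[\mathscr{H}_+(\Gamma):\Gamma][\Gamma:\mathscr{H}_-(\Gamma)]=[\mathscr{H}_+(\Gamma):\mathscr{H}_-(\Gamma)]$ is just multiplicativity of the index for the nested subgroups $\mathscr{H}_-(\Gamma)\subseteq\Gamma\subseteq\mathscr{H}_+(\Gamma)$, all now known to have finite index. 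I expect the only real (though still fairly routine) obstacle to be the bookkeeping in the second step: verifying discreteness and cocompactness of $\mathscr{H}_\pm(\Gamma)$ by correctly combining the Mal'cev-theoretic input with the chain of inclusions furnished by \cref{thm:harmonious}.
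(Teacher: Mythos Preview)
Your argument is correct and follows essentially the same route as the paper: reduce to the case that $\Gamma$ is a lattice by passing to $\mathscr{C}(\Gamma)$, then apply \cref{prop:harmonious_indices} to the harmonious subgroups $\mathscr{H}_\pm(\Gamma)$ and bound the resulting additive index using the containments from \cref{thm:harmonious}. The paper's proof is terser and does not spell out your second step (that $\mathscr{H}_\pm(\Gamma)$ are themselves lattices, which is needed to invoke \cref{prop:harmonious_indices}); your verification of this via the sandwich between full-rank additive lattices is a welcome addition rather than a deviation.
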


\begin{proof} 
We may assume without loss of generality that $\Gamma$ is a lattice, replacing $G$ by $\mathscr{C}(\Gamma)$  otherwise. Since $\log \mathscr{H}_+(\Gamma) \subseteq C_1C_2 \cdot \log \mathscr{H}_-(\Gamma)$ we have that $[\log \mathscr{H}_+(\Gamma):\log \mathscr{H}_-(\Gamma)]\leq (C_1C_2)^d$, and the claim follows from \cref{prop:harmonious_indices}.
\end{proof}

\section{Proof of the main theorems}
\label{sec:wrap_up}

In this section we prove our main theorems, \cref{thm:main,thm:main_generalized}. We begin by proving the following proposition, which is a direct analogue of \cref{thm:Abelian_convex} for Carnot groups. 

\begin{proposition}
\label{prop:exploring_Carnot_subgroup}
Let $G$ be a Carnot group with Lie algebra $\mathfrak{g}$, let $d_G$ be a left-invariant homogeneous metric on $G$, and for each $r>0$ let $B_r$ be the ball of radius $r$ around $\mathrm{id}$ in $(G,d_G)$. Then
\[
\sup\left\{\#\{\langle H \cap B_{2^k} \rangle : k \in \Z\} : H \text{ a discrete subgroup of $G$}\right\} <\infty.
\]
\end{proposition}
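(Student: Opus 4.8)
The plan is to reduce the statement, via the harmonious-subgroup machinery of \cref{sec:nilpotent_lattices}, to the Euclidean statement \cref{thm:Abelian_convex}. Fix a discrete subgroup $H\leq G$ and for $k\in\Z$ write $A_k=H\cap B_{2^k}$ and $H_k=\langle A_k\rangle$, so that $(H_k)_{k\in\Z}$ is an increasing chain. A discrete subgroup of a simply connected nilpotent Lie group is a lattice in its connected hull $\mathscr{C}(H)$, hence finitely generated, so $H_k=\{\mathrm{id}\}$ for $k$ sufficiently negative and $H_k=H$ for $k$ sufficiently positive; thus the chain has finitely many distinct terms and it suffices to bound this number uniformly in $H$ (constants may depend on $G$ and $d_G$, hence on the step $s$, the dimension $d$, and the ball $B_1$, since these are fixed). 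First I would transfer the problem to the Lie algebra: applying \cref{thm:harmonious} and \cref{cor:harmonious_bounded_index} to each $H_k$ (with $G$ replaced by $\mathscr{C}(H_k)$), the hulls $\mathscr{H}_-(H_k)\subseteq H_k\subseteq\mathscr{H}_+(H_k)$ have index at most $M:=(C_1C_2)^d$ and depend on $H_k$ only through the additive subgroup $\spanZ\log H_k$; hence the map $H_k\mapsto\spanZ\log H_k$ has fibres of size at most $2^M$, and it is enough to bound $\#\{\spanZ\log H_k:k\in\Z\}$.

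Next I would introduce convex ``boxes'' adapted to the stratification: set $F_\lambda=\{X=\sum_i X_i\in\bigoplus_i V_i:\|X_i\|\leq\lambda^i\}$, so that each $F_\lambda$ is convex and symmetric, $F_\lambda=\delta_\lambda(F_1)$ is increasing in $\lambda$, and --- crucially using that $G$ is Carnot, so $[V_i,V_j]\subseteq V_{i+j}$ --- there is a constant $c'$ with $\mathscr{B}(F_\lambda)\subseteq F_{c'\lambda}$. Since $\log B_1$ is compact with $0$ in its interior, there are $0<a<b$ with $F_a\subseteq\log B_1\subseteq F_b$, and applying $\delta_{2^k}$ gives $F_{a2^k}\subseteq\log B_{2^k}\subseteq F_{b2^k}$. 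Let $\Lambda:=\log\mathscr{H}_-(H)=C_1\spanZ\log H$, a bracket-closed lattice contained in $\log H$. On the one hand $\Lambda\cap F_{a2^k}\subseteq\log H\cap\log B_{2^k}=\log A_k\subseteq\log H_k$, so $\spanZ(\Lambda\cap F_{a2^k})\subseteq\spanZ\log H_k$. On the other hand, since $H_k=\langle A_k\rangle$ is contained in the harmonious subgroup $\exp\bigl(C_1\spanZ\mathscr{B}(\tfrac1{C_1}\spanZ\log A_k)\bigr)$ (by \cref{lem:additive_to_harmonious}, using that $C_1$ is a multiple of the constant there), combining $\log A_k\subseteq\log B_{2^k}\subseteq F_{b2^k}$ with the inflation bound $\mathscr{B}(F_\lambda)\subseteq F_{c'\lambda}$, bracket-closedness of $\Lambda$, and bounded-index comparisons between $\Lambda$ and the commensurable lattice $\tfrac1{C_1}\Lambda$, I would derive $m''\cdot\spanZ\log H_k\subseteq\spanZ(\Lambda\cap F_{c''2^k})$ for constants $m''$ and $c''$. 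Writing $P_k:=\spanZ(\Lambda\cap F_{a2^k})$ and $t:=\lceil\log_2(c''/a)\rceil$, this gives $P_k\subseteq\spanZ\log H_k\subseteq\tfrac1{m''}P_{k+t}$.

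To finish, \cref{thm:Abelian_convex} applied to the lattice $\Lambda$ with the increasing convex family $(F_{a2^k})_k$ bounds $\#\{P_k:k\in\Z\}$ by a constant depending only on $d$. On any maximal run of consecutive integers $k$ on which $P_k$ equals a fixed value $P$, every $k$ at distance more than $t$ from the right end of the run satisfies $P\subseteq\spanZ\log H_k\subseteq\tfrac1{m''}P$, and $[\tfrac1{m''}P:P]\leq(m'')^d$, so $\spanZ\log H_k$ is one of at most $2^{(m'')^d}$ subgroups; each run therefore contributes at most $2^{(m'')^d}+t$ distinct values, and there are at most $\#\{P_k\}$ runs. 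Hence $\#\{\spanZ\log H_k:k\}$ is bounded, and so is $\#\{H_k:k\}$.

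I expect the main obstacle to be the two-sided comparison in the middle step: one must simultaneously tame (i) the gap between multiplicative generation $\langle A_k\rangle$ and additive span/bracket-closure --- which is why one should use the harmonious-hull description of $\langle A_k\rangle$ rather than a naive Baker--Campbell--Hausdorff expansion, whose coefficients would otherwise grow with word length --- (ii) the non-convexity of the metric balls $B_{2^k}$, handled by the sandwich $F_{a2^k}\subseteq\log B_{2^k}\subseteq F_{b2^k}$, and (iii) the discrepancy between $\log H$ and its harmonious hulls, handled by bounded-index arguments. The Carnot hypothesis is used essentially (and a reduction to $\mathscr{C}(H)$ would fail) precisely in the inflation estimate $\mathscr{B}(F_\lambda)\subseteq F_{c'\lambda}$, which relies on $[V_i,V_j]\subseteq V_{i+j}$; the scalar dilations ($C_1$, $m''$, \dots) that proliferate are then absorbed either into scale shifts $k\mapsto k+O(1)$ or into the bounded-index run argument of the last step.
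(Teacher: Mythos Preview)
Your proposal is correct and follows essentially the same approach as the paper: reduce to the Euclidean statement \cref{thm:Abelian_convex} via the harmonious-subgroup machinery, sandwiching the metric balls between dilates of a convex set, and then absorb the discrepancies by bounded-index ``run'' arguments. The bookkeeping differs slightly --- you route through the intermediate $\spanZ\log H_k$ and use an explicit inflation bound $\mathscr{B}(F_\lambda)\subseteq F_{c'\lambda}$, whereas the paper routes through $\spanZ(\log A_k)$ and absorbs the bracket-closure step into the containment $C_1\spanZ\mathscr{B}(\tfrac{1}{C_1}\log A_k)\subseteq\log\mathscr{H}_+(H_k)$ --- but these are cosmetic rearrangements of the same argument. (Your fibre bounds $2^M$ and $2^{(m'')^d}$ are wasteful, since the relevant subgroups form chains and so $\log_2 M+1$ and $d\log_2 m''+1$ suffice, but this does not affect correctness.)
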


It will suffice for our applications that all relevant constants depend on the pair $(G,d_G)$ in an arbitrary fashion. 

\begin{proof}[Proof of \cref{prop:exploring_Carnot_subgroup}]
Since $d_G$ is consistent with the usual topology of $G$, the (closed) ball $B_r$ is a compact subset of $G$ containing a neighbourhood of the identity for each $r>0$. In particular, there exists a convex, symmetric subset $K$ of $\mathfrak{g}$ and a constant $C_0 \geq 1$ such that $K \subseteq \log B_1 \subseteq C_0 K \subseteq \delta_{C_0}(K)$, where $(\delta_\lambda)_{\lambda>0}$ is the dilation semigroup on the stratified Lie algebra $\mathfrak{g}$. Thus, the balls $B_r$ are sandwiched between the exponentials of the dilates of $K$:
\begin{equation}
\label{eq:convex_sandwich}
\delta_r(K) \subseteq \log B_r \subseteq \delta_{C_0r}(K)
\end{equation}
for every $r>0$.
The sets $\delta_\lambda(K)$ are all convex and symmetric since they are linear images of the convex symmetric set $K$; this will allow us to apply \cref{thm:Abelian_convex} to an appropriately chosen additive subgroup of $\mathfrak{g}$.

\medskip



Let $\Lambda = \log H$, and for each $k\in \Z$ let $H_k = \langle H \cap B_{2^k} \rangle$ and $\Lambda_k = \log H_k$. By \cref{thm:harmonious}, there exists an integer constant $C_1$ such that $\tilde \Lambda := \operatorname{span}_\Z(C_1\cdot \Lambda)$ is an additive, bracket-closed lattice in $\mathfrak{g}$ whose exponential $\mathscr{H}_-(H)$ is a harmonious subgroup of $G$ that is contained in $H$.
   It follows by a direct application of \cref{thm:Abelian_convex} that there exists a constant $C_2$ such that
\[
\#\{ \operatorname{span}_\Z (\tilde \Lambda \cap \delta_\lambda(K)) : \lambda >0 \} \leq C_2.
\]
For each $k\geq 0$ we have trivially that $H_k \cap B_{2^k} = H \cap B_{2^k}$ and hence that $\Lambda_k \cap \delta_{2^k}(K)=\Lambda \cap \delta_{2^k}(K)$. In particular, 
\[\tilde \Lambda \cap \delta_{2^k}(K) \subseteq \tilde \Lambda \cap B_{2^k} \subseteq  \Lambda \cap B_{2^k} = \Lambda_k \cap B_{2^k}.\]
On the other hand, letting $m=\lceil \log_2 C_0 \rceil$, we also have by \eqref{eq:convex_sandwich} that
\[
\tilde \Lambda \cap \delta_{2^{k+m}}(K) \supseteq \tilde \Lambda \cap B_{2^k} \supseteq (C_1\cdot \Lambda) \cap B_{2^k} \supseteq C_1\cdot (\Lambda \cap B_{2^k}) = C_1\cdot (\Lambda_k \cap B_{2^k}).
\]
Thus, if we define $\tilde \Lambda_k = \operatorname{span}_\Z (\tilde \Lambda \cap \delta_{2^k}(K))$ for each $k>0$ then
\begin{equation}
\label{eq:lambda_tilde_in_the_middle}
C_1\cdot \operatorname{span}_\Z(\Lambda_{k-m} \cap B_{2^{k-m}}) \subseteq \tilde \Lambda_k \subseteq \operatorname{span}_\Z(\Lambda_k \cap B_{2^k})
\end{equation}
for every $k\in \Z$. 

\medskip

Consider an interval $[a,b]\cap \Z$ such that $\tilde \Lambda_k$ does not change as $k$ varies over $[a,b]\cap \Z$. 
Then we have by \eqref{eq:lambda_tilde_in_the_middle} that
\[
C_1\cdot \spanZ(\Lambda_{b-m} \cap B_{2^{b-m}}) \subseteq \spanZ(\Lambda_a\cap B_{2^a})
\]
and hence that
\[
[\spanZ(\Lambda_{b-m} \cap B_{2^{b-m}}):\spanZ(\Lambda_a\cap B_{2^a})]\leq C_1^d.
\]
Since strict sublattices have index at least $2$, this in turn implies that 
\[
\#\{\spanZ(\Lambda_{k} \cap B_{2^{k}}) : k \in [a,b]\} \leq m + \#\{\spanZ(\Lambda_{k} \cap B_{2^{k}}) : k \in [a,b-m]\} \leq m + d \log_2 C_1.
\]
Since $[a,b]$ was an arbitrary interval over which $\tilde \Lambda_k$ remained constant, it follows that there exist constants $C_3$ and $C_4$ such that
\[
\#\{\spanZ(\Lambda_{k} \cap B_{2^{k}}) : k \in \Z\} \leq C_3 \#\{\tilde \Lambda_k : k \in \Z\} \leq C_4.
\]
Now, for each $k$, (since the constant $C_1$ divides the constant appearing in \cref{lem:additive_to_harmonious}) the set $C_1 \cdot \spanZ(\mathscr{B}(\frac{1}{C_1} (\Lambda_k \cap B_{2^k})) \subseteq \log\mathscr{H}_+(H_k)$ is an additive, bracket-closed subgroup of $\mathfrak{g}$ whose exponential is a subgroup of $G$ that contains a generating set for $H_k$, so that
\[
\Lambda_k \subseteq C_1 \cdot \spanZ\left(\mathscr{B}\left(\frac{1}{C_1} (\Lambda_k \cap B_{2^k})\right)\right) \subseteq \log \mathscr{H}_+(H_k)
\]
for every $k$.
As such, if  $[a,b]\cap \Z$ is an interval such that $\operatorname{span}_\Z(\Lambda_k \cap B_{2^k})$ does not change as $k$ varies over $[a,b]\cap \Z$ then we have that
\[
\Lambda_a \subseteq \Lambda_b \subseteq C_1 \cdot \spanZ\left(\mathscr{B}\left(\frac{1}{C_1} (\Lambda_a \cap B_{2^a})\right)\right) \subseteq  \log \mathscr{H}_+(\Lambda_a)
\]
and hence by \cref{cor:harmonious_bounded_index} that $[H_b:H_a] \leq C_5$ for some constant $C_5$. Arguing as above, this implies that there exist constants $C_6$ and $C_7$ such that
\[
\#\{H_k :k\in \Z\} \leq C_6 \#\{\spanZ(\Lambda_{k} \cap B_{2^{k}}) : k \in \Z\}  \leq C_7,
\]
completing the proof. \qedhere
\end{proof}

Our next goal is to use \cref{prop:exploring_Carnot_subgroup} to prove the special case of \cref{thm:main_generalized} in which the group is nilpotent of bounded step.

\begin{prop}[Exploring subgroups of nilpotent groups]
\label{prop:main_nilpotent}
For each $s,k\geq 1$ there exists a constant $C(s,k)$ such that the following holds.
Let $N$ be a nilpotent group of step $s$ generated by some set $S$ with $|S|\leq k$, let $H$ be a subgroup of $N$, and for each $n\geq 1$ let $H_n$ be the subgroup of $H$ generated by elements that have word length at most $2^n$ in $(G,S)$. Then 
\[
\#\{n:H_{n+1}\neq H_n\} \leq C.
\]
\end{prop}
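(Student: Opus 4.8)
The plan is to work our way down to \cref{prop:exploring_Carnot_subgroup} in two moves: first reduce the statement from an arbitrary step-$s$ nilpotent group to the \emph{free} step-$s$ nilpotent group $N_{s,S}$, and then realise $N_{s,S}$ as a lattice in the free step-$s$ Carnot group so that \cref{prop:exploring_Carnot_subgroup} can be applied there. For the first move, let $\pi\colon N_{s,S}\to N$ be the canonical surjection restricting to the identity on $S$, put $\tilde H:=\pi^{-1}(H)$, and for $n\geq 1$ let $\tilde H_n$ be the subgroup of $\tilde H$ generated by its elements of word length at most $2^n$ in $(N_{s,S},S)$. Since $\pi$ maps $S$ onto $S$ it cannot increase word length, so $\pi(\tilde H_n)\subseteq H_n$; conversely, any element of $H$ that can be written as a word of length at most $2^n$ in $S$ lifts to such a word in $N_{s,S}$, which automatically lies in $\pi^{-1}(H)=\tilde H$, giving $H_n\subseteq\pi(\tilde H_n)$. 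Hence $\pi(\tilde H_n)=H_n$ for every $n$, so $\tilde H_{n+1}=\tilde H_n$ implies $H_{n+1}=H_n$, and it is enough to prove the proposition for $N=N_{s,S}$. From now on I assume $N=N_{s,S}$ and write $H$ for $\tilde H$.

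Next I would identify $N_{s,S}$ with the lattice $\Gamma_{s,S}$ inside the free step-$s$ Carnot group $G:=G_{s,S}$, whose stratified Lie algebra $\mathfrak g=V_1\oplus\cdots\oplus V_s$ has $\log S=S\subseteq V_1$; then $H$ is a discrete subgroup of $G$. Because $N_{s,S}$ is torsion-free with Mal'cev completion $G$, Pansu's theorem \eqref{eq:Pansu} supplies a left-invariant homogeneous metric $d_G$ on $G$ with $d_S(x,y)=(1\pm o(1))\,d_G(x,y)$ as $d_S(x,y)\to\infty$, where $d_S$ is the word metric. Writing $B_r$ for the $d_G$-ball of radius $r$ about $\mathrm{id}$, I would combine the cheap estimate $d_G\leq\bigl(\max_{t\in S}d_G(\mathrm{id},t)\bigr)\,d_S$ with the lower bound $d_S\geq(1-o(1))\,d_G$ (valid once $d_S$ is large) to produce an integer $a=a(s,|S|)$ and a threshold $n_0=n_0(s,|S|)$ with
\[
B_{2^{n-a}}\cap\Gamma_{s,S}\ \subseteq\ \{x\in\Gamma_{s,S}:d_S(\mathrm{id},x)\leq 2^n\}\ \subseteq\ B_{2^{n+a}}\cap\Gamma_{s,S}
\]
for all $n\geq n_0$. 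Extracting these honest ball containments from Pansu's asymptotic statement while keeping track of which constants depend on what is the only mildly technical point; the substantive work is already packaged in \cref{prop:exploring_Carnot_subgroup}.

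To finish, for $j\in\Z$ set $\mathcal{H}_j:=\langle H\cap B_{2^j}\rangle$, an increasing chain of subgroups of $G$. Applying \cref{prop:exploring_Carnot_subgroup} to $G_{s,S}$ and the discrete subgroup $H$ gives a constant $C'=C'(s,|S|)<\infty$ with $\#\{\mathcal{H}_j:j\in\Z\}\leq C'$, so the chain $(\mathcal{H}_j)_{j\in\Z}$ increases strictly at most $C'-1$ times. Intersecting the display above with $H$ and taking generated subgroups gives $\mathcal{H}_{n-a}\subseteq H_n\subseteq\mathcal{H}_{n+a}$ for $n\geq n_0$; therefore, whenever $n\geq n_0$ and $H_{n+1}\neq H_n$ we have $\mathcal{H}_{n-a}\subseteq H_n\subsetneq H_{n+1}\subseteq\mathcal{H}_{n+1+a}$, so the chain must increase strictly at some index lying in $\{n-a,\dots,n+a\}$. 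Since each index of strict increase is charged by at most $2a+1$ such values of $n$, this bounds $\#\{n\geq n_0:H_{n+1}\neq H_n\}$ by $(2a+1)(C'-1)$, and adding the at most $n_0$ remaining scales yields $\#\{n:H_{n+1}\neq H_n\}\leq n_0+(2a+1)(C'-1)$, a quantity depending only on $s$ and $|S|$. Taking the maximum over $1\leq|S|\leq k$ then produces the required constant $C(s,k)$.
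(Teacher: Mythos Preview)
Your proof is correct and follows essentially the same two-step strategy as the paper: reduce to the free step-$s$ nilpotent group $N_{s,S}$ via the canonical surjection, then embed $N_{s,S}$ as a lattice in the free step-$s$ Carnot group and invoke Pansu's theorem together with \cref{prop:exploring_Carnot_subgroup}. Your reduction is phrased more directly (showing $\pi(\tilde H_n)=H_n$) where the paper shows the equivalent statement $K\tilde H_n=\pi^{-1}(H_n)$, and your final counting argument is more explicit than the paper's ``the claim follows easily'', but these are cosmetic differences.
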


\begin{proof}[Proof of \cref{prop:main_nilpotent}]
We first argue that it suffices to consider the case that $N$ is equal to the free step-$s$ nilpotent group $N_{s,S}$.
Let $N$, $S$, and $(H_n)_{n\geq 0}$ be as in the statement of the theorem. Let $N_{s,S}$ be the free step-$s$ nilpotent group over $S$ and let $G_{s,S}$ be the free step-$s$ nilpotent Lie group over $S$, so that $N_{s,S}$ can be identified with the subgroup of $G_{s,S}$ generated by $S$. By the universal property of $N_{s,S}$, there exists a homomorphism $\pi:N_{s,S}\to N$ satisfying $\pi(x)=x$ for every $x\in S$, which is necessarily unique and surjective since $S$ generates $N$. Thus, $\pi$ maps the word metric $r$-ball in $(N_{s,S},S)$ to the word metric $r$-ball in $(N,S)$ for every $r\geq 0$.
For each $n \geq 0$, let $\tilde H_n$ be the subgroup of $N_{s,S}$ generated by the elements of $\pi^{-1}(H)$ that have word length at most $2^n$ in $(N_{s,S},S)$. Letting $K$ denote the kernel of $\pi$, we observe that the subgroup $K\tilde H_n=\{kh:k\in K, h\in \tilde H_n\}$ of $N_{s,S}$ is equal to the preimage $\pi^{-1}(H_n)$: On the one hand, since $K$ is normal in $K \tilde H_n$, $\pi(K \tilde H_n)$ is a subgroup of $N$ that contains the set of words in $H$ that have word length at most $2^n$, and therefore contains $H_n$. On the other hand, if $x=kh$ is an element of then we can write $h=h_1 h_2\cdots h_\ell$ as a product of elements of $\pi^{-1}(H)$ of word length at most $2^n$ in $(N_{s,S},S)$, so that $\pi(h_i)$ is an element of $H$ of word length at most $2^n$ in $(N,S)$ and $\pi(x)$ belongs to $\pi^{-1}(H_n)$ as required. 
%
%
 Since $\pi$ is surjective, we have the chain of implications
 \[
(\tilde H_{n+1}=\tilde H_n) \Rightarrow (K \tilde H_{n+1}=K\tilde H_n) \Rightarrow (H_{n+1}=H_n).
 \]
 Thus, it suffices to prove that the theorem holds with $N$ and $H$ replaced by $N_{s,S}$ and $\pi^{-1}(H)$.

\medskip

From now on we assume that $N=N_{s,S}$ and let $G=G_{s,S}$. Since $N$, $G$, and the embedding $N\to G$ are determined up to isomorphism by $s$ and $|S|$, we are now free to use constants that depend on this data in an arbitrary way (but must still be independent of the choice of subgroup $H \subseteq N$). 
By Pansu's theorem as formulated in \eqref{eq:Pansu}, there exists a left-invariant Carnot metric $d_G$ on $G$ such that 
\[
\frac{d_G(\mathrm{id},x)}{d_S(\mathrm{id},x)} \to 1 \qquad \text{ as $x\to\infty$ in $N$.}
\]
(For the argument to work we need only that the embedding of $(N,d_S)$ into $(G,d_G)$ is a quasi-isometry.)
In particular, there exist positive constants $c$ and $C$ such that
\[
\log N \cap B_{cn} \subseteq \log (\overline{S}^n) \subseteq \log N \cap B_{Cn}
\]
for every $n\geq 0$, where we write $\overline{S}=S\cup\{\mathrm{id}\}\cup S^{-1}$. This implies that if $H$ is a subgroup of $N$ then
\[
\langle H \cap B_{c2^k} \rangle \subseteq H_k \subseteq \langle H \cap B_{C2^k} \rangle
\]
for every $k \geq 0$, and the claim follows easily from this together with \cref{prop:exploring_Carnot_subgroup}. \qedhere

\end{proof}

We next deduce \cref{thm:main_generalized} from \cref{prop:main_nilpotent} and the Breuillard-Green-Tao theorem.
The proof will use the following elementary lemmas, the first of which is proven in \cite[Lemma 4.2]{MR3439705}. Recall that we write $\overline{S}=S\cup\{\mathrm{id}\}\cup S^{-1}$.

\begin{lemma}
\label{lem:generating_finite_index}
Let $G$ be a group, and let $H$ be a subgroup of $G$ with index at most $n$. If $S$ is a finite generating set for $G$, then $(\overline{S})^{2n-1} \cap H$ is a generating set for $H$.
\end{lemma}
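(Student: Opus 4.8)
The plan is to run a short Schreier-type argument built from coset representatives of small word length. Write $m=[G:H]\leq n$. Recalling $\overline{S}=S\cup\{\mathrm{id}\}\cup S^{-1}$, form the \emph{Schreier coset graph} on the right cosets $H\backslash G$, with an edge labelled $s$ from $Hg$ to $Hgs$ for each $s\in\overline{S}$; this graph is connected because $\overline{S}$ generates $G$, and it has exactly $m$ vertices. Take a breadth-first spanning tree rooted at the coset $H$. A breadth-first tree on $m$ vertices has depth at most $m-1$, so reading off, for each coset $c$, the word that spells the tree-path from $H$ to $c$ yields representatives $t_1=\mathrm{id},t_2,\dots,t_m\in G$ with $\{Ht_i\}$ equal to the set of all right cosets and $|t_i|_{\overline{S}}\leq m-1\leq n-1$ for every $i$. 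For $g\in G$ write $\bar g$ for the chosen representative of $Hg$, so that $|\bar g|_{\overline S}\le n-1$ always.

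Next I would expand an arbitrary $h\in H$ in terms of these representatives. Write $h=s_1s_2\cdots s_\ell$ with each $s_j\in\overline{S}$, put $u_j=s_1\cdots s_j$ for $0\le j\le\ell$ (so $u_0=\mathrm{id}$ and $u_\ell=h\in H$), and telescope:
\[
h=\prod_{j=1}^{\ell}\bigl(\overline{u_{j-1}}\,s_j\,\overline{u_j}^{\,-1}\bigr),
\]
which is valid since $\overline{u_0}=\overline{u_\ell}=\mathrm{id}$ because $u_0,u_\ell\in H$. Each factor $\overline{u_{j-1}}\,s_j\,\overline{u_j}^{\,-1}$ lies in $H$: from $u_{j-1}s_j=u_j$ we get $H\overline{u_{j-1}}s_j=Hu_j=H\overline{u_j}$. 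Moreover it has $\overline{S}$-word length at most $(n-1)+1+(n-1)=2n-1$. Hence every $h\in H$ is a product of elements of $(\overline{S})^{2n-1}\cap H$, so this set generates $H$; the reverse inclusion $\langle(\overline{S})^{2n-1}\cap H\rangle\subseteq H$ is immediate.

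I do not expect a real obstacle here: the only points needing (routine) care are the observation that a breadth-first tree on $m\le n$ vertices has depth at most $m-1$, which is what keeps the representative lengths below $n$, and the bookkeeping that pins the final bound at exactly $2n-1$. An alternative presentation would invoke Schreier's lemma together with the standard length bound on coset representatives, but the self-contained telescoping identity above seems cleanest for the intended readership.
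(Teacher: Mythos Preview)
Your argument is correct and is the standard Schreier-type proof; the paper does not actually give its own proof of this lemma but instead cites \cite[Lemma 4.2]{MR3439705}, so there is nothing to compare against beyond noting that your self-contained telescoping version is exactly the usual approach.
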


\begin{lemma}
\label{lem:finite_index_subgroup_chain}
Let $G$ be a group, and let $H_1 \subseteq H_2 \subseteq \cdots \subseteq H_n$ and $H_1' \subseteq H_2' \subseteq \cdots \subseteq H_n'$ be two chains of subgroups such that $H'_i$ is a subgroup of $H_i$ for each $1\leq i \leq n$. Then
\[
\#\{H_i:1\leq i \leq n\} \leq (1+\lfloor \log_2 (\max_i [H_i:H_i'])\rfloor) \cdot \#\{H_{i}' : 1 \leq i \leq n\}.
\]
\end{lemma}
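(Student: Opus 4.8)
\textbf{Proof proposal for \cref{lem:finite_index_subgroup_chain}.}

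The plan is to bound the number of distinct groups appearing in the chain $H_1 \subseteq H_2 \subseteq \cdots \subseteq H_n$ by a ``telescoping index'' argument, using the companion chain $H_1' \subseteq \cdots \subseteq H_n'$ as a reference frame whose number of distinct terms we already know how to count. First I would record the elementary fact that if $A \subseteq B$ are groups with $B$ strictly larger, then $[B:A] \geq 2$, so along any chain of strictly increasing subgroups the index multiplies by at least $2$ at each step. The subtlety is that the $H_i$ need not have finite index in each other a priori; but the hypothesis $[H_i : H_i'] \leq M := \max_i [H_i : H_i']$ controls how far each $H_i$ can sit above its reference term $H_i'$.

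The key step is the following observation: fix a maximal run of consecutive indices $[a,b] \cap \Z$ on which the reference chain is constant, say $H_a' = H_{a+1}' = \cdots = H_b' =: L$. On this run we have $L = H_a' \subseteq H_a \subseteq H_{a+1} \subseteq \cdots \subseteq H_b$, and since $H_b' = L$ with $[H_b : H_b'] \leq M$, every $H_k$ for $k \in [a,b]$ satisfies $L \subseteq H_k \subseteq H_b$ and $[H_b : L] \leq M$. Hence the groups $\{H_k : k \in [a,b]\}$ form a chain of subgroups sandwiched between $L$ and $H_b$, a group of index at most $M$ over $L$; since each strict increase multiplies the index by at least $2$, there can be at most $1 + \lfloor \log_2 M \rfloor$ distinct groups among $\{H_k : k \in [a,b]\}$.

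Summing over all maximal runs then finishes the argument: the chain $\{1,\ldots,n\}$ partitions into $\#\{H_i' : 1 \leq i \leq n\}$ maximal runs on which $H_i'$ is constant (since $H_i'$ is non-decreasing, the number of maximal constant runs equals the number of distinct values), and on each run we have just bounded the number of distinct $H_k$ by $1 + \lfloor \log_2 M \rfloor$. Therefore
\[
\#\{H_i : 1 \leq i \leq n\} \leq (1 + \lfloor \log_2 M \rfloor) \cdot \#\{H_i' : 1 \leq i \leq n\},
\]
which is exactly the claimed bound. The only point requiring a little care — the ``main obstacle'', though it is minor — is justifying that distinct $H_k$'s on a single run really do have finite index $\geq 2$ over one another so that the $\log_2$ counting is valid; this follows because they are all squeezed between $L$ and $H_b$ with $[H_b:L] \leq M < \infty$, so all indices in sight are finite and the multiplicativity of index over the chain $L \subseteq \cdots \subseteq H_b$ applies.
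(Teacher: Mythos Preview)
Your proposal is correct and follows essentially the same approach as the paper: both arguments partition $\{1,\ldots,n\}$ into the maximal runs on which $H_i'$ is constant, observe that on each such run $[a,b]$ the chain $H_a\subseteq\cdots\subseteq H_b$ sits between $H_a'=H_b'$ and $H_b$ with $[H_b:H_b']\leq M$, and then use that each strict inclusion contributes a factor of at least $2$ to the index to bound the number of distinct $H_k$'s on the run by $1+\lfloor\log_2 M\rfloor$. The only cosmetic difference is that the paper first passes to a subsequence so that all $H_i$ are distinct and then bounds $n$ directly, whereas you bound the number of distinct $H_k$'s on each run without this reduction.
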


\begin{proof}[Proof of \cref{lem:finite_index_subgroup_chain}]
By taking a subsequence if necessary, we may assume that $H_{i+1}\neq H_i$ for every $1\leq i < n$. 
Let $\ell=\#\{H_i' : 1 \leq i \leq n \}$ and for each $0\leq k<\ell$ let $i_k$ be the $k$th time $H_i'$ changes, so that $i_0=1$ and $i_k=\min \{i>i_k : H_i' \neq H_{i_k}'\}$ for each $1 \leq k < \ell$. We also set $i_\ell=n+1$ for notational convenience. Since $H_{i_{k-1}}'=H_{i_k-1}'$ is a subgroup of $H_{i_{k-1}}$ for each $1\leq k \leq \ell$, we have that
\[
[H_{i_k-1}:H_{i_{k-1}}] \leq [H_{i_k-1}:H_{i_k-1}'] \leq \max_i [H_i:H_i']
\]
for every $1\leq k \leq \ell$.
On the other hand, we also have that
$[H_{i_k-1}:H_{i_{k-1}}] = \prod_{i=i_{k-1}}^{i_k-2} [H_{i+1}:H_i] \geq 2^{i_k-1-i_{k-1}}$ for every $1\leq k \leq \ell$,
and hence that
\[
n = \sum_{k=1}^\ell (i_k-i_{k-1}) \leq \ell \cdot (1+\lfloor \log_2 (\max_i [H_i:H_i']) \rfloor)
\]
as claimed.
\end{proof}

\begin{corollary}
\label{cor:finite_index_subgroup_chain}
Let $G$ be a group, let $G'$ be a finite-index subgroup of $G$, and let $H_1 \subseteq H_2 \subseteq \cdots \subseteq H_n$ be an increasing sequence of subgroups of $G$. Then
\[
\#\{H_i:1\leq i \leq n\}  \leq (1+\lfloor \log_2[G:G'] \rfloor) \cdot \#\{H_i \cap G':1\leq i \leq n\} .
\]
\end{corollary}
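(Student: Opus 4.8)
The plan is to apply \cref{lem:finite_index_subgroup_chain} directly, taking $H_i' := H_i \cap G'$ for each $1 \leq i \leq n$. First I would observe that $H_1' \subseteq H_2' \subseteq \cdots \subseteq H_n'$ is indeed an increasing chain of subgroups (intersection of two subgroups is a subgroup, and intersecting an increasing chain with a fixed set preserves the inclusions), and that $H_i'$ is a subgroup of $H_i$ by construction, so the hypotheses of \cref{lem:finite_index_subgroup_chain} are met.

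The only nontrivial input is the elementary index bound $[H_i : H_i \cap G'] \leq [G : G']$, valid for every $i$. This is the standard fact that the assignment $h(H_i \cap G') \mapsto hG'$ gives a well-defined injection from the left coset space $H_i/(H_i \cap G')$ into $G/G'$: if $h_1 G' = h_2 G'$ with $h_1, h_2 \in H_i$ then $h_2^{-1} h_1 \in G' \cap H_i$, so $h_1(H_i \cap G') = h_2(H_i \cap G')$. Hence $\max_i [H_i : H_i'] \leq [G:G'] < \infty$.

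Plugging $H_i' = H_i \cap G'$ into the conclusion of \cref{lem:finite_index_subgroup_chain} then gives
\[
\#\{H_i : 1 \leq i \leq n\} \leq \bigl(1 + \lfloor \log_2(\max_i [H_i : H_i']) \rfloor\bigr) \cdot \#\{H_i \cap G' : 1 \leq i \leq n\} \leq \bigl(1 + \lfloor \log_2[G:G'] \rfloor\bigr) \cdot \#\{H_i \cap G' : 1 \leq i \leq n\},
\]
using monotonicity of $x \mapsto \lfloor \log_2 x \rfloor$ for the last step. This is exactly the claimed inequality, so there is no real obstacle here — the statement is a one-line consequence of \cref{lem:finite_index_subgroup_chain} once the coset-injection bound is recorded.
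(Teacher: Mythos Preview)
Your proposal is correct and follows exactly the same route as the paper: apply \cref{lem:finite_index_subgroup_chain} with $H_i' = H_i \cap G'$ and use the standard bound $[H_i : H_i \cap G'] \leq [G:G']$. The paper's proof is just this one line, so your argument is essentially a slightly more detailed version of it.
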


\begin{proof}
Apply \cref{lem:finite_index_subgroup_chain} with $H_i'=H_i \cap G'$ and use that $[H_i:H_i \cap G']\leq [G:G']$.
\end{proof}

We now have everything we need to prove \cref{thm:main_generalized}.

\begin{proof}[Proof of \cref{thm:main_generalized}]
Let $K\geq 1$ and let $r_0=r_0(K)$ and $C_1=C_1(K)$ be as in \cref{thm:BGT_metric}. Suppose that $r\geq r_0$ is such that $\Gr(3r)\leq K \Gr(r)$, and let $Q \triangleleft G$ and $N \triangleleft G/Q$ be as in \cref{thm:BGT_metric}.
Since $N$ has index at most $C_1$ in $G/Q$, \cref{lem:generating_finite_index} implies that the set $S':=\pi((\overline{S})^{2C_1-1}) \cap N$ is a generating set for $N$, and the word metric associated to the pair $(N,S')$ is bi-Lipschitz equivalent to the restriction of the word metric on $(G/Q,\pi(S))$ to $N$, with constants depending only on $K$.

\medskip

Let $H'=(QH)/Q$, so that $H'$ is a subgroup of $G/Q$ and $\tilde H := H' \cap N$ is a subgroup of $N$. For each $n\geq 0$ let $W'_n$ and $\tilde W_n$ be the set of the elements of $H'$ and $\tilde H$, respectively, that have word length at most $2^n$ with respect to $(G/Q,\pi(S))$, and define $H'_n= \langle W'_n \rangle$ and $\tilde H_n=\langle \tilde W_n \rangle$ for every $n\geq 0$. Since $[H'_n:H'_n \cap N]\leq [G/Q:N]\leq C_1$ and $W'_n$ is a finite symmetric generating set for $H'_n$, the set $(W'_n)^{2C_1-1}\cap N$ is a generating set for $H'_n \cap N$, so that if we define $m=\lceil \log_2 (2C_1-1) \rceil$ then
\begin{equation}
\label{eq:H'H_containment}
H'_n \cap N \subseteq \tilde H_{n+m} \subseteq H'_{n+m} \cap N
\end{equation}
for every $n\geq 0$.

\medskip

   Using the above mentioned bi-Lipschitz equivalence between the two different word metrics on $N$ and the fact that the step of $N$ and the size of the generating set $S'$ are bounded by constants depending only on $K$ and $k$, it follows from \cref{prop:main_nilpotent} that there exists a constant $C_2=C_2(K,k)$ such that
$
\#\{\tilde H_n : n \geq 0 \} \leq C_2.
$
It follows from this and \eqref{eq:H'H_containment} that there 
exists a constant $C_3=C_3(K,k)$ such that
$
\#\{N\cap H_n' : n \geq 0 \} \leq C_3,
$
and hence by \cref{cor:finite_index_subgroup_chain} that there exists a constant $C_4=C_4(K,k)$ such that
$
\#\{ H_n' : n \geq 0 \} \leq C_4.
$

\medskip

Now, as in the proof of \cref{prop:main_nilpotent}, we have that $QH_n = \pi^{-1}(H'_n)$ for every $n\geq 0$.
Observe that if $QH_{n+1} = QH_n$ but $H_{n+1}\neq H_n$ then there exist $q_n,q_{n+1}\in Q$, $h_n \in H_n$, and $h_{n+1}\in H_{n+1}\setminus  H_n$ such that $q_nh_n=q_{n+1} h_{n+1}$, and hence that $h_{n+1}h_n^{-1}=q_{n+1}^{-1}q_n \in Q$. Since $Q$ has diameter at most $C_1r$, this implies that $h_{n+1}h_n^{-1}$ has word length at most $C_1r$, contradicting the assumption that $h_{n+1}\notin H_n$ if $2^n \geq C_1 r$. 
It follows that there exists a constant $C_5=C_5(K)$ such that
\[
\#\{ n \geq C_5 + \log_2 r:H_{n+1} \neq H_n \} \leq \#\{ n \geq C_5 + \log_2 r: Q H_{n+1} \neq QH_n \} \leq \#\{ H_n' : n \geq 0 \} \leq C_4,
\]
which easily implies the claim.
\end{proof}

It remains only to deduce \cref{thm:main} from \cref{thm:main_generalized}. We will need the following lemma about the injectivity radius of the quotient $F_S / \llangle R_n\rrangle \to G$. For each $n\geq 0$ let $G_n = F_S / \llangle R_n \rrangle$.

\begin{lem} \label{lem:small_presentation_means_balls_look_similar}
      Let $G$ be a group with a finite generating set $S$.
       The quotient map $G_{n}\to G$ induces a map between Cayley graphs that restricts to an isomorphism between the balls of radius $2^{n-1}-1$ around the identity.
\end{lem}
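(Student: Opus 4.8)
The plan is to argue directly with word lengths, using only the defining property of $R_n$. Since every word in $R_n$ represents the identity of $G$, the normal closure $\llangle R_n\rrangle$ is contained in the kernel of the canonical map $F_S\to G$, so this map descends to a surjective homomorphism $\Phi\colon G_n\to G$ fixing $S$ pointwise; as $\Phi$ maps $S$ into $S$ it induces a graph homomorphism $\Cay(G_n,S)\to\Cay(G,S)$ on the associated Cayley graphs. The one fact I would extract from the construction is the following, which is immediate from the definition of $R_n$: if $w\in F_S$ has length at most $2^n$ and $w=\mathrm{id}$ in $G$, then $w\in R_n\subseteq\llangle R_n\rrangle$, so $w=\mathrm{id}$ in $G_n$; equivalently, every nontrivial element of $\ker\Phi$ has word length strictly larger than $2^n$ in $(G_n,S)$. (The statement is vacuous when $n=0$, so I may assume $n\geq 1$.)

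Writing $r=2^{n-1}-1$, the first step is to show $\Phi$ is injective on the ball $B$ of radius $r$ about the identity in $\Cay(G_n,S)$: if $u,v\in B$ with $\Phi(u)=\Phi(v)$ then $u^{-1}v$ lies in $\ker\Phi$ and, by the triangle inequality and left-invariance, has word length at most $2r=2^n-2<2^n$ in $(G_n,S)$, so the fact above forces $u=v$. The second step is surjectivity onto the corresponding ball in $\Cay(G,S)$, which is clear: any $g$ within distance $r$ of the identity in $\Cay(G,S)$ is a product of at most $r$ elements of $S\cup S^{-1}$, and lifting this expression to $G_n$ yields a preimage of $g$ lying in $B$. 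Together these give a bijection of vertex sets $B\to B_r^G$.

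The third and final step is to match up edges. Since $\Phi$ is a graph homomorphism it carries edges of $B$ to edges of the target ball, so I only need the reverse implication: if $u,v\in B$ and $\Phi(u)$, $\Phi(v)$ are adjacent in $\Cay(G,S)$, then $u$, $v$ are adjacent in $\Cay(G_n,S)$. Adjacency of $\Phi(u)$ and $\Phi(v)$ means $\Phi(v)=\Phi(u)s$ for some $s\in S\cup S^{-1}$; then $us\in G_n$ maps to $\Phi(v)$, so $(us)^{-1}v\in\ker\Phi$ and has word length at most $1+2r=2^n-1<2^n$ in $(G_n,S)$, whence $us=v$ in $G_n$ and $u,v$ are adjacent as desired. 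This gives the claimed graph isomorphism between the two balls.

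I do not anticipate a genuine obstacle here; the only point requiring care is the bookkeeping of radii. The factor of $2$ lost to the triangle inequality in the first and third steps, combined with the additional $+1$ needed to transport an edge in the third step, is precisely what reduces the injectivity radius from $2^{n-1}$ to $2^{n-1}-1$, so one should not hope to do better by this argument.
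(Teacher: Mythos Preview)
Your proof is correct and follows essentially the same approach as the paper: both hinge on the observation that any element of $\ker\Phi$ must have word length strictly greater than $2^n$, since a shorter representing word would lie in $R_n$ by definition. The paper packages the argument slightly more compactly by proving injectivity of $\Phi$ on the ball of radius $2^{n-1}$ (rather than $2^{n-1}-1$) and noting that this single fact already forces the graph isomorphism on the smaller ball, whereas you separate the vertex-bijection and edge-bijection steps; but this is a cosmetic difference, not a substantive one.
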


\begin{proof}[Proof of \cref{lem:small_presentation_means_balls_look_similar}]
      %
      It suffices to prove that the quotient 
      map $\pi:G_{n} \to G$ is injective on $(\overline{S})^{2^{n-1}}$. (This implies that $\pi(x s)=\pi(y)$ if and only if $xs=y$ for every $s\in \overline{S}$ and $x,y$ in the ball of radius $2^{n-1}-1$ and hence that the balls of radius $2^{n-1}-1$ are isomorphic.)
      Suppose for contradiction that this is false. Then there exist $u,v \in (\overline{S})^{2^{n-1}} \subseteq F_S$ such that
      $u^{-1} v \in R \setminus \llangle R_{n} \rrangle$.
Since $u$ and $v$ both belong to $(\overline{S})^{2^{n-1}}$, the product   $u^{-1}v$ belongs to $(\overline{S})^{2^{n}}$, and since it also belongs to $R$ it must belong to $R_{n}$ by definition of $R_{n}$. This contradicts the assumption that $u^{-1}v \notin \llangle R_{n} \rrangle$.
\end{proof}

\begin{proof}[Proof of \cref{thm:main}]
Let $r_0=r_0(K)$ and $C=C(K,k)$ be the constants from \cref{thm:main_generalized}.
Let $n_0=\lceil 4+\log_2 r \rceil$ and let $G'=F_S/\llangle R_{n_0} \rrangle$. The projection $G' \to G$ induces a surjective graph homomorphism between the Cayley graphs $\Cay(G',S)$ and $\Cay(G,S)$ that restricts to an isomorphism between the balls of radius $2^{n_0-1}-1\geq 4r$. Let $H$ be the subgroup of $G'$ generated by $\bigcup_{n\geq n_0} (R_n/\llangle R_{n_0} \rrangle)$ and, for each $n\geq n_0$, let $H_n$ be the subgroup of $H$ generated by $R_n/\llangle R_0 \rrangle$. If $r\geq r_0$, we may apply \cref{thm:main_generalized} to $G'$ and $H$ to obtain that
$\#\{H_n : n \geq n_0\} \leq C$, and it follows that 
\[
\#\{\llangle R_n \rrangle : n \geq n_0\}=\#\{\llangle H_n \rrangle : n \geq n_0\} \leq \#\{H_n : n \geq n_0\} \leq C
\]
as claimed.
\end{proof}





\section*{Acknowledgments} 
TH thanks Christian Gorski and Mikolaj Fraczyk for helpful conversations and thanks Matthew Tointon and 
Seung-Yeon Ryoo for comments on a draft. This work was supported by NSF grant DMS-2246494. Both authors thank the anonymous referees for their comments. 

\bibliographystyle{amsplain}


\begin{dajauthors}
\begin{authorinfo}[philip]
  Philip Easo\\
  The Division of Physics, Mathematics and Astronomy\\
  California Institute of Technology\\
  Pasadena, California, USA\\
  peaso\imageat{}caltech\imagedot{}edu \\
  \url{https://philipeaso.com/}
\end{authorinfo}
\begin{authorinfo}[johan]
  Tom Hutchcroft\\
  The Division of Physics, Mathematics and Astronomy\\
  California Institute of Technology\\
  Pasadena, California, USA\\
  t.hutchcroft\imageat{}caltech\imagedot{}edu \\
  \url{https://www.its.caltech.edu/~thutch/}
\end{authorinfo}
\end{dajauthors}

\end{document}